\numberwithin{equation}{section}
\newif\ifdraft\drafttrue
\newcommand {\pd}{\Phi_\Delta}
\newcommand\eq[2]{{\ifdraft{\ \tt [#1]}\else\ignorespaces\fi}\begin{equation}\label{eq:
#1}{#2}\end{equation}}
\newcommand {\equ}[1] {\eqref{eq: #1}}
\newcommand {\comm}[1] {\textcolor{orange}{#1}}
\newcommand {\wmn}{{W}_{m,n}}
\newcommand {\dmn}{{D}_{m,n}}
\newcommand {\admn}{\widehat{D}_{m,n}}
\newcommand{\Q}{{\mathbb {Q}}}
\newcommand{\R}{{\mathbb{R}}}
\newcommand{\Z}{{\mathbb{Z}}}
\newcommand{\re}{\operatorname{Re}}
\newcommand{\im}{\operatorname{Im}}
\newcommand{\N}{{\mathbb{N}}}
\newcommand{\vv}{{\bf{v}}}
\newcommand{\vb}{{\bf{b}}}
\newcommand{\y}{{\bf{y}}}
\newcommand{\vw}{{\bf{w}}}
\newcommand{\SL}{\operatorname{SL}}
\newcommand{\ASL}{\operatorname{ASL}}
\newcommand{\ggm}{G/\Gamma}
\newcommand{\SO}{\operatorname{SO}}
\newcommand{\diag}{{\operatorname{diag}}}
\newcommand{\df}{{\, \stackrel{\mathrm{def}}{=}\, }}
\newcommand{\x}{{\bf x}}
\newcommand{\vp}{{\bf p}}
\newcommand{\vq}{{\mathbf{q}}}
\newcommand{\T}{{\mathbb{T}}}
\newcommand{\vre}{\varepsilon}
\newcommand{\nz}{\smallsetminus\{0\}}
\newcommand\hd{Hausdorff dimension}
\newcommand\da{Diophantine approximation}
\newcommand\di{Diophantine}
\newcommand\hs{homogeneous space}
\newcommand{\ca}{{ A}}
\newcommand\dt{Dirichlet's Theorem}
\newcommand\mr{M_{m,n}%(\br)
}
\newcommand\amr{$A\in M_{m,n}%(\br)
$}
\newcommand {\ignore}[1] {}
\newcommand\ssm{\smallsetminus}
\newtheorem{thm}{Theorem}[section]
\newtheorem{lem}[thm]{Lemma}
\newtheorem{que}[thm]{Question}
\newtheorem{prop}[thm]{Proposition}
\newtheorem{cor}[thm]{Corollary}
\newtheorem{remark}[thm]{Remark}
\newtheorem{defn}[thm]{Definition}
\begin{document}

\title[An Inhomogeneous Dirichlet Theorem]{An Inhomogeneous Dirichlet Theorem \\ via Shrinking Targets %in the space of grids
}
\author{Dmitry Kleinbock}
\email{kleinboc@brandeis.edu}
\address{Brandeis University, Waltham MA, USA, 
02454-9110}
\author{Nick Wadleigh}
\email{wadleigh@brandeis.edu}
\address{Brandeis University, Waltham MA, USA, 
02454-9110}
%
%\dedication{A dedication can be included here.}
\classification{11J20 (primary), 11J13, 37A17 (secondary).}
\keywords{Dirichlet's theorem, inhomogeneous Diophantine approximation, space of grids, shrinking targets, exponential mixing}
\thanks{The first-named author was supported by NSF grants DMS-1101320 and DMS-1600814.}

\begin{abstract}
We give an integrability criterion on a real-valued non-increasing function $\psi$ guaranteeing that for almost all (or almost no) pairs $(A, \textbf{b})$, {where} $A$ {is} a {real $m\times n$ matrix} {and} ${\bf b} \in \R^m$, the system $$\|A \vq+{\bf b}-\vp\|^m< \psi({T})\hspace{10mm}\|\vq\|^n<{T}$$ is solvable in $\vp \in \Z^m$, $\vq \in \Z^n$ for all sufficiently large ${T}$. {The proof consists of a reduction to a shrinking target problem on the space of {grids} in $\R^{m+n}$.}  {We also comment on the homogeneous counterpart to this problem, whose $m=n=1$ case was recently solved, but whose general case remains open.}\end{abstract}

\maketitle

%\vspace*{6pt}\tableofcontents  % for this guide only.
% A table of contents should normally not be included

\section{Introduction and motivation}

\subsection{Homogeneous Diophantine approximation}   
\noindent Fix positive integers $m,n$. Let $M_{m,n}$ denote the space of real $m\times n$ matrices. The starting point for the present paper is the following theorem, proved by Dirichlet in 1842:

\begin{thm}[Dirichlet's Theorem]\label{dirichlet theorem} For any $A\in M_{m,n}$ and ${T}>1$, there exist $\vp\in \mathbb{Z}^m$, $\vq\in \mathbb{Z}^n\nz$ such that
\eq{dt}{ \|A\vq - \vp\|^m
\le \frac 1{{T}}
\ \ \ \mathrm{and}
\ \ \|\vq\|^n
< {T}.}
\end{thm}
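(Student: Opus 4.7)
My plan is to reduce the theorem to an application of Minkowski's first theorem in the geometry of numbers. The starting point is to note that the data of $A$ determines a unimodular lattice
$$\Lambda_A = \{(A\vq - \vp, \vq) : \vp \in \Z^m, \vq \in \Z^n\} \subset \R^{m+n},$$
obtained as the image of $\Z^{m+n}$ under a matrix of determinant $\pm 1$. Finding $\vp, \vq$ satisfying \equ{dt} is then equivalent to exhibiting a nonzero point of $\Lambda_A$ in the closed symmetric convex body
$$C_T = \{(x,y) \in \R^m \times \R^n : \|x\|^m \le 1/T, \ \|y\|^n \le T\},$$
which has sup-norm volume $(2T^{-1/m})^m (2T^{1/n})^n = 2^{m+n}$, precisely at the Minkowski threshold.

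Applying Minkowski's theorem then produces a nonzero $(A\vq - \vp, \vq) \in \Lambda_A \cap C_T$. The requirement $\vq \neq 0$ is automatic: were $\vq = 0$, we would have $\vp \in \Z^m$ with $\|\vp\|^m \le 1/T < 1$ for $T > 1$, forcing $\vp = 0$ and contradicting nontriviality of the lattice point.

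The one subtlety, which I expect to be the only genuine technical point, is to promote the non-strict inequality $\|\vq\|^n \le T$ supplied by Minkowski to the strict inequality $\|\vq\|^n < T$ demanded by the theorem. My plan is to apply Minkowski instead to the deformed body
$$\{(x, y) : \|x\|^m \le 1/(T(1-\varepsilon)^n), \ \|y\|^n \le T(1-\varepsilon)^n\},$$
which has the same volume $2^{m+n}$, producing for each small $\varepsilon > 0$ a nonzero $(\vp_\varepsilon, \vq_\varepsilon)$ with $\vq_\varepsilon \neq 0$ and $\|\vq_\varepsilon\|^n < T$ automatically. Since discreteness of $\Lambda_A$ together with the uniform bound on $\|\vq_\varepsilon\|$ confines these pairs to a finite set, some $(\vp, \vq)$ must recur along a sequence $\varepsilon_k \to 0$, yielding the desired inequalities $\|A\vq - \vp\|^m \le 1/T$ and $\|\vq\|^n < T$ in the limit.
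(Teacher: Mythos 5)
The paper does not prove Dirichlet's Theorem --- it is stated as a classical 1842 result and used as the starting point of the discussion --- so there is no internal argument to compare against. Your Minkowski-based proof is correct and is essentially the standard geometry-of-numbers proof of this statement. The lattice $\Lambda_A$ is indeed unimodular (it coincides, up to replacing $\vp$ by $-\vp$, with the lattice $\Lambda_A$ the paper itself introduces later via $\left(\begin{smallmatrix} I_m & A \\ 0 & I_n \end{smallmatrix}\right)\Z^{m+n}$), $C_T$ is symmetric, convex, compact, and has volume exactly $2^{m+n}$, so the compact form of Minkowski's first theorem applies at the threshold, and your argument that $\vq\neq 0$ using $T>1$ is correct. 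The $\varepsilon$-deformation to obtain the \emph{strict} inequality $\|\vq\|^n<T$ is genuinely needed: since $\|\vq\|$ takes nonnegative integer values, the Minkowski point for $C_T$ itself can sit on the boundary $\|\vq\|^n=T$ when $T$ is a perfect $n$-th power, and one cannot simply shrink the body in the $\vq$-direction without dropping the volume below $2^{m+n}$. Your remedy is sound: each deformed body still has volume $2^{m+n}$, forces $\|\vq_\varepsilon\|^n\le T(1-\varepsilon)^n<T$ strictly, and keeps all candidate lattice points in a fixed bounded region, so the pigeonhole over the finite candidate set followed by $\varepsilon\to 0$ delivers $\|A\vq-\vp\|^m\le 1/T$ together with $\|\vq\|^n<T$. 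One small omission worth making explicit: the conclusion $\vq_\varepsilon\neq 0$ requires $T(1-\varepsilon)^n>1$ (so that the first coordinate constraint still forces $\vp_\varepsilon=0$ whenever $\vq_\varepsilon=0$), which holds for all sufficiently small $\varepsilon$ precisely because $T>1$; the argument should state that it restricts to such $\varepsilon$.
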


{Here and hereafter $\| \cdot \|$ stands for the supremum norm on $\mathbb{R}^k$, $k\in\N$}. Informally speaking, a matrix $A$ represents a vector-valued function $\vq\mapsto A\vq$, and the above theorem asserts that one can choose a not-so-large nonzero integer vector $\vq$ so that the output of that function is close to an integer vector. In case $m=n=1$ the theorem just asserts that for any real number $\alpha$ and ${T}>1$, one of the first ${T}$ multiples of $\alpha$ lies within $1/{T}$ of an integer.
Theorem \ref{dirichlet theorem} is the archetypal \textit{uniform} diophantine approximation result, so called because it guarantees a non-trivial integer solution for \textit{all} ${T}$.  A weaker form of approximation (sometimes called \textit{asymptotic} approximation, see e.g.\ \cite{W, kim}) guarantees that such a system is solvable for an \textit{unbounded} set of ${T}$.  For instance, Theorem \ref{dirichlet theorem} implies {that} \equ{dt} is solvable for {an unbounded set of} ${T}$, \textit{a fortiori}.  The following corollary, which follows trivially from this weaker statement,  is the archetypal asymptotic result:

\begin{cor}\label{dirichlet corollary} For any $A\in M_{m,n}$ there exist infinitely many $\vq\in\Z^n$
such that
%\footnote{In fact he first inequality in \equ{dt}, as well as the inequality in \equ{dc}, can be strict, but we choose to ignore it for the sake of clarity of subsequent exposition.} % \bigskip
% \bigskip
\eq{dc}{ \|A\vq - \vp\|^m {<} \frac1{\|\vq\|^n} \quad \text{for some }\vp\in\Z^m.}
\end{cor}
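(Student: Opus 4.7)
The plan is to derive the corollary directly from Theorem \ref{dirichlet theorem} by applying it at an unbounded sequence of parameters $T$ and then arguing that infinitely many distinct integer vectors $\vq$ must occur among the solutions.

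First I would apply Dirichlet's Theorem: for each $T > 1$, pick $\vp_T \in \Z^m$ and $\vq_T \in \Z^n \nz$ with $\|A\vq_T - \vp_T\|^m \le 1/T$ and $\|\vq_T\|^n < T$. Combining these two inequalities yields
\[
\|A\vq_T - \vp_T\|^m \le \frac{1}{T} < \frac{1}{\|\vq_T\|^n},
\]
so each $\vq_T$ already satisfies the desired inequality of Corollary \ref{dirichlet corollary}. It remains to guarantee that the set $\{\vq_T : T > 1\}$ contains infinitely many distinct vectors.

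To see this, I would split into two cases. If there exists some nonzero $\vq \in \Z^n$ with $A\vq \in \Z^m$, then every integer multiple $k\vq$ ($k \in \Z \nz$) satisfies $\|A(k\vq) - k(A\vq)\|^m = 0 < 1/\|k\vq\|^n$ trivially, producing infinitely many solutions. Otherwise, suppose for contradiction that only finitely many distinct $\vq$'s arise as $T \to \infty$; call this finite set $S \subset \Z^n \nz$. For each $\vq \in S$, the quantity $\dist(A\vq,\Z^m) = \min_{\vp \in \Z^m} \|A\vq - \vp\|$ is strictly positive (by the case assumption), so $\eta \df \min_{\vq \in S}\dist(A\vq,\Z^m) > 0$. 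But then $\|A\vq_T - \vp_T\|^m \ge \eta^m$ for all $T$, contradicting the bound $\|A\vq_T - \vp_T\|^m \le 1/T$ once $T > \eta^{-m}$.

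This reasoning is entirely elementary, and no step poses a real obstacle; the only subtlety is recognizing that one must handle the degenerate case $A\vq \in \Z^m$ separately, since the infinitely many distinct $\vq$'s produced there come from scalar multiplication rather than from varying $T$ in Dirichlet's theorem.
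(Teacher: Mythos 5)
Your proof is correct, and it takes essentially the same route the paper intends: the paper declares the corollary to ``follow trivially'' from the fact that Dirichlet's Theorem provides solutions for an unbounded set of $T$, and your first paragraph is exactly that observation, including the point that $\|\vq_T\|^n < T$ upgrades the weak inequality $\le 1/T$ to the strict one $<1/\|\vq_T\|^n$ required by \equ{dc}. The only thing you add is a genuinely useful clarification the paper leaves implicit: why the collection $\{\vq_T\}$ must be infinite. Your case split (either some nonzero $\vq$ has $A\vq\in\Z^m$, in which case integer multiples of $\vq$ already do the job, or else $\dist(A\vq,\Z^m)$ is bounded below on any finite set of candidate $\vq$'s, contradicting $\|A\vq_T-\vp_T\|^m\le 1/T\to 0$) is exactly the right way to close this gap, and it is the step a careless reader might overlook. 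So this is not a different proof so much as the paper's intended proof written out with the one nontrivial detail made explicit.
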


Together the aforementioned results {initiate} the {\sl metric theory of \da}, a field concerned with understanding sets of $A\in M_{m,n}$ which admit improvements to Theorem \ref{dirichlet theorem} and Corollary \ref{dirichlet corollary}.
%{ { -- that is, sets of %Diophantine approximation 
%  matrices} for which the systems \equ{dt} and \equ{dc} are solvable for large enough $T$ or $\|\vq\|$ (resp.\ along an unbounded subset) when $\frac{1}{{T}}$ and $\frac{1}{\|\vq\|}$ are replaced by faster decaying quantities. 
  % functions of ${T}$ and $\|\vq\|$, respectively.  
  This paper %and our previous paper \cite{KW} was 
{has been} motivated by an observation that the sensible ``first questions" about the asymptotic set-up were settled long ago, while the analogous questions about the uniform set-up remain open.
% (though see \cite{kim}).  
Let us start by reviewing what is known in the asymptotic set-up.

For a function $\psi: \R_+\to \R_+$, let us define
$\wmn(\psi)$, the set of {\sl $\psi$-approximable} matrices,
to be the set of $A\in M_{m,n}$ for which
there exist infinitely many $\vq\in\Z^n$ such that\footnote{This definition essentially coincides with the one given in \cite{KM} but differs slightly from other sources, {such as e.g. \cite[\S 13]{BDV}, where the inequality $\|A\vq - \vp\|{<} \|\vq\|\psi(\|\vq\|)$} is used instead of \equ{dcpsi}.}
\eq{dcpsi}{ \|A\vq - \vp\|^m\le \psi(\|\vq\|^n) \quad \text{for some }\vp\in\Z^m.}

\noindent Throughout the paper we use the notation $\psi_a(x) := x^{-a}$. Thus Corollary \ref{dirichlet corollary} asserts that $\wmn(\psi_1) = \mr$, and in the above definition we have simply replaced $ \psi_1(\|\vq\|^n) $ in \equ{dc} with $ \psi(\|\vq\|^n) $.  
%It is well known that 
%there exists $c > 0$ such that $\wmn(c\psi_1) \ne \mr$;  {
%matrices which do not belong to $\wmn(c\psi_1) $ for some $c > 0$ {(those are called {\sl badly approximable}) form a set of}  %A real number is badly approximable if and only if its continued fraction coefficients are uniformly bounded.  
%It is well known that the set of badly approximable matrices has 
%full \hd\  {\cite{Sc}} and zero Lebesgue measure; {i.e.}\ $\wmn(c\psi_1)$ is co-null for any $c > 0$.  
Precise conditions for the Lebesgue measure of $\wmn(\psi)$ to be zero or full are given by

\begin{thm}[Khintchine-Groshev Theorem, {\cite{G}}]\label{khintchine theorem}
Given a non-increasing\footnote{The monotonicity condition can be removed unless $m=n=1$.} $\psi$,
the set $\wmn(\psi)$ has zero (resp.\ full) measure if and only if the series $\sum_k\psi(k)$ converges (resp.\ diverges). \end{thm}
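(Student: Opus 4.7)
The plan is to realise $\wmn(\psi)$ as a $\limsup$ set and apply the Borel--Cantelli lemma. For each $\vq\in\Z^n\nz$ set
$$N_\vq \df \{A\in [0,1)^{mn}:\|A\vq-\vp\|^m\le\psi(\|\vq\|^n)\text{ for some }\vp\in\Z^m\},$$
so that, modulo $\Z^{mn}$-periodicity, $\wmn(\psi)\cap[0,1)^{mn}$ is precisely $\limsup_{\vq} N_\vq$. For fixed nonzero $\vq$ the map $A\mapsto A\vq\bmod\Z^m$ pushes Lebesgue measure on $[0,1)^{mn}$ to Haar measure on $\T^m$, and a routine Fubini computation yields $\mu(N_\vq)\asymp \min(1,\psi(\|\vq\|^n))$, with implicit constants depending only on $m$.

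For the convergence half, I would group the lattice points into dyadic blocks $\|\vq\|^n\in[2^j,2^{j+1})$, of which there are $\asymp 2^j$. This gives $\sum_\vq\mu(N_\vq)\asymp\sum_j 2^j\psi(2^j)$, comparable to $\sum_k\psi(k)$ by Cauchy's condensation test and the monotonicity of $\psi$. The easy half of Borel--Cantelli then gives $\mu(\wmn(\psi))=0$ whenever the series converges.

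The divergence half is the real obstacle. It demands the Erd\H{o}s--Kochen--Stone quantitative Borel--Cantelli lemma, whose hypothesis is the pairwise quasi-independence
$$\mu(N_\vq\cap N_{\vq'})\ll \mu(N_\vq)\,\mu(N_{\vq'})\quad\text{for }\vq\neq\pm\vq'.$$
I would verify this via Fourier analysis on $\T^{mn}$: expanding $\mathbf{1}_{N_\vq}$ in characters $e(\langle M,A\rangle)$, the cross-integrals involve factors of the form $e(\langle M,\vq\rangle-\langle M',\vq'\rangle)$ which collapse whenever the $\vq,\vq'$ combination is nonzero, with the remaining error terms absorbed using the monotonicity of $\psi$. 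An alternative, closer to the dynamical viewpoint of the present paper, is to recognise the event $A\in N_\vq$ as an excursion of the diagonal $\SL(m+n,\R)$-flow on the space of unimodular lattices in $\R^{m+n}$, and to invoke exponential mixing for the flow to produce the required decorrelation directly. Either way, the quantitative Borel--Cantelli lemma yields $\mu(\limsup N_\vq)=1$ on the unit cube, hence full Lebesgue measure of $\wmn(\psi)$ on $\mr$.
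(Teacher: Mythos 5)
The paper does not actually prove Theorem~\ref{khintchine theorem}; it is stated as the classical Khintchine--Groshev result and cited to \cite{G}, with proofs deferred to \cite{Sp}, \cite{BDV}, and (for the dynamical route you allude to) \cite{KM}. So the comparison here is against the standard proofs in those references rather than against anything in this paper.

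Your reduction to a $\limsup$ set and your treatment of the convergence half (dyadic blocking plus the easy Borel--Cantelli) are fine. The gap is in the divergence half: the pointwise quasi-independence you posit,
\[
\mu(N_\vq\cap N_{\vq'})\ll \mu(N_\vq)\,\mu(N_{\vq'})\qquad(\vq\neq\pm\vq'),
\]
is simply false. Take $\vq'=2\vq$. Then every centre $p/q$ of an interval (or strip) in $N_\vq$ is also a centre for $N_{\vq'}$, and one finds $\mu(N_\vq\cap N_{\vq'})\gg\min\bigl(\mu(N_\vq),\mu(N_{\vq'})\bigr)$, so the ratio $\mu(N_\vq\cap N_{\vq'})/\bigl(\mu(N_\vq)\mu(N_{\vq'})\bigr)$ blows up like $1/\psi$ as $\|\vq\|\to\infty$. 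What the Erd\H{o}s--Kochen--Stone (or Sprind\v zuk second-moment) lemma actually requires is the \emph{averaged} bound
\[
\sum_{\|\vq\|,\|\vq'\|\le N}\mu(N_\vq\cap N_{\vq'})\;\ll\;\Bigl(\sum_{\|\vq\|\le N}\mu(N_\vq)\Bigr)^{2},
\]
and establishing it is precisely the technical core of the theorem. The overlap for parallel $\vq,\vq'$ contributes a gcd-type error term; in dimensions with $mn>1$ the parallel pairs are sparse enough that this error is absorbed with no monotonicity hypothesis, while in the case $m=n=1$ every pair is parallel, the gcd sums are genuinely dangerous, and monotonicity of $\psi$ is used to control them --- exactly the content of the footnote. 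A Fourier expansion will not make these overlaps disappear; they show up as the untwisted (resonant) frequencies. The dynamical alternative you mention (reading $A\in N_\vq$ as an excursion of the $g_t$-flow on $\SL_{m+n}(\R)/\SL_{m+n}(\Z)$ and using exponential mixing, as in \cite{KM}) is a legitimate route, but it replaces, rather than supplies, the missing pairwise estimate, so you should present it as the actual argument rather than as a way to verify the (false) pointwise bound.
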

% \bigskip
\noindent {See \cite{Sp} or \cite{BDV} for details, and also \cite{KM} for an alternative proof  using dynamics on the space of lattices.}

\smallskip

{Questions related to similarly improving  Theorem \ref{dirichlet theorem} were first addressed in two seminal papers \cite{Davenport-Schmidt, Davenport-Schmidt2} by  Davenport and Schmidt. However}
%Quite surprisingly, or perhaps not so surprisingly, as will be explained later in the paper, 
no zero-one law analogous to Theorem \ref{khintchine theorem} has yet been proved in the set-up of {uniform approximation {for general
%\footnote{{See our recent paper \cite{KW} for a statement in the case $m=n=1$.}}} 
$m,n \in \N$}. Let us introduce the following definition: {for  a non-increasing  function $\psi:[T_0,\infty)\to \R_+$, where $T_0 > 1$ is fixed,} {say that $A\in M_{m,n}$ is {\sl $\psi$-$Dirichlet$}, or $A\in\dmn(\psi)$},
%$\psi$ as above, 
%let $\dmn(\psi)$ %= D'_{m,n}(\psi)$ 
%be the set of $A\in M_{m,n}$ for which %{there exists $T_0$ such that}
if} the system
\eq{dtpsi}{ \|A\vq - \vp\|^m
< \psi({T})
\ \ \ \mathrm{and}
\ \ \|\vq\|^n
< {T}}
has a nontrivial integer solution for all  large enough $T$. 
{In other words, we have replaced $ \psi_1({T}) $ in \equ{dt} with $ \psi({T})$, demanded the existence of nontrivial integer solutions for all ${T}$ except those belonging to a bounded set, and sharpened one of the inequalities in  \equ{dt}. The latter change, in particular, implies the following observation: for non-increasing $\psi$, membership in $\dmn(\psi)$ depends only on the solvability of the system \equ{dtpsi} at integer values of ${T}$. (To show this it suffices to replace $T$ with $\lceil T \rceil$ and use the monotonicity of $\psi$.)}

%Call the elements of $\dmn(\psi)$ the $\psi$-$Dirichlet$ matrices.  
%Notice that this definition arises by replacing $ \psi_1({T}) $ in \equ{dt} with $ \psi({T})$ and demanding the existence of nontrivial integer solutions for all ${T}$ except those belonging to a bounded set. %Comparing \equ{dt} with \equ{dtpsi}, 
%{Also notice that the first inequality in \equ{dt}} has been sharpened; 
%We choose this definition so that, for non-increasing $\psi$, membership of $\dmn(\psi)$ depends only on the solvability of the system at integer values of ${T}$.  
%though Dirichlet's Theorem cannot be strengthened in this way, i
It is not difficult to see that ${D}_{1,1}(\psi_1)= \R$, and that for general $m,n$, almost every matrix is $\psi_1$-Dirichlet.
%What is the relationship between $\dmn(\psi)$ and $\wmn(\psi)$? 
%Clearly $\dmn(\psi) \subset \wmn(\psi)$ whenever $\psi$ is non-increasing. However the two sets differ significantly for functions $\psi$ decaying faster than $\psi_1$. For example, 
 {In contrast, it was proved} %by Davenport and Schmidt 
in \cite{Davenport-Schmidt2} for $\min(m,n) = 1$, and in \cite{KWe} for the general case, that  for any $c < 1$, the set $\dmn(c\psi_1)$ of $c\psi_1$-Dirichlet matrices has Lebesgue measure zero. This naturally motivates the following

\begin{que} \label{dirichlet question}
What is a necessary and sufficient condition on a non-increasing function $\psi$ (presumably expressed in the form of convergence/divergence of a certain series) guaranteeing that
the set $\dmn(\psi)$ has zero or full measure? \end{que}

 In \cite{KW} we give an answer to this question in case $m=n=1$, but in general Question \ref{dirichlet question} {seems to be} much harder than its counterpart for the sets $\wmn(\psi)$, answered by Theorem \ref{khintchine theorem}. We comment later in the paper on the reason for this difficulty, but the main subject of this paper is different: we take up an analogous inhomogeneous approximation problem, describe the analogues of the statements and concepts discussed in this section, and then show how an inhomogeneous analogue of Question \ref{dirichlet question} admits a complete solution  based on a correspondence between \da\ and dynamics on  {homogeneous spaces}.

%The structure of the paper is as follows. In the next section we introduce the inhomogeneous approximation problem and set up a shrinking target phenomenon in a dynamical system.  We do a warm-up problem, Proposition \ref{c/t}, that motivates an inhomogeneous analogue of Question \ref{dirichlet question}, demonstrates the usefulness of the dynamical system, and introduces several key ideas to be used later.  This is followed by the statement of the main theorem, Proposition \ref{mainthm}.  Section 3 briefly relates the integrability criterion of Proposition \ref{mainthm} to the summability of the measures of a family of shrinking targets. Section 4 is devoted to estimating the measures of such targets.  Sections 5 and 6 handle the convergence and divergence cases of Theorem \ref{mainthm}, respectively.

\subsection{Inhomogeneous approximation: the main result}  
The theory of inhomogeneous \da\ starts when one replaces the values of a system of linear forms $A\vq$ by those of a system of {\sl affine forms} $\vq \mapsto A\vq + \textbf{b}$, where $A\in M_{m,n}$ and $\textbf{b}\in\R^m$. {Consider a non-increasing function $\psi:[T_0,\infty)\to \R_+$ and}, {following} the definition of the set $\dmn(\psi)$,
% in the last section, 
let us say {that} a pair $(A, \mathbf{b}) \in M_{m,n}\times \mathbb{R}^m$ is \emph{$\psi$-Dirichlet} if there exist $\vp \in \mathbb{Z}^m$, $\vq\in \mathbb{Z}^n$ such that
\eq{ideq}{\|A \vq+\mathbf{b}-\vp\|^m< \psi({T})\hspace{10mm}\|\vq\|^n< {T}}
whenever ${T}$ is large enough. {(Note that in this set-up there is no need to single out the case $\vq = 0$.)} Denote the set of $\psi$-Dirichlet pairs by $  \admn(\psi)$.  Note that, as {is the case} with $\dmn(\psi)$, membership {in} $\admn(\psi)$ depends only on the solubility of these inequalities at integer values of ${T}$, provided $\psi$ is non-increasing. {Hence without loss of generality one can assume $\psi$ to be continuous.}

Let us %see what happens to $\admn(\psi)$ as we make $\psi$ decay more and more rapidly, 
start  with the simplest case: $\psi \equiv c$ is a constant function, or $\psi= c\psi_0$ in our notation. It is a trivial consequence of Dirichlet's Theorem that whenever $c>0$, $$\|A \vq-\vp\|^m< c \hspace{10mm} \| \vq \|^n < {T}$$ is solvable in $ \vp \in \Z^m, \vq \in \Z^n\nz$ whenever ${T}> c^{-1}$. %{\color{red} 
By contrast, it is clear that {one} cannot always {solve
$$\|A \vq+ \mathbf{b}-\vp\|^m< c \hspace{10mm} \| \vq \|^n < {T}$$}for $c\leq \frac{1}{2^{m}}$;
%}; 
for example, take $A$ to be an integer matrix and take $\mathbf{b}$ with coordinates in $\Z +\frac12$. However, it follows from {Kronecker's} Theorem \cite[\S 3.5]{Cassels} that for a given $A\in M_{m, n}$, there exist $\mathbf{b}\in \mathbb{R}^m$ and $c>0$ such that $(A,\mathbf{ b}) \notin  \admn(c\psi_0)$ --- {which amounts to saying that $A\mathbb{Z}^n$ is not dense in $\mathbb{R}^m/\mathbb{Z}^m$} %{ here I removed ``such that" because I thought it may suggest a correspondence with the previous ``such that." But obviously feel free to change it back.  The equivalence  we're drawing may still not be completely clear;  maybe if I put it in a footnote I could make it more clear?} -
 --- only if $A^t(\mathbb{Z}^m\nz)$ contains an integer vector. The set of such $A$ has measure zero since it is the union over $\vq\in\mathbb{Z}^n$, $\vp\in \mathbb{Z}^m\nz$ of the sets $\{A: A^t \vp= \vq\}$. Thus for every $c>0$, $  \admn(c\psi_0)$ has full measure.

Once {$\psi$ is allowed} to decay to zero, {the sets $\admn(\psi)$ become smaller. 
% we are guaranteed at least one $(\alpha, \beta)\notin \tilde D_{1,1}(\psi)$ with $\alpha \notin \mathbb{Q}$ [Ca, 3.3.3]. 
%{\color{red} 
In particular, using %the ergodicity of a diagonal flow 
{dynamics} on the space of {grids in $\R^{m+n}$}, 
%, when one considers  $\psi= C\psi_1$ for an arbitrary $C > 0$, %we are able to 
one can easily prove  (see Proposition \ref{c/t} below) that $\admn(C\psi_1)$ is null for any $C > 0$.
%This will be done after we set the stage with some notation and lemmas.
Thus {one can} naturally ask the following inhomogeneous analogue %question similar to 
of Question \ref{dirichlet question}:}

\begin{que}\label{inhomdirichlet question}What is a necessary and sufficient condition on a non-increasing function $\psi$ (presumably expressed in the form of convergence/divergence of a certain series) guaranteeing that
the set $\admn(\psi)$ has zero or full measure?
\end{que}

The remainder of this work will be given to a proof of the following answer:

\begin{thm}\label{mainthm} 
%Let ${T}_0>0$ and let $\psi: [{T}_0,\infty ) \rightarrow \R_+$ be a non-increasing function.
{Given a non-increasing $\psi$,
the set $\admn(\psi)$ has zero (resp.\ full) measure if and only if the series \eq{psisum}{\sum_j \frac{1}{\psi(j)j^2}} diverges (resp.\ converges).}
\end{thm}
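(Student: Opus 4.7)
My plan is to recast the $\psi$-Dirichlet property as a shrinking-target problem on the space of unimodular grids in $\R^{m+n}$ and then resolve it with a dynamical Borel-Cantelli lemma driven by exponential mixing of a diagonal flow. Specifically, I associate to $(A, \vb) \in M_{m,n} \times \R^m$ the grid
\[
\Lambda_{A,\vb} \df \bigl\{ (A\vq - \vp + \vb,\, \vq) : \vp \in \Z^m,\, \vq \in \Z^n \bigr\}
\]
in the space $X \df \ASL_{m+n}(\R)/\ASL_{m+n}(\Z)$ of unimodular grids, endowed with its Haar probability measure $\mu_X$. Letting $g_t = \diag(e^{t/m} I_m, e^{-t/n} I_n)$ and, for $T = e^t$,
\[
R_t \df \bigl\{(\x,\s) \in \R^m \times \R^n : \|\x\|^m < T\psi(T),\ \|\s\|^n < 1\bigr\},
\]
a box of volume $\asymp T\psi(T)$, a direct rescaling check shows that \equ{ideq} has an integer solution at $T$ iff $g_t\Lambda_{A,\vb} \cap R_t \neq \vrn$; hence $(A,\vb) \in \admn(\psi)$ iff $g_t \Lambda_{A,\vb}$ lies in $U_t \df \{\Lambda : \Lambda \cap R_t \neq \vrn\}$ for all sufficiently large $t$.

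Next I would estimate the measure of the complementary target. Using Siegel's integral formula for grids, $\int_X \#(\Lambda \cap R_t)\,d\mu_X = \vol(R_t)$, together with a Rogers-type second-moment bound on $\#(\Lambda \cap R_t)$, one obtains
\[
\mu_X(X \setminus U_t) \;\asymp\; \min\!\left(1,\, \tfrac{1}{T\psi(T)}\right).
\]
By the monotonicity of $\psi$ the $\psi$-Dirichlet condition may be tested along the arithmetic progression $t_k = k$ (modulo enlarging/shrinking $R_t$ by a bounded factor, in the same spirit as the paper's reduction to integer $T$). The map $(A,\vb) \mapsto \Lambda_{A,\vb}$ parameterizes a piece of the unstable horospherical leaf of $g_t$ through the standard grid $\Z^{m+n}$, so exponential mixing of $g_t$ on $(X, \mu_X)$, applied to smooth thickenings of the indicator of $U_t$, yields effective equidistribution of the translates $g_k \Lambda_{A,\vb}$ and a quantitative Borel-Cantelli lemma: almost every pair $(A,\vb)$ lies in $\admn(\psi)$ (resp.\ in its complement) iff $\sum_k \mu_X(X \setminus U_{t_k}) < \infty$ (resp.\ $=\infty$). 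The computation
\[
\sum_k \mu_X(X \setminus U_{t_k}) \;\asymp\; \sum_k \min\!\left(1, \tfrac{1}{e^k \psi(e^k)}\right) \;\asymp\; \int_1^\infty \frac{dT}{T^2\psi(T)} \;\asymp\; \sum_{j=1}^\infty \frac{1}{\psi(j)\, j^2}
\]
then completes the proof; note that the critical factor $1/T$ reconciling the discrete logarithmic sampling $t_k = \log T_k$ with the arithmetic series in the theorem arises precisely from the Jacobian $dT/dt = T$.

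The hardest step will be the quantitative equidistribution: the pairs $(A,\vb)$ fill only a proper submanifold of $X$, so Borel-Cantelli for their orbits does not follow directly from mixing with respect to $\mu_X$. It must instead be obtained by thickening the unstable leaf transverse to the flow and reducing to decay of correlations for smooth observables on $X$, in the spirit of the Kleinbock-Margulis method; the divergence direction in particular requires the quasi-independence afforded by exponential mixing, whereas the convergence direction is a fairly routine application of the classical Borel-Cantelli lemma once the measure estimate for $U_t$ is in hand.
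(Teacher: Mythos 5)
Your overall strategy matches the paper's: recast the $\psi$-Dirichlet condition as a shrinking-target problem on the space of grids, estimate the target measures, invoke a dynamical Borel--Cantelli lemma driven by exponential mixing, and transfer from a.e.\ grid to a.e.\ pair $(A,\vb)$ by thickening the horospherical leaf. Working directly with the box $R_t$ of volume $\asymp T\psi(T)$ rather than first applying the Dani change of variables (which makes the target a sup-norm ball of radius $e^{z_\psi(t)}$) is a harmless reparametrization: a diagonal unimodular map takes the origin-centered box to a cube of equal volume while preserving the Haar measure of the avoidance set.

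There is, however, a genuine gap in the target measure estimate. Put $N=\#(\Lambda\cap R_t)$ and $V=\vol(R_t)$. Siegel's formula gives $E[N]=V$, and a Rogers-type second moment gives $\mathrm{Var}(N)=O(V)$; the one-sided Chebyshev (Cantelli) inequality then yields the \emph{upper} bound $\widehat\mu(\{N=0\})\le \mathrm{Var}(N)/\big(\mathrm{Var}(N)+V^2\big)\lesssim 1/V$, which is essentially Athreya's random Minkowski estimate. But this says nothing about the needed \emph{lower} bound $\widehat\mu(\{N=0\})\gtrsim 1/V$: the same moment data $E[N]=V$, $\mathrm{Var}(N)=O(V)$ are satisfied (up to constants) by a Poisson variable of mean $V$, for which $P(N=0)=e^{-V}$ is exponentially small, so no argument using only the first two moments can produce the lower bound. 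That lower bound is indispensable --- together with the upper bound it gives $\Phi_\Delta(z)\asymp e^{-kz}$, which furnishes both the DL property required by the dynamical Borel--Cantelli lemma in the divergence case and the fact that $\sum_t \widehat\mu\big(\widehat{X}_k\setminus U_t\big)$ diverges whenever $\sum_j 1/(j^2\psi(j))$ does. The paper obtains it by an explicit geometric computation: for $\Lambda=g\Z^k+\vw$ with $g$ in the Siegel set $K A_1 N_{-1,0}$, it computes the Lebesgue measure $Q(\Lambda,r)$ of the set of translates $\vw$ in the torus fiber $\R^k/g\Z^k$ avoiding $B(0,r)$, then integrates this over the Siegel set against the Haar density in Iwasawa coordinates to extract the leading term $\asymp r^{-k}$. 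You would need to replace the moment heuristic with a computation of this kind.
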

% {  something bothers me about the sum vs integral issue.  Maybe we can discuss.} \comm{Sure.}
%\noindent a) If $\int_{{T}_0}^\infty t^{-2}\psi (t)^{-1} dt =\infty$, then $\admn(\psi)$ has zero measure.

%\noindent b) If $\int_{{T}_0}^\infty t^{-2}\psi (t)^{-1} dt < \infty$, then $\admn(\psi)$ has full measure.}

%\smallskip
Note that this immediately gives results such as 
\begin{itemize}
\item[$\bullet$] $\admn({C}\psi_a)$ has zero (resp.\ full) measure if {$a\geq 1 $ (resp.\ $a<1$});
\item[$\bullet$]  for $\psi({T}) = {C}(\log {T})^b\psi_1({T})$,  $\admn(\psi)$ has zero (resp.\ full) measure if $b\leq 1$ (resp.\ $b>1$).
\end{itemize}

{Our argument is based on a correspondence between \da\ and homogeneous dynamics.
%, specifically on a reduction to a dynamical Borel-Cantelli lemma on the space of {grids}. 
In the next section we {introduce the space of {grids} in $\R^{m+n}$  and reduce} the aforementioned inhomogeneous approximation problem %and set up 
{to} a shrinking target phenomenon for a flow on that space.  We do a warm-up problem, Proposition \ref{c/t}, that %motivates an inhomogeneous analogue of Question \ref{dirichlet question}, 
demonstrates the usefulness of {the reduction to dynamics} and introduces several key ideas to be used later.  This is followed by the statement of the main {dynamical result, Theorem {\ref{dynamicalmainthm}, which we prove in the two subsequent sections.} %which is essentially extracted  from \cite{KM}} and whose proof we present in the appendix, written by 
%due to the first-named author and G.A.\ Margulis \cite{KM, KMapp}.
%, thereby correcting a minor mistake made in \cite{KM} and reproving one of the propositions from that paper in a slightly more general form. 
%Sections 4 and 5 establish the conditions of Theorem \ref{thm4.3}  for {diagonal flows} on the space of {grids}.  In Section 5 we prove Theorem \ref{mainthm}.} 
{The last section contains some concluding remarks, in particular a discussion of Question \ref{dirichlet question} and  other open questions.}

\ignore{Once we allow $\psi$ to decay to zero, we are guaranteed at least one $(\alpha, \beta)\notin D_{1,1}(\psi)$ with $\alpha \notin \mathbb{Q}$ [Ca, 3.3.3]. When we move to $\psi= c\psi_1$, we are able to compute the measure of $D_{m,n}(\psi)$ using the ergodicity of a diagonal flow on the space of {grids}.  We give the proof as a sort of warm-up that will motivate the dynamical system we will use to prove Theorem \ref{mainthm}.  It will also introduce some key ideas to be used later.  It will come after we set the stage with some notation and lemmas. }

\section{{Dynamics on the space of grids: a warm-up}}
% on the space of  affine  lattices
} \label{dyn}

%\subsection{The set-up and motivation}\label{2.1} 
\noindent {Fix  $k\in\N$ and let {$${G_{k}} := \SL_k(\R)\text{ and } {\widehat{G}_{k}} := \ASL_k(\R) = {G_{k}} \rtimes \R^k;$$ the latter is} the group  of volume-preserving} affine transformations of $\R^k$. Also  put $${{\Gamma}_{k}} := \SL_k(\Z)    \text{ and } {\widehat{\Gamma}_k} := \ASL_k(\Z) ={{\Gamma}_{k}} \rtimes \Z^k.$$  {Elements of ${\widehat{G}_{k}}$ will be denoted by $%\widehat g =
 \langle g,\vw\rangle$ where $g\in {G_{k}}$ and $\vw\in\R^k$; that is, {$\langle g,\vw\rangle$} is the affine transformation $\x\mapsto g\x + \vw$.} Denote by ${\widehat{X}_k}$ the space of {translates of unimodular lattices in $\R^k$; elements of ${\widehat{X}_k}$ will be referred to as {\sl {unimodular grids}}.
% and let $\ASL_k(\R)= \SL_k(\R) \rtimes \R^k$ be the space of affine transformations of $\R^k$. 
Clearly ${\widehat{X}_k}$ is canonically identified with  ${\widehat{G}_{k}}/{\widehat{\Gamma}_k}$  via $$\langle g,\vw\rangle{\widehat{\Gamma}_k}   \in{\widehat{G}_{k}}/{\widehat{\Gamma}_k}\quad\longleftrightarrow\quad g\Z^k +\vw\in {\widehat{X}_k}.$$ 
{Similarly, ${X}_k := {G_{k}}/{{\Gamma}_{k}}$ is {identified with} the space of unimodular lattices in $\R^k$ {(i.e.\ unimodular grids containing the zero vector}).}  Note that ${\widehat{\Gamma}_k}$ (resp.\ ${\Gamma}_k$) is a lattice in ${\widehat{G}_{k}}$ (resp.\ ${G_{k}}$).  {We will denote by $\widehat\mu$ ({resp.\ $\mu$}) the normalized Haar measures on ${\widehat{X}_k}$ and ${X_k}$} {respectively.}
% we denote by $\mu$  the ${\widehat{G}_{k}}$-invariant probabililty} measure  on ${\widehat{X}_k}$. 

\smallskip
{Now fix $m,n\in\N$ with $m+n=k$},  and {for} $t \in\R$ let \eq{diagflow}{{{g_t}}%={{g_t}}^{m, n}
:= \diag(e^{{{t}}/m},\dots,e^{{{t}}/m},e^{-{{t}}/n},\dots,e^{-{{t}}/n}),}
where there are $m$ copies of $e^{{{t}}/m}$ and $n$ copies of $e^{-{{t}}/n}$. %Note that $${{g_t}}: X_k \rightarrow X_k,~ \Gamma \mapsto {{g_t}} \Gamma$$ is a measure-preserving {partially hyperbolic} flow on $X_k$. 
{The so-called {\sl expanding horospherical subgroup} of ${\widehat{G}_{k}}$ with respect to $\{g_t: t > 0\}$ is given by 
\eq{h}{H:= \{u_{A, \mathbf{b}}: A \in M_{m,n}, \mathbf{b} \in \mathbb{R}^m\}, \text{ where }u_{A, \mathbf{b}}:= \left\langle\left( \begin{array}{cc} I_m & A \\
0 & I_n \\ \end{array} \right) , \left( \begin{array}{c} \mathbf{b} \\ 0 \end{array} \right)\right\rangle.}
On the other hand,
\eq{htilde}{\tilde H :=\left\{ \left\langle \left( \begin{array}{cc} P & 0 \\ R & Q \end{array} \right), \left( \begin{array}{c} 0 \\ \mathbf{d} \end{array} \right) \right\rangle\left|\begin{aligned}P\in M_{m,m},\ &Q\in M_{n,n},\ \det(P) \det(Q)=1\\ &R\in M_{n,m},\   \mathbf{d} \in \mathbb{R}^n\end{aligned}\right.\right\}}
is a subgroup of ${\widehat{G}_{k}}$ complementary to $H$ which is {\sl non-expanding} with respect to conjugation by $g_t$, $t\ge 0$: it is easy to see that
\eq{conj}{{{g_t}}\left\langle \left( \begin{array}{cc} P & 0 \\ R & Q \end{array} \right), \left( \begin{array}{c} 0 \\ \mathbf{d} \end{array} \right) \right\rangle g_{-t} = \left\langle \left( \begin{array}{cc} P & 0 \\ e^{-\frac{m+n}{mn}t}R & Q \end{array} \right), \left( \begin{array}{c} 0 \\ e^{-t/n}\mathbf{d} \end{array} \right) \right\rangle.
}
Let us also denote
\eq{lambdaab} {\Lambda_{A, \mathbf{b}}:=u_{A, \mathbf{b}} \mathbb{Z}^{k} {= \left\{\left( \begin{array}{c} A\vq + \mathbf{b} - \vp \\ \vq \end{array} \right): \vp\in\Z^m,\ \vq\in\Z^n\right\}}.}
The reduction of  \di\ properties of $(A,\mathbf{b})$ to the behavior of the $g_t$-trajectory of $\Lambda_{A, \mathbf{b}}$ described {below} mimics the classical Dani correspondence for homogeneous \da\  \cite{dani, KM} and dates back to    \cite{K99} {(see also more recent papers  \cite{Shapira-Cassels, ET, GV})}. 
The crucial role is played by a function {$\Delta: {\widehat{X}_k}\to [-\infty,\infty)$} given by 
\eq{defdelta}{\Delta (\Lambda):= 
\log\inf_{\vv\in \Lambda}\|\vv\|.}
%}
%~\hspace{5mm} {(\Delta(\Lambda)= 0 ~\text{when }  \Lambda\text{ has a vector of norm }\le 1)}
%.}}
{Note that $\Delta(\Lambda) = -\infty$ if and only if $\Lambda \ni 0$. Also 
it is easy to see that $\Delta$ is uniformly continuous outside of the set where it takes small values:
\smallskip

%{
\begin{lem}\label{uc}{For any $z\in\R$, $\Delta$ is uniformly continuous on the set $ \Delta^{-1}\big([z,\infty)\big)$. That is,  for any $z\in \R$ and any $\vre>0$ there exists a neighborhood $U$ of {the} identity in ${\widehat{G}_{k}}$ such that   whenever $\Delta(\Lambda) \ge z$ and ${g}\in U$, one has $|\Delta(\Lambda)  - \Delta({g}\Lambda) | < \vre$.}\end{lem}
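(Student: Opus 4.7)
The plan is to reduce the lemma to a quantitative estimate on how $\inf_{\vv\in\Lambda}\|\vv\|$ is perturbed by the action of an element of $\widehat{G}_k$ close to the identity. Writing $g=\langle A,\vw\rangle$, the affine action gives $g\Lambda = \{A\vv+\vw : \vv\in\Lambda\}$, and hence $e^{\Delta(g\Lambda)} = \inf_{\vv\in\Lambda}\|A\vv+\vw\|$. The assumption $\Delta(\Lambda)\ge z$ translates to $\|\vv\|\ge e^z$ for every $\vv\in\Lambda$, and this uniform lower bound is the single fact about $\Lambda$ that I intend to use.

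For the inequality $\Delta(g\Lambda)\le \Delta(\Lambda)+\vre$, I would pick $\vv_0\in\Lambda$ with $\|\vv_0\|$ arbitrarily close to $e^{\Delta(\Lambda)}$ (such vectors exist by discreteness of $\Lambda$), apply the triangle inequality together with the operator norm $\|A\|$ to get $\|A\vv_0+\vw\|\le\|A\|\cdot\|\vv_0\|+\|\vw\|$, and divide through by $e^{\Delta(\Lambda)}\ge e^z$ to obtain
\[e^{\Delta(g\Lambda)-\Delta(\Lambda)}\le\|A\|+e^{-z}\|\vw\|.\]
For the matching lower bound, I would take $\vv_1'\in g\Lambda$ nearly realizing $e^{\Delta(g\Lambda)}$, write $\vv_1'=A\vv_1+\vw$ with $\vv_1\in\Lambda$, and use $e^{\Delta(\Lambda)}\le\|\vv_1\|\le\|A^{-1}\|(\|\vv_1'\|+\|\vw\|)$ to deduce
\[e^{\Delta(g\Lambda)-\Delta(\Lambda)}\ge\|A^{-1}\|^{-1}-e^{-z}\|\vw\|.\]

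Both bounds involve only $\|A\|$, $\|A^{-1}\|$, $\|\vw\|$, and the constant $e^{-z}$ supplied by the hypothesis. To finish, I would choose a neighborhood $U$ of the identity in $\widehat{G}_k$ on which $\|A\|$ and $\|A^{-1}\|^{-1}$ are within $e^{\pm\vre/2}$ of $1$ and $\|\vw\|$ is sufficiently smaller than $e^{z}$; these choices depend only on $z$ and $\vre$, so the conclusion $|\Delta(g\Lambda)-\Delta(\Lambda)|<\vre$ will hold uniformly in $\Lambda\in\Delta^{-1}\bigl([z,\infty)\bigr)$.

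No serious technical obstacle is expected here. The only point worth flagging is that uniformity requires every constant in the estimate to be independent of the specific $\Lambda$, which is precisely why the hypothesis $\Delta(\Lambda)\ge z$ must be retained: without it the nuisance term $e^{-\Delta(\Lambda)}\|\vw\|$ would be unbounded, matching the fact that $\Delta$ is globally discontinuous on $\widehat{X}_k$, dropping to $-\infty$ on the closed subset of lattices.
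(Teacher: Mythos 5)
Your proposal is correct and proceeds by essentially the same argument as the paper: bound the effect of the linear part by the operator norms of $A$ and $A^{-1}$, bound the effect of the translation part by $\|\vw\|$ via the triangle inequality, and use the hypothesis $\Delta(\Lambda)\ge z$ to turn the additive error $\|\vw\|$ into the multiplicatively small quantity $e^{-z}\|\vw\|$. The paper merely packages the two sources of error into a single parameter $c>1$ and writes the conclusion as the two-sided estimate $c^{-2}\|\vv\|\le\|g\vv+\vw\|\le c^{2}\|\vv\|$, but the underlying estimates are identical to yours.
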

%{
\begin{proof} %{\color{red}
Let $c>1$, $z \in \R$.   Choose $\delta>0$ so that $$c^{-1}\|\vv\| \leq \|\vv+\vw\| \leq c \|\vv\|$$ whenever $\|\vw\| \leq \delta$ and $\log \|\vv\| \geq z - \log c$.  Then if $\log  \|\vv\| \geq z$, $\|\vw\| < \delta$ and the operator norms of both $g$ and $g^{-1}$ are not greater than $c$ {(the latter two conditions define an open neighborhood $U$ of {the} identity in $\widehat{G}$ such that $\langle g,\vw\rangle\in U$)},  we have
$$\frac{\|\vv\|}{c^2} \leq \frac{\|g\vv\|}{c}  \leq \|g\vv + \vw\| \leq  c\cdot \|g\vv\|  \leq c^2 \cdot \|\vv\|.$$ 
Thus if $\Delta(\Lambda)\geq z$ {and $\langle g,\vw\rangle\in U$}, we have $$\Delta(\Lambda)-2\log c \leq \Delta(g\Lambda +\vw)\leq \Delta(\Lambda)+ 2\log c.$$   Since $c>1$ is arbitrary, $\Delta$ is uniformly continuous on $\Delta^{-1}\big([z,\infty)\big)$.%}
\end{proof}%}

{Another important feature of $\Delta$ is that it is % ${ \Delta} $  is uniformly continuous and 
unbounded from above;} indeed, the {grid} $$\diag(1,\dots,1,\tfrac14e^{-z}, 4e^z)  \Z^k+ (0,\dots,0,2e^z)$$  is disjoint from the ball centered at $0$ of radius $e^z$. Consequently,    sets ${ \Delta^{-1}\big([z,\infty)\big)} $  have non-empty interior for all $z\in\R$.

\smallskip
{Let us now describe a basic special case  of the correspondence between inhomogeneous improvement of Dirichlet's Theorem and dynamics {on ${\widehat{X}_k}$. The next lemma} is essentially an inhomogeneous analogue of \cite[Proposition 2.1]{KWe}:
\begin{lem}\label{corr_c/t}Let $C>0$ and put $z = \frac {\log C}{m+n}$. Then $(A,\mathbf{b})\in \admn(C\psi_1)$ if and only if $\Delta({{g_t}} \Lambda_{A, \mathbf{b}}) < z$ %belongs to   %\eq{superlevel}
%${ \Delta^{-1}\big([z,\infty)\big)} $  
for 
%an unbounded set of 
all large enough ${t > 0}$.
\end{lem}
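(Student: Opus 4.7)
The plan is to unpack the definition of $\Delta$ at the grid $g_t \Lambda_{A,\mathbf{b}}$ and observe that the resulting condition is literally the inhomogeneous Dirichlet system in disguise, after a change of variables between $t$ and $T$.

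First, I would write down a typical vector in $\Lambda_{A,\mathbf{b}}$: by \equ{lambdaab} it has the form $\binom{A\vq + \mathbf{b} - \vp}{\vq}$ with $\vp\in\Z^m$, $\vq\in\Z^n$. Applying $g_t$ from \equ{diagflow} scales the top $m$ coordinates by $e^{t/m}$ and the bottom $n$ coordinates by $e^{-t/n}$, so a typical vector of $g_t\Lambda_{A,\mathbf{b}}$ is $\binom{e^{t/m}(A\vq + \mathbf{b} - \vp)}{e^{-t/n}\vq}$. Because $\|\cdot\|$ is the sup norm, the inequality $\Delta(g_t\Lambda_{A,\mathbf{b}}) < z$, i.e.\ $\inf_{\vv}\|\vv\| < e^z$, is equivalent to the existence of $\vp\in\Z^m,\vq\in\Z^n$ satisfying simultaneously
$$\|A\vq + \mathbf{b} - \vp\|^m < e^{mz - t} \qquad\text{and}\qquad \|\vq\|^n < e^{nz + t}.$$

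Second, I would introduce the reparametrization $T := e^{nz+t}$. Since $z=\frac{\log C}{m+n}$ gives $e^{(m+n)z}=C$, one computes $e^{mz-t} = e^{(m+n)z}\cdot e^{-(nz+t)} = C/T$, so the above pair of inequalities is exactly $\|A\vq + \mathbf{b} - \vp\|^m < C/T$ and $\|\vq\|^n < T$, which is the system \equ{ideq} for $\psi = C\psi_1$ at parameter $T$. The map $t\mapsto T$ is a strictly increasing bijection $\R\to(0,\infty)$, so "for all sufficiently large $t>0$" corresponds to "for all sufficiently large $T$". This yields the claimed equivalence.

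There is no serious obstacle here: the lemma is essentially a bookkeeping verification of the Dani-type correspondence in the inhomogeneous setting, and the only thing to choose carefully is the constant $z$, which is forced by matching $e^{(m+n)z}=C$. Minor points worth mentioning are that one need not exclude $\vq=0$ in the inhomogeneous setup (as noted after \equ{ideq}) and that the exceptional case $\Delta(g_t\Lambda_{A,\mathbf{b}})=-\infty$ (occurring precisely when $\mathbf{b}\in A\Z^n + \Z^m$) causes no trouble since then every inequality $\Delta<z$ holds trivially and the Dirichlet condition is also automatically satisfied by the corresponding integer solution.
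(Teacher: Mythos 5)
Your proof is correct and takes essentially the same route as the paper: unwind the definitions of $\Delta$, $g_t$, and $\Lambda_{A,\mathbf{b}}$, then change variables via $T=e^{nz+t}$ so that the two conditions collapse to a single threshold $e^z=C^{1/(m+n)}$ in both coordinates. One small inaccuracy in your closing parenthetical: $\Delta(g_t\Lambda_{A,\mathbf{b}})=-\infty$ occurs precisely when $0\in\Lambda_{A,\mathbf{b}}$, i.e.\ when $\mathbf{b}\in\Z^m$ (not $\mathbf{b}\in A\Z^n+\Z^m$), though as you observe this case causes no trouble for the equivalence.
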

\begin{proof} %We may assume $C> 1$. 
For $T > 1$, put $\psi({T})=C\psi_1(T)= C/{T}$, and 
%suppose that $(A,\mathbf{b})\notin \admn(C\psi_1)$; that is, for an unbounded set of $T > 1$, the system \equ{ideq} has no integer solutions. D
define 
$${t} := \log T - \frac n{m+n} \log C\quad\Longleftrightarrow\quad T = C^{\frac m{m+n} }e^t.$$
Then $\psi(T) =  C^{\frac n{m+n} }e^{-t}$, and the system \equ{ideq} can be written as
$${\|A \vq+\mathbf{b}-\vp\|^m< C^{\frac n{m+n} }e^{-t}\hspace{10mm}\|\vq\|^n< C^{\frac m{m+n} }e^t},$$
which is 
 the same a $$e^{ {t}/m} \|A\vq +\mathbf{b}-\vp \| < C^{\frac 1{m+n}} \hspace{10mm} e^{-{t}/n} \| \vq \| <  C^{\frac 1{m+n}}.$$ 
In view of \equ{diagflow}, \equ{lambdaab} and \equ{defdelta}, the solvability of \equ{ideq} in $(\vp,\vq)\in\Z^{m+n}$ is equivalent to $$\Delta({{g_t}} \Lambda_{A, \mathbf{b}})< \frac {\log C}{m+n} = z,$$
and the conclusion follows.
\end{proof}}

%We first illustrate the inhomogeneous Dani correspondence 
{We are going to use the above lemma and  the ergodicity of the ${{g_t}}$-action on ${\widehat{X}_k}$ to compute {the} Lebesgue} measure of $\admn(C\psi_1)$. The proof contains a Fubini Theorem argument (following \cite[Theorem 8.7]{KM} and dating back to \cite{dani}) used to pass from an almost-everywhere statement for lattices to an almost-everywhere statement for pairs in $M_{m,n}\times \R^m$. We will refer to this argument twice more in the sequel.

\begin{prop}\label{c/t}{{For any $m,n\in \N$ and} any $C>0$, the set $\admn(C\psi_1)$ has Lebesgue measure zero.}\end{prop}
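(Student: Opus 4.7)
The plan is to combine the dynamical reformulation from Lemma \ref{corr_c/t} with ergodicity of the $g_t$-action on $\widehat{X}_k$, and then transfer the resulting Haar-null statement to Lebesgue measure via a Fubini-type argument.

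Setting $z = \tfrac{\log C}{m+n}$ and $B := \{\Lambda \in \widehat{X}_k : \Delta(g_t \Lambda) < z \text{ for all sufficiently large } t > 0\}$, Lemma \ref{corr_c/t} identifies $\admn(C\psi_1)$ with $\{(A, \mathbf{b}) : \Lambda_{A, \mathbf{b}} \in B\}$. The first step is to show $\widehat{\mu}(B) = 0$. The superlevel set $S := \Delta^{-1}\big([z, \infty)\big)$ has nonempty interior by the remark preceding Lemma \ref{corr_c/t}, hence $\widehat{\mu}(S) > 0$; meanwhile the $g_t$-action on $(\widehat{X}_k, \widehat{\mu})$ is ergodic (in fact mixing), which one can deduce from mixing on $(X_k, \mu)$ together with the semidirect product structure $\widehat{G}_k = G_k \ltimes \R^k$. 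Birkhoff's ergodic theorem then forces the $g_t$-orbit of $\widehat{\mu}$-a.e.\ grid to meet $S$ for an unbounded set of positive times, placing $\widehat{\mu}$-a.e.\ $\Lambda$ outside $B$.

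The second step is to upgrade $\widehat{\mu}(B) = 0$ to the statement that $\{(A,\mathbf{b}) : \Lambda_{A,\mathbf{b}} \in B\}$ has Lebesgue measure zero. Using the local decomposition $\widehat{G}_k = \tilde H \cdot H$ near the identity implicit in \equ{h} and \equ{htilde}, Haar measure factors locally as $d\tilde h\, dh$, where $dh$ pulls back to Lebesgue measure on $M_{m,n} \times \R^m$ via $(A,\mathbf{b}) \mapsto u_{A,\mathbf{b}}$. Pulling $B$ back to a neighborhood of the identity in $\widehat{G}_k$ and applying Fubini yields, for $d\tilde h$-almost every $\tilde h$ in a neighborhood of $e$, that the slice $\{(A,\mathbf{b}) : \tilde h u_{A,\mathbf{b}} \Z^k \in B\}$ has Lebesgue measure zero. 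To convert this generic-$\tilde h$ statement into the statement at the distinguished slice $\tilde h = e$, I would invoke the $g_t$-invariance of $B$ together with the contraction relation \equ{conj}, which shows $\tilde h_t := g_t \tilde h g_{-t} \to e$ as $t \to \infty$ for any $\tilde h$ in a small neighborhood of $e$; combining this with the uniform continuity of $\Delta$ on $\{\Delta \geq z\}$ supplied by Lemma \ref{uc}, one obtains the equivalence $\Lambda \in B \iff \tilde h \Lambda \in B$ for any such $\tilde h$. This identifies the generic slice with the distinguished one and completes the argument.

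The main obstacle is the final upgrade step: the passage from ``Lebesgue-a.e.\ $\tilde h$'' to the specific $\tilde h = e$ cannot be obtained from ergodicity on $\widehat{X}_k$ alone, and requires combining the $g_t$-invariance of $B$ with the contraction of $\tilde H$ by $g_t$ and with uniform continuity of $\Delta$. This is essentially the Fubini argument used in \cite[Theorem 8.7]{KM} in the homogeneous setting, which the authors explicitly flag as a tool to be re-used later in the paper.
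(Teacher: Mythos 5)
Your approach matches the paper's almost step for step: reduce to a dynamical statement via Lemma \ref{corr_c/t}, use ergodicity of the $g_t$-action on $\widehat{X}_k$ and the fact that $\Delta^{-1}\big([z,\infty)\big)$ has nonempty interior to obtain a Haar-null statement, then transfer to Lebesgue measure through the local decomposition $\widehat{G}_k \simeq \tilde H \times H$, using the contraction of $\tilde H$ under $g_t$-conjugation (equation \equ{conj}) together with uniform continuity of $\Delta$ (Lemma \ref{uc}). The paper simply presents this as one proof by contradiction rather than separating the Haar-null step from the Fubini transfer.

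The one place where you overstate the argument is the final transfer. The claimed equivalence $\Lambda\in B \iff \tilde h\Lambda\in B$ for $\tilde h$ near the identity is not available. Lemma \ref{uc} gives uniform continuity of $\Delta$ only up to a quantitative error $\varepsilon>0$, and it applies only on the set $\{\Delta\ge z\}$; combined with the contraction $g_t\tilde h g_{-t}\to e$ it yields a one-sided implication with a loss: for $\tilde h$ in a small enough neighborhood $V$ of the identity in $\tilde H$, if $\Delta(g_t\Lambda)<z$ for all large $t$ then $\Delta(g_t\tilde h\Lambda)<z+1$ for all large $t$, but not the reverse and not with the same threshold. In other words $\tilde h\cdot B_z\subset B_{z+1}$ rather than $\tilde h\cdot B_z=B_z$ (writing $B_{z'}$ for the set defined with threshold $z'$). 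This is easily absorbed, because your ergodicity step already gives $\widehat{\mu}(B_{z'})=0$ for \emph{every} $z'$: the set $\Delta^{-1}\big([z',\infty)\big)$ has nonempty interior for every $z'$. The paper builds in this slack from the start by working with the threshold $\tfrac{\log C}{m+n}+1$, and you should do the same rather than asserting exact invariance of $B$. A minor secondary point: the ergodicity of the $g_t$-action on $\widehat{X}_k$ is true but not an automatic consequence of mixing on $X_k$ plus the semidirect product structure; the paper invokes Moore's ergodicity theorem together with the Brezin--Moore criterion, and some such input is genuinely needed since $\widehat{G}_k$ is not semisimple.
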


\begin{proof} %We may assume $C> 1$. For $\psi({T})=C/{T}$ {and ${t} = \log T$}, the system \equ{ideq} is the same as $$e^{ {t}/m} \|A\vq +\mathbf{b}-\vp \| < C^{1/m} \hspace{10mm} e^{-{t}/n} \| \vq \| < 1~ (< C^{1/m}),$$ 
%{and its solvability in $(\vp,\vq)\in\Z^{m+n}$} implies $$\Delta({{g_t}} \Lambda_{A, \mathbf{b}})< \frac {\log C}m.$$
%Thus $(A,\mathbf{b})\notin \admn(C\psi_1)$ whenever ${{g_t}} \Lambda_{A, \mathbf{b}}$ belongs to   %\eq{superlevel}
%${ \Delta^{-1}\big([\frac {\log C}m,\infty)\big)} $  for an unbounded set of ${t > 0}$.
%
%, which contradicts \equ{blah}. 
%
%Now  s
Suppose {${U}$ is a subset of} $M_{m,n}\times \mathbb{R}^m${
%\color{red}
($\cong H$  as in \equ{h})} {of positive Lebesgue measure}  such that $ \Delta( {{g_t}} \Lambda_{A, \mathbf{b}})< \frac{\log C}{{m+n}}$ for any $(A, \mathbf{b})\in {U}$ and all large enough ${t}$.
Then %, in view of uniform continuity of $\Delta$ and \equ{conj}, 
{there exists a neighborhood $V$ of  identity in $\tilde H$ {%\color{red}
%($\tilde H$ 
as in \equ{htilde}} such that for all $  {g}\in V$,  $(A, \mathbf{b})\in  {U}$ and all large enough ${t}$, %and  large enough $t$,  
\eq{less2}{\Delta( {{ g_t}  {g}}  \Lambda_{A, \mathbf{b}}) = \Delta( {{ g_t}  {g}}g_t^{-1} g_t  \Lambda_{A, \mathbf{b}})< \frac{\log C}{{m+n}}{ +1}.} Indeed, one can use Lemma \ref{uc}   and \equ{conj} to %construct a neighborhood $U$ of the identity in ${\widehat{G}_{k}}$ 
{choose $V$} such that 
if \equ{less2} does not hold for ${{g}\in V}$, then $\big|\Delta( {{ g_t}  {g}}g_t^{-1} g_t  \Lambda_{A, \mathbf{b}}) -
\Delta( {{ g_t}}  \Lambda_{A, \mathbf{b}})\big| < 1$.} But since the product map $\tilde H \times H\to {\widehat{G}_{k}}$ is a %\color{red} 
local diffeomorphism, $V\times {U}$ { is mapped   onto a set of positive measure.} It follows that  $\Delta( {{g_t}}  \Lambda)< {\frac{\log C}{{m+n}} + 1}$ for all large enough $t$ {and} {for a} set of lattices $\Lambda$ of positive {Haar} measure {in ${\widehat{X}_{k}}$}. 

On the other hand from  Moore's ergodicity theorem \cite{Mo} together with the ergodicity criterion of Brezin and Moore { (see \cite[Theorem 6.1]{BM} or \cite[Theorem 6]{Ma})}
%\cite[8.3.1]{Feres} \comm{(I have not seen this book, does it have the statement for non-semisimple groups or should we rather use \cite{K99}?)} 
it follows that every {unbounded} subgroup of $G_k$ -- in particular, $\{{{g_t}} : {t}\in\mathbb{R} \}$ as above -- acts ergodically on  %$\ASL_{m+n}(\mathbb{R})/\ASL_{m+n}(\mathbb{Z})$
${\widehat{X}_k}$. %We claim that every ``super-level set", $\Delta^{-1}[d, \infty)$, has nonempty interior.  
Since for any ${C> 0}$ the   set ${ \Delta^{-1}\big([{\frac{\log C}{{m+n}} + 1},\infty)\big)} $  has a non-empty interior, it follows that {$\mu$-}almost every  $\Lambda\in {\widehat{X}_k}$ {must visit any such set at unbounded times under the action of $g_t$}, a contradiction.
\end{proof}
%\rm
\section{A correspondence between Dirichlet improvability and dynamics}
% on the space of  affine  lattices
} \label{corr}

%The proof of %the special case, 
 %Proposition 
 \noindent {{Lemma \ref{corr_c/t}} relates }{the complement of $\admn(C\psi_0)$ to the set of {grids}} visiting 
 %the  ``target" set $\Delta^{-1}[\frac{1}{m}\log(c), \infty )$ 
{certain} ``target" subsets of ${\widehat{X}_k}$
 at unbounded times under the diagonal flow ${{g_t}}$.  This is the special case where the target does not change with the time parameter ${t}$.  For general non-increasing $\psi$, we get a family of ``shrinking targets" $\Delta^{-1}\big([{z}_\psi(t),\infty)\big)$ (which in fact are shrinking only in a weak sense --- see Remark \ref{quasi1}), where ${z}_\psi$ is gotten by the following change of variables, known as the Dani Correspondence:

\smallskip
\begin{lem}[See Lemma 8.3 in \cite{KM}]\label{dani1} Let positive integers $m,n$  and ${T}_0\in \R_+$ be given. Suppose $\psi : [{T}_0, \infty)\rightarrow \R_+$ is a continuous, non-increasing function. Then there exists a unique continuous function $$z={z}_\psi%^{m,n}
: [{t_0},\infty)\rightarrow \mathbb{R},$$ where ${t_0}:= \frac{m}{m+n}\log {T}_0-\frac{n}{m+n}\log \psi({T}_0)$, such that
\begin{enumerate}
\item the function $ {t}\mapsto {t}+n{z}({t})$ is strictly increasing and unbounded;
\item the function ${t}\mapsto {t}-m{z}({t})$ is nondecreasing;
\item $\psi(e^{{t}+n{z}({t})})=e^{-{t}+m{z}({t})}$ for all ${t}\geq {t_0}$. 
%\hfill $\square$.
\end{enumerate}
\end{lem}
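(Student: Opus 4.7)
The plan is to exploit the fact that conditions (1) and (3) force a specific algebraic relationship between $t$, $z$, and the pair $(T,\psi(T))$ where $T := e^{t+nz(t)}$. First I would observe that if we set $T = e^{t+nz(t)}$, then (1) gives $\log T = t + nz(t)$, while (3) gives $\log\psi(T) = -t + mz(t)$. Adding and subtracting these two linear equations in $(t,z)$ yields the explicit expressions
\[
z = \frac{\log T + \log\psi(T)}{m+n}, \qquad t = \frac{m\log T - n\log\psi(T)}{m+n}.
\]
This is the heart of the proof: the lemma is essentially just a parametric change of variables between $T$ and $t$.

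Next I would set up this change of variables rigorously. Define $\tau : [T_0,\infty) \to \R$ by $\tau(T) := \frac{m\log T - n\log\psi(T)}{m+n}$. Since $\psi$ is continuous and non-increasing, $-\log\psi$ is continuous and non-decreasing, while $m\log T$ is continuous and strictly increasing; hence $\tau$ is continuous and strictly increasing. Moreover $\tau(T_0) = t_0$ by the very definition of $t_0$, and $\tau(T)\to\infty$ as $T\to\infty$. Thus $\tau$ is a homeomorphism from $[T_0,\infty)$ onto $[t_0,\infty)$; let $T(\cdot) := \tau^{-1}(\cdot)$ denote its inverse, and define
\[
z(t) := \frac{\log T(t) + \log\psi(T(t))}{m+n}, \qquad t\ge t_0.
\]
A direct computation then confirms that $t + nz(t) = \log T(t)$ and $t - mz(t) = -\log\psi(T(t))$. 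Property (1) follows because $T(t)$ is strictly increasing and unbounded, property (2) because $T(t)$ is increasing and $\psi$ is non-increasing (so $-\log\psi(T(t))$ is non-decreasing), and property (3) because $\psi(e^{t+nz(t)}) = \psi(T(t)) = e^{-t+mz(t)}$.

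Finally, for uniqueness, I would argue that any $z$ satisfying (1) and (3) must coincide with the above. Indeed, given such a $z$, let $\tilde T(t) := e^{t+nz(t)}$; then (3) reads $\log\psi(\tilde T(t)) = -t + mz(t)$. Eliminating $z(t)$ from the pair $\log\tilde T(t) = t + nz(t)$ and $\log\psi(\tilde T(t)) = -t + mz(t)$ gives $m\log\tilde T(t) - n\log\psi(\tilde T(t)) = (m+n)t$, i.e.\ $\tau(\tilde T(t)) = t$. Since $\tau$ is injective, $\tilde T(t) = T(t)$, and hence $z(t)$ is determined. I do not expect any serious obstacle here: the main care is simply to verify that the monotonicity of $\psi$ (weak) combined with the strict monotonicity of $\log T$ is enough to make $\tau$ a bijection, which it is.
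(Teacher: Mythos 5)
Your proof is correct and is, in essence, the standard argument behind the Dani correspondence change of variables: the paper itself does not prove this lemma but cites \cite[Lemma~8.3]{KM} (up to a sign change), and the argument there is exactly this bijective reparametrization $T\leftrightarrow t$. The key formulas $z=\frac{\log T+\log\psi(T)}{m+n}$ and $t=\tau(T)=\frac{m\log T-n\log\psi(T)}{m+n}$, the observation that $\tau$ is a strictly increasing homeomorphism of $[T_0,\infty)$ onto $[t_0,\infty)$ because $m\log T$ is strictly increasing while $-n\log\psi(T)$ is non-decreasing, and the verification that $t+nz(t)=\log T(t)$ and $t-mz(t)=-\log\psi(T(t))$ all check out. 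One very minor point worth making explicit: in the uniqueness step, writing $\tau(\tilde T(t))=t$ requires $\tilde T(t)\ge T_0$, but this is forced by property (3) itself (which evaluates $\psi$ at $\tilde T(t)$, and $\psi$ is only defined on $[T_0,\infty)$), so the argument is complete; also note your uniqueness argument uses only (3), so property (2) comes along for free as a consequence rather than as an additional constraint to impose.
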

 
{
%\color{red}
 \begin{remark} \rm The function ${z}$ of Lemma \ref{dani1} differs from {the function $r$} of \cite[Lemma 8.3]{KM} by a minus sign. This reflects the difference between the asymptotic and uniform approximation problems. \end{remark}}

\begin{remark}\label{quasi1}\rm For future reference, we point out that properties {\rm (1)} and {\rm (2)} of Lemma \ref{dani1} imply that any ${z = {z}_\psi}$ %is ``quasi-monotonic"
{does not oscillate too wildly. Namely,}
$${z}({{s}})-%1
{\tfrac1m}
\leq {z}(u)\leq {z}({{s}})+%1
{\tfrac1n}
 ~~~~ \textit{whenever} ~~~ s\leq u\leq s+1.$$ \end{remark}

%\medskip

%\begin{remark} Though we will not use this fact, every continuous function satisfying items {\rm (1)} and {\rm (2)}  of Lemma \ref{dani1} has the form ${z}_\psi$:  giiven $m,n\in \N$, and any continuous $r:[{t_0},\infty)\rightarrow \R$ satisfying the first two items of Lemma \ref{dani1}, there exists a unique continuous function $\psi : [{T}_0, \infty)\rightarrow (0,\infty)$, with ${T}_0= e^{{t_0}+n{z}({t_0})}$,  such that the final item holds 
%\emph{[KM, 8.3]}.  
%\cite[Lemma 8.3]{KM}.
%\end{remark}

%{\color{red} The following lemma generalizes the first paragraph of the proof of Theorem \ref{c/t}.}

Now we can state a general version of the correspondence between {the} improvability of {the inhomogeneous Dirichlet theorem} and dynamics on ${\widehat{X}_k}$, generalizing the first paragraph of the proof of Theorem \ref{c/t}.

\begin{lem}\label{inhomogeneousshrinkingtarget} \textit{Let $\psi: [{T}_0,\infty)\rightarrow \R_+$ be a non-increasing  continuous function, and let {$z={z}_\psi$} be the function associated to $\psi$ by Lemma \ref{dani1}. The pair $(A, \mathbf{b})$ is in $\admn(\psi)$ if and only if $\Delta({{g_t}}\Lambda_{A, {b}})<{z}_{\psi}({t})$ for all sufficiently large ${t}$. } \end{lem}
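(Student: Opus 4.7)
The plan is to generalize the calculation carried out in the proof of Lemma \ref{corr_c/t}, using the function $z = z_\psi$ supplied by Lemma \ref{dani1} as a time-dependent ``shrinking target radius.'' The argument is essentially a direct unpacking of definitions combined with the change of variables $T = e^{t + nz(t)}$, and is somewhat mechanical once Lemma \ref{dani1} is available.

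The core observation is pointwise. A generic vector of $\Lambda_{A,\mathbf{b}}$ has the form $(A\vq+\mathbf{b}-\vp,\ \vq)$ with $(\vp,\vq) \in \Z^{m+n}$, and after the application of $g_t$ as defined in \equ{diagflow} it becomes $\bigl(e^{t/m}(A\vq+\mathbf{b}-\vp),\ e^{-t/n}\vq\bigr)$. Its supremum norm is less than $e^{z(t)}$ precisely when both $\|A\vq+\mathbf{b}-\vp\|^m < e^{mz(t) - t}$ and $\|\vq\|^n < e^{nz(t) + t}$ hold. Setting $T := e^{t + nz(t)}$, property (3) of Lemma \ref{dani1} gives $\psi(T) = e^{mz(t) - t}$, so these two inequalities are exactly the system \equ{ideq} at this value of $T$. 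Taking the infimum over $(\vp,\vq) \in \Z^{m+n}$, I obtain that $\Delta(g_t \Lambda_{A,\mathbf{b}}) < z(t)$ is equivalent to the solvability of \equ{ideq} at $T = e^{t+nz(t)}$.

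To promote this pointwise equivalence to the statement of the lemma, I match quantifiers using property (1) of Lemma \ref{dani1}: the map $t \mapsto e^{t+nz(t)}$ is continuous, strictly increasing, and unbounded on $[t_0,\infty)$, hence a homeomorphism onto $[e^{t_0 + nz(t_0)},\infty)$. Consequently, ``\equ{ideq} is solvable for all sufficiently large $T$'' corresponds bijectively to ``$\Delta(g_t \Lambda_{A,\mathbf{b}}) < z(t)$ for all sufficiently large $t$,'' which yields both directions of the lemma.

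I do not anticipate any serious obstacle: the proof is an algebraic unpacking, with property (1) of Lemma \ref{dani1} making the change of variables a bijection and property (3) identifying the correct radius. Property (2) is not needed here; as foreshadowed in Remark \ref{quasi1}, it will be invoked later to control the oscillation of $z_\psi$ across unit time intervals.
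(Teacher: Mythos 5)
Your proof is correct and follows essentially the same route as the paper: unpack the definition of $\Delta(g_t\Lambda_{A,\mathbf{b}})$, apply the change of variables $T = e^{t+nz(t)}$ supplied by property (3) of Lemma \ref{dani1} to match the two inequalities with \equ{ideq}, and then use property (1) (continuity, strict monotonicity, unboundedness of $t\mapsto t+nz(t)$) to reconcile the ``for all sufficiently large $T$'' and ``for all sufficiently large $t$'' quantifiers.
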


\begin{proof} {We argue as in the proof %follows the lines of that \\\
of Lemma \ref{corr_c/t}.} Since ${t} \mapsto{t}+n{z}({t})$ is increasing and unbounded, $(A, \mathbf{b}) \in \admn(\psi)$ if and only if for all large enough ${t}$ we have
$$\|A\vq+\mathbf{b}-\vp\|^m< \psi (e^{{t}+n{z}({t})})=e^{-{t}+m{z}({t})}, \hspace{10mm} \|\vq\|^n < e^{{t}+n{z}({t})}$$
for some $\vq\in \mathbb{Z}^n$, $\vp\in \mathbb{Z}^m$. This is the same as the solvability of
$$e^{{t}/m} \|A\vq+\mathbf{b}-\vp \| < e^{{z}({t})},\hspace{10mm} e^{-{t}/n} \|\vq\| < e^{{z}({t})}, $$ \noindent which is the same as $\Delta({{g_t}} \Lambda_{A,\mathbf{b}})< {z}_\psi({t})$.\end{proof}

Thus a pair fails to be $\psi$-Dirichlet if and only if {the} associated {grid} visits the ``target" $\Delta^{-1}\big([{z}_\psi({t}),\infty)\big)$ at unbounded times ${t}$ under the flow ${{g_t}}$.   This is known as a ``shrinking target  phenomenon." {Our next goal is to recast condition \equ{psisum} using the function ${z}_\psi$:
\begin{lem}\label{dani} Let $\psi : [{T}_0, \infty) \rightarrow \R_+, {T}_0\geq 0$ be a non-increasing continuous function, and {$z=
z_\psi$} the function associated to $\psi$ by Lemma \ref{dani1}. Then we have
$$\sum_{j=\lceil{T}_0\rceil}^\infty \frac{1}{{j}^2\psi({j})}  < \infty ~~ \textit{if and only if} ~~~ \sum_{{t=\lceil t_0 \rceil}}^\infty e^{-(m+n){z}({t})}  < \infty .$$
\end{lem}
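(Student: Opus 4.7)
The plan is to reduce both series to integrals, apply the change of variables $T=e^{t+nz(t)}$ underlying Lemma \ref{dani1}, and compare the resulting Jacobian-weighted integral to the one we want.

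First I would argue that each series is comparable (up to a universal multiplicative constant) to its integral analogue:
$$\sum_{j\geq T_0}\tfrac{1}{j^2\psi(j)}\ \asymp\ \int_{T_0}^\infty\tfrac{dT}{T^2\psi(T)},\qquad \sum_{t\geq t_0}e^{-(m+n)z(t)}\ \asymp\ \int_{t_0}^\infty e^{-(m+n)z(t)}\,dt.$$
The first is standard integral comparison using monotonicity of $\psi$. For the second I would invoke Remark \ref{quasi1}: since $z$ varies by a bounded amount over any unit interval, $e^{-(m+n)z(t)}$ varies by a bounded multiplicative factor, so the sum and integral are comparable.

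Next I substitute $T=e^{t+nz(t)}$, which is a homeomorphism $[t_0,\infty)\to[T_0,\infty)$ by property (1). Property (3) gives $T\psi(T)=e^{(m+n)z(t)}$, while properties (1) and (2) make $z$ locally Lipschitz with $z'\in[-\tfrac1n,\tfrac1m]$ a.e., so $dT/T=(1+nz'(t))\,dt$. Hence
$$\int_{T_0}^\infty\frac{dT}{T^2\psi(T)}\ =\ \int_{t_0}^\infty e^{-(m+n)z(t)}(1+nz'(t))\,dt,$$
and the lemma reduces to showing that $\int_{t_0}^\infty e^{-(m+n)z(t)}\,dt$ and $\int_{t_0}^\infty e^{-(m+n)z(t)}(1+nz'(t))\,dt$ share the same convergence behavior. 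One direction is immediate: by property (2), $1+nz'(t)\leq(m+n)/m$, so convergence of the first implies that of the second.

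For the converse, set $f(t):=e^{-(m+n)z(t)}$; since $f'=-(m+n)z'f$ a.e.\ and $f$ is absolutely continuous, integration by parts yields
$$\int_{t_0}^T f\,(1+nz')\,dt\ =\ \int_{t_0}^T f\,dt\ -\ \tfrac{n}{m+n}\bigl(f(T)-f(t_0)\bigr).$$
If the left side stays bounded as $T\to\infty$ while $\int_{t_0}^T f\,dt\to\infty$, the identity forces $f(T)\to\infty$. But then for any $M$ and all sufficiently large $T$,
$$\int_T^\infty f\,(1+nz')\,dt\ \ge\ M\int_T^\infty(1+nz')\,dt\ =\ M\bigl(v(\infty)-v(T)\bigr)\ =\ \infty,$$
where $v(t):=t+nz(t)$ is unbounded by property (1) --- contradicting the assumed boundedness. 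The main obstacle is precisely this reverse step: the Jacobian $1+nz'(t)$ has no positive lower bound (it can be arbitrarily close to zero), so no pointwise comparison is available. The resolution is global: the divergence of $\int(1+nz')\,dt$, combined with the integration-by-parts identity, forces $f$ to stay bounded whenever the weighted integral converges, after which convergence of $\int f\,dt$ follows from the same identity.
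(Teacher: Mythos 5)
Your proof is correct and follows essentially the same route as the paper: both pass from sums to integrals via monotonicity and Remark~\ref{quasi1}, apply the change of variables $T=e^{t+nz(t)}$ (the paper works in the logarithmic variable $\lambda=\log T$), and arrive at exactly the identity $\int_{t_0}^T f(1+nz')\,dt=\int_{t_0}^T f\,dt-\tfrac{n}{m+n}\bigl(f(T)-f(t_0)\bigr)$, which is the paper's integration-by-parts formula written in $t$-coordinates. Your contradiction argument in the reverse direction (deducing $f(T)\to\infty$ and then using the unboundedness of $v(t)=t+nz(t)$ to blow up the weighted integral) is a more explicit rendering of what the paper compresses into the remark that $e^{P(\lambda)-\lambda}$ would have to be eventually increasing, but the mechanism is the same.
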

\begin{proof} We follow the lines of the proof of  \cite[Lemma 8.3]{KM}.
Using the monotonicity of $\psi$ and Remark \ref{quasi1}, we may replace the sums with integrals $$\int_{{T}_0}^\infty {x}^{-2} \psi({x})^{-1}\,d{x}\quad\text{  and }\quad\int_{{t_0}}^\infty e^{-(m+n){z}({t})}\,d{t}$$ respectively.  Define $$P:= -\log\circ \psi\circ \exp: [{T}_0,\infty) \rightarrow \R\quad\text{  and }\quad\lambda(t):= {t}+n{z}({t}).$$ Since $\psi(e^\lambda) =e^{-P(\lambda)}$, we have
$$\int_{{T}_0}^\infty {x}^{-2} \psi({x})^{-1}\,d{x}=\int_{\log {T}_0 }^\infty \psi(e^\lambda)^{-1}e^{-\lambda}d\lambda= \int_{\log {T}_0}^\infty e^{P(\lambda)-\lambda}\,d\lambda.$$ Using $P\big(\lambda({t})\big)= {t}-m{z}({t})$, we also have \begin{equation*}\begin{split}
\int_{{t_0}}^\infty e^{-(m+n){z}({t})}\,d{t} &= \int_{\log{T}_0}^\infty e^{-(m+n){z}\left(\frac{m\lambda}{m+n}+\frac{nP(\lambda)}{m+n}\right)}\,d\left[\frac{m}{m+n}\lambda +\frac{n}{m+n}P(\lambda)\right]\\
&=\frac{m}{m+n}\int_{\log{T}_0}^\infty e^{P(\lambda)-\lambda} \,d\lambda +\frac{n}{m+n}\int_{\log{T}_0}^\infty e^{-\lambda} e^{P(\lambda)}\,dP(\lambda)\\
=\frac{m}{m+n}\int_{\log{T}_0}^\infty e^{P(\lambda)-\lambda} \,d\lambda &+\frac{n}{m+n}\int_{\log{T}_0}^\infty e^{P(\lambda)-\lambda} \,d\lambda + \frac{n}{m+n}\left(\lim_{\lambda\rightarrow \infty} e^{P(\lambda)-\lambda}-1\right),\end{split}\end{equation*}
 where we integrated by parts in the last line.  Since all these quantities (aside from the constant $-1$) are positive, the convergence of $\int_{{t_0}}^\infty e^{-(m+n){z}({t})} \,d{t}$ implies the convergence of $\int_{\log{T}_0}^\infty e^{P(\lambda)-\lambda}\,d\lambda$.  Conversely, suppose $\int_{\log{T}_0}^\infty e^{P(\lambda)-\lambda}\,d\lambda$ converges, yet $\int_{{t_0}}^\infty e^{-(m+n){z}({t})}\,d{t}$ diverges.  Then since $u\mapsto \int_{{t_0}}^u e^{(m+n){z}({t})}\,d{t}$ is increasing in $u$, and the first two terms of the sum above converge, we must have $e^{P(\lambda)-\lambda}$ eventually increasing in $\lambda$ (recall that $\lambda$ is an increasing and unbounded function). But this contradicts the convergence of $\int_{\log({T}_0)}^\infty e^{P(\lambda)-\lambda}\,d\lambda$.
\end{proof}}

{Now we are ready to reduce Theorem \ref{mainthm} to the following statement concerning dynamics on ${\widehat{X}_k}$:}

{\begin{thm}\label{dynamicalmainthm} Fix $k\in \N$ and let $\{g_t: t\in\R\}$ be a diagonalizable {unbounded} one-parameter subgroup of ${{G_{k}}}$. Also take an arbitrary sequence $\{{z}(t): t\in \N\}$ of real numbers. Then the set
\eq{limsupset}{\big\{\Lambda\in {\widehat{X}_k}: \Delta({g_t}\Lambda)\ge {z}(t)\text{ for infinitely many }t\in\N\big\}}
%has measure $1$ 
is null (resp.\ conull) if the sum
\eq{deltasum}{
\sum_{t=1}^\infty e^{-k{z}(t)}
%\mu\Big(\Delta^{-1}\big([{z}(t),\infty)\big)\Big)
}
converges (resp.\ diverges).
\end{thm}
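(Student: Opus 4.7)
The plan is to regard the set in \equ{limsupset} as $\limsup_{t\in\N} E_t$, where $E_t := g_{-t}\Delta^{-1}\bigl([z(t),\infty)\bigr)$; since $g_t$ preserves $\widehat\mu$, one has $\widehat\mu(E_t) = \widehat\mu\bigl(\Delta^{-1}([z(t),\infty))\bigr)$. Both halves of the dichotomy should then reduce to Borel--Cantelli-type lemmas once the two-sided measure estimate
\[
c\,e^{-kz} \;\leq\; \widehat\mu\bigl(\Delta^{-1}([z,\infty))\bigr) \;\leq\; C\,e^{-kz} \qquad (z \geq z_0)
\]
is established. For the upper bound I would combine the Siegel integral formula on $\widehat{X}_k$ (the integral of the Siegel transform $\tilde f(\Lambda+w) := \sum_{v\in\Lambda+w} f(v)$ equals $\int_{\R^k} f$) with a transference argument: a grid with $\Delta\geq z$ forces the underlying lattice to have covering radius $> e^z$, so by transference the dual lattice contains a vector of norm $< k e^{-z}$, and Mahler's compactness criterion bounds the measure of such lattices by $O(e^{-kz})$. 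The matching lower bound comes from a small $\widehat G_k$-neighborhood of the explicit grid $\diag(1,\dots,1,\tfrac14 e^{-z},4e^z)\Z^k + (0,\dots,0,2e^z)$ highlighted just before Lemma \ref{corr_c/t}: this neighborhood sits inside $\Delta^{-1}([z-O(1),\infty))$ and has $\widehat\mu$-measure $\geq c\, e^{-kz}$.

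The convergence direction is then immediate from the first Borel--Cantelli lemma: $\sum_t \widehat\mu(E_t) \leq C\sum_t e^{-kz(t)} < \infty$ forces $\widehat\mu(\limsup E_t)=0$.

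The divergence direction requires a quasi-independence Borel--Cantelli lemma (Chung--Erd\H{o}s / Sprind\v{z}uk--Sullivan), whose hypothesis amounts to controlling the pairwise correlations. For $s<t$, the $g_s$-invariance of $\widehat\mu$ rewrites
\[
\widehat\mu(E_s\cap E_t) = \int_{\widehat X_k} \mathbf 1_{\Delta\geq z(s)}(y)\, \mathbf 1_{\Delta\geq z(t)}(g_{t-s}y)\, d\widehat\mu(y),
\]
a correlation of characteristic functions of $\Delta$-superlevel sets under $g_{t-s}$. Exponential mixing of $\{g_t\}$ on $\widehat X_k$ --- valid under our hypotheses by Howe--Moore together with effective rates of Kleinbock--Margulis type --- will give $\widehat\mu(E_s\cap E_t) = \widehat\mu(E_s)\widehat\mu(E_t) + O(e^{-\lambda(t-s)})$ once the indicators are approximated by smooth compactly supported bumps. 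The smoothing is feasible because Lemma \ref{uc} furnishes uniform continuity of $\Delta$ on each of its superlevel sets. Combining this with the lower measure bound verifies the quasi-independence hypothesis and produces $\widehat\mu(\limsup E_t)>0$. To upgrade positive measure to full measure I would observe that $\limsup E_t$ is, up to null sets, invariant under the contracting horospherical subgroup $\tilde H$ of \equ{htilde}: for $\tilde h\in\tilde H$ close to the identity, $g_t \tilde h g_{-t}\to \mathrm{id}$ as $t\to\infty$ by \equ{conj}, so Lemma \ref{uc} yields $|\Delta(g_t\tilde h x)-\Delta(g_t x)|\to 0$ uniformly on each superlevel set; ergodicity of the $\tilde H$-action on $\widehat X_k$ (Moore's theorem with the Brezin--Moore criterion, as in the proof of Proposition \ref{c/t}) then promotes the measure to $1$.

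The principal technical obstacle is the smoothing step in the divergence direction. The target sets $\Delta^{-1}([z(t),\infty))$ are non-compact and escape to infinity in $\widehat X_k$ when $z(t)\to\infty$, so one must approximate their indicators by smooth bumps while tracking how the relevant Sobolev norms depend on $z(t)$, and calibrate the smoothing scale against both the mixing rate $e^{-\lambda(t-s)}$ and the small measure $e^{-kz(t)}$ so that the quasi-independence constant remains uniform as $N\to\infty$, regardless of how erratically the levels $z(t)$ behave. Once this smoothing is set up carefully, the rest of the argument is routine.
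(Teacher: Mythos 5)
Your overall scaffolding matches the paper's: establish a two-sided estimate $c\,e^{-kz}\leq \Phi_\Delta(z)\leq C\,e^{-kz}$, then feed it into a dynamical Borel--Cantelli argument powered by exponential mixing. The paper, however, quotes the dynamical Borel--Cantelli lemma wholesale from Kleinbock--Margulis (Theorem \ref{thm4.3}, via the second-moment / Sprind\v{z}uk method, which directly yields a zero-one law), obtains the upper measure bound from Athreya's random Minkowski theorem on $\widehat{X}_k$, and obtains the lower bound through an explicit Iwasawa/Siegel-set integral. Your substitutes --- Siegel transform plus transference on $X_k$ for the upper bound, an explicit cusp grid for the lower bound, Chung--Erd\H{o}s quasi-independence for positive measure, and a $\tilde H$-ergodicity upgrade to full measure --- are genuinely different in method, and two of them have real gaps.

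The first gap is the lower bound. Pointing at the grid $\diag(1,\dots,1,\tfrac14 e^{-z}, 4 e^z)\Z^k + (0,\dots,0,2e^z)$ and asserting that a small fixed $\widehat{G}_k$-neighborhood of it has $\widehat\mu$-measure $\geq c\,e^{-kz}$ does not come for free. That grid lies deep in the cusp; the underlying lattice has a vector of length $\asymp e^{-z}$, so the injectivity radius of $\widehat{X}_k$ at this point is $\asymp e^{-z}$, and the orbit map $g\mapsto g\Lambda_z$ is far from injective on any fixed ball in $\widehat{G}_k$. The pushforward measure of such a ball is therefore much smaller than its Haar measure, and pinning down its order of magnitude is exactly the computation the paper performs in \S\ref{deltadl} with the Iwasawa decomposition and the integral over the Siegel set. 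Without something equivalent to that computation, the claim $\widehat\mu(\text{neighborhood})\gtrsim e^{-kz}$ is unjustified. (A smaller issue on the other side: Mahler's criterion is qualitative and does not by itself give $O(e^{-kz})$; you need Siegel's mean value theorem for that step. This is fixable, but worth noting.)

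The second gap is the upgrade from positive measure to full measure via $\tilde H$-ergodicity. Two problems: first, $\tilde H$ as defined in \equ{htilde} is \emph{not} the contracting horospherical subgroup; from \equ{conj} you can see that the $P,Q$ blocks commute with $g_t$ and are not contracted, so $g_t\tilde h g_{-t}\not\to\mathrm{id}$ for generic $\tilde h\in\tilde H$. Only the subgroup with $P=I_m$, $Q=I_n$ is strictly contracted. Second, and more seriously, even restricting to the strictly contracting part, what Lemma \ref{uc} gives you is \emph{approximate} invariance: for $x$ with $\Delta(g_tx)\geq z(t)$ infinitely often, you get $\Delta(g_t\tilde h x)\geq z(t)-\epsilon_t(\tilde h)$ with $\epsilon_t\to 0$, i.e.\ $\tilde h(\limsup E_t)$ lands in a slightly larger limsup set, not back in $\limsup E_t$ up to null sets. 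The ergodicity argument requires $\widehat\mu\bigl(\tilde h(\limsup E_t)\,\triangle\,\limsup E_t\bigr)=0$, and that symmetric-difference estimate is not established by this reasoning; reconciling the two limsup sets would itself require a quantitative measure comparison controlled by the DL property, which threatens circularity. The paper avoids all this by using a Borel--Cantelli lemma (Sprind\v{z}uk's second-moment argument, cited from \cite{KM,KMapp}) that produces the zero-one dichotomy directly without a separate ergodicity step. If you insist on the Chung--Erd\H{o}s route, you would need to make the invariance argument precise, which is a nontrivial addition, not a routine step.
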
}

 %Here are the appropriate generalities for the shrinking target problem set up by Lemma \ref{inhomogeneousshrinkingtarget}.

\begin{proof}[Proof of Theorem \ref{mainthm} assuming Theorem \ref{dynamicalmainthm}]
{Suppose that the series \equ{psisum} converges, and take ${z}(t) = {z}_\psi(t)$, the function associated to $\psi$ by Lemma \ref{dani1}.  In view of Lemma \ref{dani}, the series  \equ{deltasum} converges as well. In particular, it follows that ${z}(t) \ge 0$ for {all} large enough $t\in \N$, and also that 
%\eq{deltasumconv}{
$\sum_{t=1}^\infty e^{-k\left({z}(t) - C\right)}< \infty$
%\mu\Big(\Delta^{-1}\big([{z}(t),\infty)\big)\Big)
%}
for any $C > 0$. Take $g_t$ as in \equ{diagflow}; Theorem \ref{dynamicalmainthm} then implies that  
\eq{measure0}{%\text{the set }
{\widehat{\mu}}\left(\big\{\Lambda\in {\widehat{X}_k}: \Delta({g_t}\Lambda)\ge {z}(t) - C\text{ for infinitely many }t\in\N\big\}\right) = 0.
%\text{ has measure zero.}
}
%Our goal is to show that the set $\admn(\psi)$ has full measure.
Suppose   that the Lebesgue measure of $\admn(\psi)^c$  is positive.
Lemma \ref{inhomogeneousshrinkingtarget}  asserts that  there exists  a set $U$ of positive measure consisting of pairs $(A, {\vb})$ for which $\Delta({{g_t}}\Lambda_{A, {\vb}})\ge {z}({t})$ for an unbounded  set of ${t} \ge 0$. {Then, using $z(t)\geq 0$ and Lemma \ref{uc},} we can replace $t$ with its integer part: %we will have 
$$\Delta({{g_{\lfloor t\rfloor}}\Lambda_{A, {\vb}}}) = \Delta({g_{\left(\lfloor t\rfloor - t\right)}}g_t\Lambda_{A, {\vb}})\ge \Delta({{g_t}}\Lambda_{A, {\vb}}) - c \ge {z}({t})- c \ge {z}({\lfloor t\rfloor})-c - 1/m,$$
{where $c$ is a positive constant and} the last inequality follows from Remark  \ref{quasi1}. Therefore we get
$\Delta({{g_t}}\Lambda_{A, {\vb}})\ge {z}({t})- {c-\frac1m}$ for an unbounded  set of ${t} \in \N$ as long as $(A, {\vb})\in U$.}

{Now recall the groups $H$ and $\tilde H$ from equations \equ{h} and \equ{htilde}.  As in the proof of Proposition \ref{c/t}, we may identify $U$ with a subset of $H$ and, using the uniform continuity of $\Delta$ (Lemma~\ref{uc}){,} find a neighborhood of identity $V \subset \tilde H$ such that for all ${g}\in V$ and  $(A, \mathbf{b})\in U$ %and  large enough $t$,  
$$\Delta( {{ g_t}{g}}  \Lambda_{A, \mathbf{b}}) = \Delta( {{ g_t}{g}}g_t^{-1} g_t  \Lambda_{A, \mathbf{b}})\ge  \Delta( {{ g_t}}  \Lambda_{A, \mathbf{b}})-1$$ for all  $t\ge 0$, hence $\Delta({{g_t}}{g}\Lambda_{A, {\vb}})\ge {z}({t})- 1-c-\frac1m$ for an unbounded  set of ${t} \in \N$. Since the product map $\tilde H \times H\to {\widehat{G}_{k}}$ is a local diffeomorphism, the image of $V\times U$ is a set of positive measure in $G_k$, contradicting \equ{measure0}.}

{The proof of the divergence case proceeds along the same lines. 
If \equ{psisum} diverges, by Lemma \ref{dani} so does  \equ{deltasum}. Define ${z'}(t) := \max\big({z}(t),0\big)$, then we have
$\sum_{t=1}^\infty e^{-k\left({z'}(t) \right)}= \infty$ as well, therefore
%\eq{deltasumdiv}{
$\sum_{t=1}^\infty e^{-k\left({z'}(t) + C\right)}= \infty$
%\mu\Big(\Delta^{-1}\big([{z}(t),\infty)\big)\Big)
%}
for any $C > 0$. In view of
 Theorem \ref{dynamicalmainthm},  
\eq{measure1}{\text{the set }\big\{\Lambda\in {\widehat{X}_k}: \Delta({g_t}\Lambda)\ge {z'}(t) + C\text{ for infinitely many }t\in\N\big\}\text{ has full measure.}}
Now assume  that the set $\admn(\psi)$  has positive measure. Then one can, using Lemma \ref{inhomogeneousshrinkingtarget}, choose  a set $U$ of positive measure consisting of pairs $(A, {\vb})$ for which $$\Delta({{g_t}}\Lambda_{A, {\vb}})<  {z}({t})\le {z'}(t)$$ for all large enough ${t}$. Then, as before, using Lemma \ref{uc} with $z=0$  and \equ{conj}, one finds a neighborhood of identity $V \subset \tilde H$ such that for all ${g}\in V$ and  $(A, \mathbf{b})\in U$,
% and  large enough $t$,  
$$\Delta( {{ g_t}{g}}  \Lambda_{A, \mathbf{b}}) = \Delta( {{ g_t}{g}}g_t^{-1} g_t  \Lambda_{A, \mathbf{b}}) < \max\big( \Delta( {{ g_t}}  \Lambda_{A, \mathbf{b}}),0\ )+1$$ for all  $t\ge 0$; hence $\Delta({{g_t}}{g}\Lambda_{A, {\vb}}) < {z'}({t})+ 1$ for all large enough $t$. Again using the local product structure of  $ {\widehat{G}_{k}}$, one concludes that  the image of $V\times U$ in $ {\widehat{X}_k}$ is a set of positive measure, contradicting \equ{measure1}.}\end{proof}
\medskip

\ignore{It is clear  that the convergence part {of the above theorem} is immediate from the classical Borel-Cantelli lemma, and the only challenge is to prove the divergence part. In the next section we will deduce the above theorem from one of the main results of \cite{KM, KMapp}.
% (presented in a more general form in \cite{KMapp}) of a more general result, essentially taken from  which  we will prove in the Appendix.} 
%
{In {addition to using}  the correspondence of Lemma \ref{inhomogeneousshrinkingtarget}, 
%one can see that  in order to derive Theorem \ref{mainthm} from 
{for the reduction of Theorem \ref{dynamicalmainthm} to Theorem \ref{mainthm}} it remains to:
\begin{itemize}
\item pass from almost all $\Lambda\in {\widehat{X}_k}$ to almost all $\Lambda$ of the form $\Lambda_{A,\mathbf b}$, similarly to what is done in the proof of Proposition \ref{c/t};
\item pass from arbitrary $t > 0$ to $t\in \N$ (this can be achieved using Remark \ref{quasi1});
\item and, finally, show that the series \equ{deltasum} converges/diverges if and only if so does \equ{psisum}. 
\end{itemize}
This will all be accomplished in \S\ref{mainproof}.
}}

{We are now left with the task of proving Theorem \ref{dynamicalmainthm}. The proof will have two ingredients. In the next section we will establish a dynamical Borel-Cantelli Lemma (Theorem \ref{thm4.3}) showing that the limsup set
\equ{limsupset}
is null or conull according to the convergence or divergence of the  series
\eq{pdsum}{
\sum_{t=1}^\infty %e^{-k{z}(t)}
{\widehat{\mu}}\big(\{\Lambda \in {\widehat{X}}_k: \Delta(\Lambda) \ge  {z}(t)\}\big).
}
The proof is based on the methods of \cite{KM, KMapp}; namely, it uses the exponential mixing of the $g_t$-action on ${\widehat{X}}_k$, as well as {the} so-called DL property of $\Delta$. The latter will be established in \S \ref{deltadl}. Moreover, there  we will relate \equ{deltasum} and \equ{pdsum} by showing that the %measure of $\Delta^{-1}\big([z,\infty)\big)$ is 
summands in \equ{pdsum} are {equal to $e^{-k{z}(t)}$ up to a constant} (Theorem \ref{thmDL}).}

\section{{A general dynamical Borel-Cantelli lemma and exponential mixing}} \label{section}
\noindent In this section we let $G$ be a %connected semisimple 
Lie
group  %without compact factors 
and  $\Gamma$ %an irreducible
a lattice in $G$. Denote by $X$ the \hs\ $\ggm$ and by $\mu$ the $G$-invariant probability measure on {$X$.}
%, and by $m$ the corresponding Haar measure on $G$. 
In what follows, $\|\cdot\|_p$ will stand for the $L^p$-norm. Fix a basis $\{Y_1,\dots,Y_n\}$ 
for the Lie algebra
$\frak g$
of
$G$, and, given a smooth
 function $h \in C^\infty(X)$ and $\ell\in\Z_+$, define the ``{\sl $L^2$, order $\ell$" Sobolev norm} $\|h \|_{2,\ell}$ of $h $ by 
 $$
 \|h \|_{2,\ell} \df \sum_{|\alpha| \le \ell}\|D^\alpha h \|_2,
 $$
 where 
 %$\|\cdot\|_2$ stands for the norm in $L^2(X,\mu)$, 
 $\alpha = (\alpha_1,\dots,\alpha_n)$ is a multiindex, $|\alpha| = \sum_{i=1}^n\alpha_i$, and $D^\alpha$ is a differential operator of order $|\alpha|$ which is a monomial in  $Y_1,\dots, Y_n$, namely $D^\alpha = Y_1^{\alpha_1}\cdots Y_n^{\alpha_n}$.
%\end{defn}
This definition depends on the basis,
{however} a change of basis
would only  distort
$ \|h \|_{2,\ell}$
by a bounded factor. We also let 
$$C^\infty_2(X) = \{h \in C^\infty(X): \|h \|_{2,\ell} < \infty\text{ for any }\ell \in \Z_+\}.$$
Fix  a right-invariant
Riemannian metric on $G$ and the corresponding metric `dist' on $X$. For $g\in G$, let us denote by $\|g\|$ the
distance between $g\in G$ and the identity element of $G$. Note that $\|g\| = \|g^{-1}\|$ due to the right-invariance of the metric.
%Let $F$ be a subgroup of $G$. 
{\begin{defn}\label{expmix} Let $L$ be a subgroup of $G$. Say  that the $L$-action on  $X$ is {\sl exponentially
mixing} 
%(also known as having a spectral gap) 
if there exist   $\gamma ,E > 0$ and $\ell\in\Z_+$
such that for any   $\varphi, \psi \in C^\infty_2(X)$  and for any $g\in {L}$ one has
\begin{equation*}\label{em}\tag{EM}
\left| \langle g\varphi ,\psi \rangle - \int_X\varphi\,d\mu \int_X\psi\,d\mu  \right| \le E{e^{ - \gamma \|g\|}\left\| \varphi  \right\|_{2,\ell}} {\left\| \psi  \right\|_{2,\ell}}\,.\end{equation*}
Here $\langle \cdot ,\cdot \rangle$ stands for the inner product in $L^2(X,\mu)$.\end{defn}}

We also need two more definitions from \cite{KM, KMapp}. 
{\begin{defn}\label{expdiv}A sequence of elements $\{f_t: t\in\N\}$ of elements of $G$ is called  {\sl exponentially divergent} if 
\begin{equation}\label{ed}%\tag{ED}
\sup_{t\in\N}\sum_{s=1}^\infty e^{-\gamma\|f_sf_t^{-1}\|} < \infty \quad
\forall\,\gamma > 0.
\end{equation}\end{defn}}

 Now let $\Delta$ be a {real-valued} function on $X$, and for $z \in\R$ denote 
$$\pd(z) \df\mu\left(\Delta^{-1}\big([z,\infty)\big)\right).$$ 
%by $A(z)$ its super-level set, 
%$$
%A(z) \df \Delta^{-1}\big([z,\infty)\big).
%$$
{\begin{defn}\label{defdl}Say that $\Delta$ is {\sl DL\/} (an abbreviation for
``distance-like'') if {there exists $z_0 \in\R$ such that {$\pd(z_0) > 0$ and}
\begin{itemize}
\item[\rm (a)] $\Delta$ is  uniformly continuous on $\Delta^{-1}\big([z_0,\infty)\big)$; that is,    $\forall\,\vre>0$ there exists a neighborhood $U$ of  identity in ${G}$ such that   for any $x\in X$ with $\Delta(x) \ge z_0$,
$$  g\in U \quad\Longrightarrow \quad|\Delta(x)  - \Delta(gx) | < \vre;$$
%(in particular, bounded on compact subsets of $X$), and 
\item[\rm (b)] the function $\pd$ %$$z\mapsto \mu\big(A(z)\big)
%\pd(z)\df \mu\left(\Delta^{-1}\big([z,\infty)\big)\right)
%$$
%$$
%\Delta\text{ is uniformly continuous},  \tag DL1
%$$
%and 
%$\td$ 
does not decrease very fast; more precisely,  
\begin{equation}\label{dl}%\tag{DL}
\exists\, c,\delta > 0%{\text{ and } z_0\in\R} 
\text{ such that } %\mu\big(A(z + \delta)\big) 
{\pd(z ) \ge c  \pd(z - \delta)}
%\mu\big(A(z)\big)
\ \ \ \forall\, z\ge {z_0}.
\end{equation}
\end{itemize}}\end{defn}}
%For $k > 0$, we will also say that  $\Delta$ is {\sl $k$-DL\/}
%if it is  uniformly continuous and in addition 
%\begin{equation}\label{kdl}\tag{$k$-DL}
%\exists\,C_1,C_2>0\text{ such that } C_1e^{-kz} \le \pd(z)\le C_2e^{-kz} \quad\forall\,z\in\R\,.
%\end{equation}
\ignore{Now let $\Delta$ be a real-valued nonnegative function on $X$, and for $z\ge 0$ denote 
$$\pd(z) \df\mu\left(\Delta^{-1}\big([z,\infty)\big)\right).$$ 
%by $A(z)$ its super-level set, 
%$$
%A(z) \df \Delta^{-1}\big([z,\infty)\big).
%$$
Say that $\Delta$ is {\sl DL\/} (an abbreviation for
``distance-like'') if it is  uniformly continuous {\color{red} on super-level sets $\Delta^{-1}[z,\infty)$} (in particular, bounded on compact subsets of $X$), and the function $\Phi_\Delta$ %$$z\mapsto \mu\big(A(z)\big)
%\pd(z)\df \mu\left(\Delta^{-1}\big([z,\infty)\big)\right)
%$$
%$$
%\Delta\text{ is uniformly continuous},  \tag DL1
%$$
%and 
%$\td$ 
does not decrease very fast, more precisely, if
\begin{equation}\label{dl}\tag{DL}
\exists\, c, \epsilon > 0%\text{ and } 0 < c \le  1 
\text{ such that } %\mu\big(A(z + \delta)\big) 
\Phi_\Delta(z + \epsilon) \ge c  \Phi_\Delta(z)
%\mu\big(A(z)\big)
\ \forall\, z\in\R.
\end{equation}}
%For $k > 0$, we will also say that  $\Delta$ is {\sl $k$-DL\/}
%if it is  uniformly continuous and in addition 
%\begin{equation}\label{kdl}\tag{$k$-DL}
%\exists\,C_1,C_2>0\text{ such that } C_1e^{-kz} \le \pd(z)\le C_2e^{-kz} \quad\forall\,z\ge 0\,.
%\end{equation}
%It is easy to see that \eqref{kdl} implies \eqref{dl}.

The next theorem is a %slightly more general version of  \cite[Theorem 4.3]{KM}:
{direct consequence} of \cite[Theorem 1.3]{KMapp}:

\begin{thm}\label{thm4.3} 
Suppose that %$X$ is  not compact and 
the {action of a subgroup $L \subset G$} on $X$ is exponentially mixing.
Let $\{f_{t} : {t}\in \N\}$ be a sequence of elements of {$L$} satisfying \eqref{ed},    and let  $\Delta$ be a DL function on $X$. Also let $\{{z}({t}): t\in\N\}$ be
a sequence of %nonnegative {(Theorem \ref{dynamicalmainthm} uses arbitrary real numbers.  Does the erratum you published with Margulis state it for arbitrary reals?  In any case it's clear that it doesn't matter, I just wasn't sure whether to change this statement or remark that it doesn't matter)}   
{real} numbers. %such that  
Then {the set
\eq{limsupsetgeneral}{\big\{\Lambda\in {X}: \Delta({g_t}\Lambda)\ge {z}(t)\text{ for infinitely many }t\in\N\big\}}
%has measure $1$ 
is null (resp.\ conull) if the sum
\eq{div}{
{ \sum_{{t}=1}^{\infty} \Phi_\Delta\big(z(t)\big)}}%\tag 4.5
%{\rm (1.6b)} 
converges (resp.\ diverges).}
%Then 
%for %some positive 
%$c \le 1$ and for  almost
%all $x\in X$ one has
%$$
% c \le \liminf_{N\to\infty}\frac {\#\{ 1 \le {t}\le N\mid \Delta(f_{t}x)
%\ge {z}({t})\}}  {\sum_{t=1}^{N} \Phi_\Delta\big({z}({t})\big)} \le \limsup_{N\to\infty}\frac {\#\{ 1 \le {t}\le N\mid \Delta(f_{t}x)
%\ge {z}({t})\}}  {\sum_{t=1}^{N} \Phi_\Delta\big({z}({t})\big)} \le \frac1c\,.
%$$
%almost all $x\in X$ one has $\Delta(f_tx) \ge {z}(t)$ for infinitely many $t\in\N$. 
\end{thm}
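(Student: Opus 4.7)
I would split the argument into the two directions. The convergence direction is immediate: by the $G$-invariance of $\mu$,
$$\mu\big(\{\Lambda \in X : \Delta(f_t\Lambda) \ge z(t)\}\big) = \mu\big(\Delta^{-1}([z(t),\infty))\big) = \Phi_\Delta\big(z(t)\big),$$
so when \equ{div} converges the classical Borel--Cantelli lemma gives that the set \equ{limsupsetgeneral} has $\mu$-measure zero. No mixing, and no DL property, is needed here.

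For the divergence direction my plan is to derive the statement as a direct application of \cite[Theorem 1.3]{KMapp}, by checking that the present hypotheses are precisely those of the cited result. Namely, the action of $L\subset G$ on $X=G/\Gamma$ is exponentially mixing by assumption, the sequence $\{f_t\}\subset L$ is exponentially divergent by \eqref{ed}, the function $\Delta$ is DL in the sense of Definition \ref{defdl}, and the series $\sum_t \mu\big(\Delta^{-1}([z(t),\infty))\big)$ is exactly \equ{div} and is assumed to diverge. The cited theorem then yields that $\mu$-a.e.\ $\Lambda\in X$ lies in $f_t^{-1}\Delta^{-1}([z(t),\infty))$ for infinitely many $t$, which is precisely the conullness of \equ{limsupsetgeneral}.

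To orient the reader I would briefly recall the mechanism of \cite[Theorem 1.3]{KMapp}. One approximates each indicator $\mathbf{1}_{B_t}$ of $B_t := \Delta^{-1}([z(t),\infty))$ from above and below by smooth functions $\varphi_t^\pm \in C^\infty_2(X)$, via mollification on a scale $\vre_t$; the uniform-continuity clause (a) of Definition \ref{defdl} ensures that the mollification error is confined to a thin shell, while clause (b) (i.e.\ \eqref{dl}) guarantees $\|\varphi_t^\pm\|_2^2 \asymp \Phi_\Delta(z(t))$. Feeding $\varphi = \varphi_s^\pm$, $\psi = \varphi_t^\pm$ and $g = f_s f_t^{-1}$ into \eqref{em} produces correlation estimates of the form
$$\big|\mu\big(f_s^{-1}B_s \cap f_t^{-1}B_t\big) - \Phi_\Delta(z(s))\Phi_\Delta(z(t))\big| \le C\, e^{-\gamma \|f_s f_t^{-1}\|}\|\varphi_s^\pm\|_{2,\ell}\|\varphi_t^\pm\|_{2,\ell} + (\text{smoothing error}),$$
whose double summation in $s,t$ is controlled by \eqref{ed}. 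A second-moment (Sprindzuk-type) Borel--Cantelli lemma then converts divergence of $\sum_t \Phi_\Delta(z(t))$ into $\mu$-a.e.\ infinite visiting.

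The main obstacle, entirely absorbed inside the cited theorem, is to balance the Sobolev norms $\|\varphi_t^\pm\|_{2,\ell}$ (which blow up like a negative power of $\vre_t$) against the smoothing error (which shrinks like a positive power of $\vre_t$) while preserving the comparison $\|\varphi_t^\pm\|_2^2 \asymp \Phi_\Delta(z(t))$. This tension is exactly what forces the DL property to bundle clauses (a) and (b) of Definition \ref{defdl} together, and \eqref{ed} then guarantees that, after summation in $s$, the correlation error remains of lower order than the diagonal contribution $\big(\sum_t \Phi_\Delta(z(t))\big)^2$.
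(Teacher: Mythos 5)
Your argument matches the paper's: the convergence half is the classical Borel--Cantelli lemma (no mixing or DL needed), and the divergence half is a direct application of \cite[Theorem 1.3]{KMapp}, whose mechanism you accurately sketch. The one remark worth adding, which the paper makes explicitly and you elide when saying the hypotheses are ``precisely those of the cited result,'' is that the cited theorem is stated for $L = G$, so one must observe that its proof goes through verbatim when $G$ is replaced by the subgroup $L$ acting on $X=G/\Gamma$.
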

%That is, in the terminology of \cite{KM}, the family of sets 
%$\{\Delta^{-1}\big([z,\infty)\big): z\in\R\}$ is Borel-Cantelli for $\{f_{t}\}$.}
%In particular it follows that, in the terminology of \cite{KM}, the family of sets $$\big\{\Delta^{-1}\big([z,\infty)\big): z\ge 0 \big\}$$ is {\sl Borel-Cantelli} for 
%$\{f_{t} : {t}\in \N\}$; that is, for any $\{{z}({t}): t\in\N\}$ satisfying \equ{div} and almost all $x\in X$ one has $\Delta(f_tx)\ge {z}(t)$ for infinitely many $t\in\N$. 

{\begin{proof} The convergence case is immediate from the classical Borel-Cantelli Lemma. The divergence case is established in \cite{KM, KMapp} for $L = G$, but the argument applies verbatim if $G$ is replaced by {a} subgroup.\end{proof}}

From now on we are going to take  $k \ge 2$ and consider {the case} $G =  {\widehat{G}}_k$, $\Gamma = {\widehat{\Gamma}_{k}}$, $X = {\widehat{X}}_k$, and $L = {G_{k}}$, with notation as in the previous section. Then we have}
%For future reference let us record the following fact:

\begin{thm}\label{thmEM} 
%For $k =2,\dots$, let $G ={\widehat{G}_{k}}$, $\Gamma = {\widehat{\Gamma}_k}$. Then t
{The ${G_{k}}$-action} on ${\widehat{X}_k} = {\widehat{G}_{k}}/{\widehat{\Gamma}_k}$  %(notation as in the previous section) 
is exponentially mixing.
\end{thm}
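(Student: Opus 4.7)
The plan is to exploit the natural fibration $\pi\colon {\widehat{X}_k} \to X_k$ given by $g\Z^k + \vw \mapsto g\Z^k$, whose fibers are $k$-tori $\R^k/\Lambda$. Since $G_k \subset {\widehat{G}_k}$ commutes with $\pi$, fiberwise Fourier analysis produces a $G_k$-invariant orthogonal splitting
$$L^2({\widehat{X}_k}) \;=\; \pi^* L^2(X_k) \;\oplus\; L^2_\perp({\widehat{X}_k}),$$
where the first summand consists of functions pulled back from $X_k$ and the second of those with zero mean on every fiber. For $\varphi,\psi\in C^\infty_2({\widehat{X}_k})$, I would decompose $\varphi=\varphi_0+\varphi_\perp$ and $\psi=\psi_0+\psi_\perp$ accordingly; the cross matrix coefficients $\langle g\varphi_0,\psi_\perp\rangle$ and $\langle g\varphi_\perp,\psi_0\rangle$ vanish by Fubini on the fibers, reducing the problem to estimating $\langle g\varphi_0,\psi_0\rangle-\int\varphi_0\int\psi_0$ and $\langle g\varphi_\perp,\psi_\perp\rangle$ separately.

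The horizontal piece is handled by invoking the classical exponential mixing of $G_k$ on $X_k$ with $L^2$-Sobolev norms, as established in the appendix of \cite{KM} (and also, in greater generality, by Einsiedler--Margulis--Venkatesh). That argument combines the Howe--Moore theorem with Sobolev bounds on smooth matrix coefficients, and the desired estimate follows directly, since $\|\varphi_0\|_{2,\ell}\le \|\varphi\|_{2,\ell}$ and likewise for $\psi_0$.

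For the vertical piece I would further decompose $L^2_\perp({\widehat{X}_k})=\bigoplus_{\mathcal{O}} V_\mathcal{O}$ according to $\Gamma_k$-orbits $\mathcal{O}$ on $\Z^k\nz$ via fiberwise Fourier coefficients. Each $V_\mathcal{O}$ is $G_k$-invariant, and choosing a representative $\vn\in\mathcal{O}$ identifies it, via Mackey induction from $\mathrm{Stab}_{G_k}(\vn)\cong \SL_{k-1}(\R)\ltimes\R^{k-1}$, with a unitary $G_k$-representation containing no nonzero invariant vectors. Howe--Moore then yields vanishing of matrix coefficients at infinity on each $V_\mathcal{O}$, and rerunning the Sobolev argument of \cite{KM} upgrades this to exponential decay at a rate $\gamma>0$ independent of $\mathcal{O}$.

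The main obstacle is precisely the uniformity of this decay in $\mathcal{O}$ together with the need to convert the orbit sum $\sum_\mathcal{O}$ into a bound controlled by finite-order Sobolev norms of $\varphi_\perp,\psi_\perp$. This is resolved by the standard observation that each fiber derivative multiplies the $\vn$-th Fourier coefficient by a factor of order $|\vn|$; taking $\ell$ fiber derivatives thus gives coefficient decay of order $|\vn|^{-\ell}$, making the orbit sum absolutely convergent once $\ell$ is large enough and delivering the estimate required by Definition~\ref{expmix}. Alternatively, Theorem~\ref{thmEM} can simply be quoted from one of the available affine-group versions of the Kleinbock--Margulis mixing estimate.
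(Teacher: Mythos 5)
Your decomposition $L^2(\widehat{X}_k) = \pi^*L^2(X_k)\oplus L^2_\perp(\widehat{X}_k)$ is in the same spirit as the paper's treatment of the $k=2$ case (functions invariant under the translation action of $\R^k$ versus the orthogonal complement), and the reduction of the horizontal piece to the Kleinbock--Margulis mixing on $X_k$ is fine. But there is a genuine gap in how you handle the vertical piece. You write that Howe--Moore yields vanishing of matrix coefficients on each $V_\mathcal{O}$ and that ``rerunning the Sobolev argument of \cite{KM} upgrades this to exponential decay at a rate $\gamma>0$ independent of $\mathcal{O}$.'' This cannot work as stated: Howe--Moore is purely qualitative and provides no rate whatsoever, and the Sobolev machinery in \cite{KM} does not manufacture a rate from nothing --- it converts an \emph{a priori} spectral input (isolation from the trivial representation in the Fell topology, or temperedness, i.e.\ $L^{2+\vre}$-integrability of $K$-finite matrix coefficients) into exponential decay for smooth vectors. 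Without such an input you get exactly what Howe--Moore gives you: decay, with no rate and no uniformity across the orbit pieces. The Mackey identification you sketch --- each $V_\mathcal{O}$ arising by induction from $\mathrm{Stab}_{G_k}(\vn)\cong\SL_{k-1}(\R)\ltimes\R^{k-1}$ --- is actually the right place to extract the missing ingredient, since one can argue that the resulting induced representations are tempered; but you would need to say that explicitly and then invoke a Harish-Chandra $\Xi$-function bound or \cite[Theorem 3.1]{KS}, rather than appealing to Howe--Moore plus Sobolev.

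The paper's route is cleaner on both counts. It first reduces the whole problem, via \cite[Theorem 3.4]{KM}, to showing that $L^2_0(\widehat{X}_k)$ is isolated from the trivial representation. For $k>2$ this is immediate from Property (T) of $\SL_k(\R)$, with no decomposition of the representation at all. For $k=2$ it uses the decomposition you describe, but then dispatches the entire vertical summand in one stroke via \cite[Theorem V.3.3.1]{HT}: any unitary $\ASL_2(\R)$-representation with no nonzero $\R^2$-fixed vectors restricts to a tempered $\SL_2(\R)$-representation, after which \cite[Theorem 3.1]{KS} gives the exponential rate. This avoids the orbit-by-orbit bookkeeping, the Mackey analysis, and the question of uniformity in $\mathcal{O}$ that your sketch has to grapple with --- and it supplies the spectral input that your invocation of Howe--Moore is missing. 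If you want to keep your more hands-on approach, replace the Howe--Moore step with a temperedness statement for the induced pieces; otherwise the argument as written does not establish exponential decay.
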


\begin{proof} {According to} \cite[Theorem 3.4]{KM},
% and the ergodicity of the ${\widehat{G}_{k}}$-action on ${\widehat{X}_k}$, 
{exponential mixing holds whenever} %it suffices to show that 
the regular representation of {${G_{k}}$} on the space $L_0^2({\widehat{X}_k})$ (functions in $L^2({\widehat{X}_k})$ with integral zero) is isolated in the Fell topology from the trivial representation. 
This is immediate if  $k > 2$ since in this case ${G_{k}}$ has Property (T). 

If  $k=2$, let us write $L_0^2( {\widehat{X}}_2)$ as a direct sum of two spaces: functions invariant under the action of $\R^2$ by translations, and its orthogonal complement. The first representation is isomorphic to the regular representation of {$\SL_2(\R)$} on {$L_0^2\big(\SL_2(\R)/\SL_2(\Z)\big)$}, %\big(\SL_2(\R)/\SL_2(\Z)\big)$, 
which is isolated from the trivial representation by \cite[Theorem 1.12]{KM}. As for the second component, one can use \cite[Theorem V.3.3.1]{HT} (see also  \cite[Theorem 4.3]{GGN}) which asserts that for any unitary representation $(\rho,V)$ of {$\ASL_2(\R)$} with no nonzero vectors fixed by $\R^2$, the restriction of $\rho$ to {$\SL_2(\R)$} is {\sl tempered}, that is, there exists a dense set of vectors in $V$ whose matrix coefficients are in $L^{2+\vre}$ for any $\vre > 0$. Exponential mixing thus follows from \cite[Theorem 3.1]{KS}, which establishes exponential decay of matrix coefficients of strongly $L^p$ irreducible unitary representations of connected semisimple centerfree Lie groups.  See also a preprint  \cite{edwards} for more precise estimates. \end{proof}

%\comm{We need to check that everything here makes sense.}
%\bigskip
%Thus to reduce Theorem \ref{dynamicalmainthm} to the above theorem, it remains to \comm{(probably better to turn it into a formal proof)} { (I converted your bullet points into a possible reduction in red below)}:

{Now let $\Delta$ be the function  on ${\widehat{X}_k}$ defined by \equ{defdelta}.  
In the next section we are going to establish the following two-sided estimate for the measure of super-level sets of $\Delta$:
\begin{thm}\label{thmDL} For any $k\ge 2$ there exist $c, C>0$ such that 
\eq{ddl}{ce^{-kz}\leq \Phi_\Delta(z) \leq Ce^{-kz} 
\text{ for all }z\geq 0. }
%For $k =2,\dots$, let $G ={\widehat{G}_{k}}$, $\Gamma = {\widehat{\Gamma}_k}$. Then t
%{The function $\Delta$ on $X_k$ defined by \equ{defdelta} is DL.}
\end{thm}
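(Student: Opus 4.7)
The plan is to prove the upper and lower bounds separately, in both cases exploiting the fiber decomposition of $\widehat{X}_k$ over $X_k$ (with fiber the unit-volume torus $\mathbb{R}^k/L$) to reduce the problem to lattice-point computations.

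For the upper bound I will apply the second moment method to the counting function $N_z(\Lambda) := \#\bigl(\Lambda \cap B(0, e^z)\bigr)$, where $B(0,r)$ denotes the sup-norm ball. Unfolding $\sum_{v\in\Lambda}$ over a grid (cleaner than the lattice case, since the grid sum has no distinguished origin that must be excluded) gives $\int_{\widehat{X}_k} N_z \, d\widehat{\mu} = \mathrm{vol}(B(0,e^z)) = (2e^z)^k$. A similar unfolding for pairs of distinct points of a grid, together with Siegel's mean value theorem on $X_k$ (valid since $k \ge 2$), yields $\int_{\widehat{X}_k} N_z(N_z-1)\, d\widehat{\mu} = (2e^z)^{2k}$. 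Therefore $\mathrm{Var}(N_z) = (2e^z)^k$, and Chebyshev's inequality gives
\begin{equation*}
\Phi_\Delta(z) \;=\; \widehat{\mu}\bigl(\{N_z = 0\}\bigr) \;\le\; \frac{\mathrm{Var}(N_z)}{(\mathbb{E} N_z)^2} \;=\; 2^{-k} e^{-kz}.
\end{equation*}

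For the lower bound I write $\Phi_\Delta(z) = \int_{X_k} f_z(L) \, d\mu(L)$, where $f_z(L)$ is the normalized measure on $\mathbb{R}^k/L$ of $\{w : (L+w) \cap B(0,e^z) = \emptyset\}$, and exhibit a subset $\mathcal{Y}_z \subset X_k$ of measure $\gtrsim e^{-kz}$ on which $f_z(L) \ge 1/2$. Take $\mathcal{Y}_z := \{L : \lambda_k(L) \ge C_0 e^z\}$, where $\lambda_k$ denotes the largest successive minimum in sup norm and $C_0 = C_0(k)$ is a suitable constant. For $L \in \mathcal{Y}_z$, Mahler's transference supplies a nonzero $v \in L^*$ with $\|v\|_1 \le (4e^z)^{-1}$, so that $|\langle v, x\rangle| \le 1/4$ for every $x \in B(0, e^z)$. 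The functional $\langle v, \cdot\rangle$ descends to a map $\mathbb{R}^k/L \to \mathbb{R}/\mathbb{Z}$, and any $w$ whose image lies in $[1/4, 3/4] \bmod 1$ satisfies $\inf_{y \in L} \|w-y\| > e^z$ (otherwise $|\langle v, w-y\rangle| \le 1/4$ would contradict the fact that $\langle v, w-y\rangle \equiv \langle v, w\rangle \pmod 1$). This gives $f_z(L) \ge 1/2$. Combined with the classical cusp-volume estimate $\mu(\{L : \lambda_k(L) \ge T\}) \asymp T^{-k}$ (which follows from $\mu(\{\lambda_1 \le \epsilon\}) \asymp \epsilon^k$ via Mahler duality) we conclude $\Phi_\Delta(z) \ge (1/2)\, \mu(\mathcal{Y}_z) \gtrsim e^{-kz}$.

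The main technical input is the cusp-volume estimate $\mu(\{\lambda_k \ge T\}) \asymp T^{-k}$, which relies on standard reduction theory for $X_k$ (Siegel domain or Iwasawa decomposition); by contrast, the upper bound is a direct computation via Siegel's mean value theorem and Chebyshev's inequality.
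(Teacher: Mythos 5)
Your upper bound is, in effect, a self-contained derivation of the special case of Athreya's bound that the paper quotes: after unfolding the torus fiber, the first and second moment identities reduce to the Siegel mean value formula on $X_k$, the counting function $N_z$ has mean and variance both equal to $\mathrm{vol}\bigl(B(0,e^z)\bigr)$, and Chebyshev kills the null event. So that half is essentially the same idea, with the paper outsourcing the computation to \cite{Athreya}. Your lower bound, by contrast, follows a genuinely different and considerably shorter route than the paper's. Both start from the Fubini decomposition $\widehat{\mu}(S_{e^z}) = \int_{X_k} Q(L,e^z)\,d\mu(L)$, but the paper then works in Iwasawa coordinates inside an explicit Siegel set, derives the parallelepiped formula $Q(na\mathbb{Z}^k,r)=1-2ra_1\cdots a_{k-1}$ when $a_{k-1}<2r\le a_k$, and evaluates a $(k-1)$-fold iterated exponential integral to extract the $r^{-k}$. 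You instead note that when $\lambda_k(L)\gg e^z$, Mahler transference gives a short dual vector $v\in L^*$, the character $w\mapsto \langle v,w\rangle \bmod 1$ on $\mathbb{R}^k/L$ pushes the normalized fiber measure to uniform measure on $\mathbb{R}/\mathbb{Z}$ (being a surjective continuous homomorphism of compact abelian groups), and the middle half of the circle is disjoint from the image of $B(0,e^z)+L$, giving $Q(L,e^z)\ge 1/2$; the cusp estimate $\mu(\{\lambda_k\ge T\})\asymp T^{-k}$ --- equivalently $\mu(\{\lambda_1\le\epsilon\})\asymp\epsilon^k$ --- then finishes. In effect the paper reproves that cusp estimate inline (that is where its Siegel-set integration goes), whereas you quote it; this is why your argument is shorter. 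Both are correct; one small point to tidy is that the endpoints of $[1/4,3/4]$ should be excluded (or one should note they form a null set) so as not to run into equality in $|\langle v,w-y\rangle|\le 1/4$.
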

%The above property was referred in \cite{KM} as $k$-distance-like. 
%\comm{Do we prove it for $z_0 = 0$ or for some $z_0$?} {\color{green}I think if it's true for some choice of $z_0$ its true for any choice, right? Maybe $z_0=0 $ is as good as any for the statement? Of course the constants change depending on $z_0$, so maybe you mean for explicit constants $c, C$?  I think I've so far failed to find specific constants in the $k>2$ computation.  But we get explicit constants in the $k=2$ computation, and there we give a specific $z_0$ which is smaller than $0$.  I've just added details about that in red.} 
This is all one needs to settle %the divergence case of 
Theorem \ref{dynamicalmainthm}:}

\ignore{\begin{itemize}
\item observe that for a one-parameter diagonalizable $\{g_t : t\in\R\}\subset G_k$ one has $\|g_t\| \ge {\alpha t}$ for some $\alpha > 0$, which immediately implies \eqref{ed};
\item use Theorem \ref{thmEM}, that is, the exponential mixing of the $G_k$-action on ${\widehat{X}_k}$;
\item {\color{red}\sout{show that to prove Theorem \ref{dynamicalmainthm} it suffices to consider the case when $r(t) \ge 0$ for large enough $t$, and thus one can replace $\Delta$ as in \equ{defdelta} with $\Delta'\df \max(\Delta,0)$}};
\item show that the function {\color{red}$\Delta$}  is DL; this will be done in the next section. 
\end{itemize}}

\begin{proof}[{Proof of Theorem \ref{dynamicalmainthm} modulo  Theorem  \ref{thmDL}}]
%{\color{red}\noindent \textit{Reduction of Theorem \ref{dynamicalmainthm} to Theorem \ref{thm4.3}}:  
Let %$G_k$ be as in Theorem \ref{dynamicalmainthm} and let 
$\{g_t: t\in\R\}$ be a diagonalizable {unbounded}  one-parameter subgroup of ${{G_{k}}}$.  By Theorem \ref{thmEM}, the action of ${G_{k}}$ on ${\widehat{X}_k}$ is exponentially mixing. Observe also that one has $\|g_t\| \ge {\alpha t}$ for some $\alpha > 0$, which immediately implies \eqref{ed}. {It is %clear 
easy to see that  \equ{ddl} implies \eqref{dl}  with $z_0 = 0$, and part (a) of Definition \ref{defdl} is given by   Lemma \ref{uc}. The conditions of Theorem \ref{thm4.3} are therefore met, and Theorem \ref{dynamicalmainthm} follows.}
\end{proof}

%The proof of Theorem \ref{thm4.3} is given in the Appendix {(refer to the erratum here?)}, written by the first-named author and G.A.\ Margulis. This occasion is used to correct a minor mistake in \cite{KM}.

\section{$\Delta$ is  Distance-Like: {a warm-up}}%proof of Theorem \ref{thmDL}}\label{deltadl}
%\noindent In this section we make the identifications
%
%$$X_k=\ASL_k(\mathbb{R})/\ASL_k(\mathbb{Z}) \hspace{5mm} X_k'=%\SL_k(\mathbb{R})/\SL_k(\mathbb{Z})$$
%
%\noindent for the space $X_k$ of affine unimodular lattices and the space $X_k'$ of unimodular lattices in $\mathbb{R}^k$. We also fix normalized Haar measures, $\mu_{{X}_k}$ and $\mu_{X_k}$ for these spaces. We may omit the subscript when $k$ is understood. 

\ignore{ \noindent {\color{red} To compute the sum \equ{div} we must estimate the measures of the super-level sets $\Delta^{-1}[r,\infty)$.   Let $X$ be a homogeneous space with a finite Haar measure $\mu$, let $\Delta :X \rightarrow [0,\infty)$,  and recall the notation $\Phi_\Delta(r):= \mu (\Delta^{-1}[r,\infty))$ from the previous section. We take the following notion from \cite{KM}: If $d>0$, $\Delta$ is called \textit{d-Distance Like} (or \textit{d-DL}) if it is uniformly continuous and there exist $c, C>0$ such that 

\eq{ddl}{ce^{-dr}\leq \Phi_\Delta(r) \leq Ce^{-dr}}

\noindent for all $r\geq 0$. Note that $\Delta$ is ``distance like" (as defined in the previous section) if it is $d$-DL for some $d>0$.  

{We now return to the setting of \S\ref{dyn}, i.e.\ consider ${\widehat{X}_k} = {\widehat{G}_{k}}/{\widehat{\Gamma}_k}$ and ${X}_k = {G_{k}}/{{\Gamma}_{k}}$. We also fix normalized Haar measures, $\mu$ and $\mu'$ on these spaces.}  The purpose of this section is to prove the following proposition.

\begin{prop}\label{deltais2dl} Fix $k\in \N$ and let $\Delta: X_k \rightarrow \R$ be as in \equ{defdelta}.  Then $\Delta$ is $k$-DL. 
\end{prop}}}

%In this section 
\noindent {For the rest of the paper we keep the notation $${{G}} = {G_{k},} \ {\widehat{G}} = {\widehat{G}_k} = {G_{k}}\rtimes \R^k, %$\widehat{\Gamma} =  \widehat{\Gamma}_{k}$,  
\ X = X_k = G_k/\Gamma_k, \ \widehat{X} = \widehat{X}_k = \widehat{G}_k/\widehat{\Gamma}_k,
$$ and %$G' = {G_{k}}$, 
let $\mu$ (resp.\ $\widehat\mu$) be the Haar probability measure on $X$ (resp.\ $\widehat{X}$).}
We denote by $\mu_G$ and $\mu_{{\widehat{G}}}$ the left-invariant Haar measures on $G$ and ${\widehat{G}}$ respectively which are locally pushed forward to $\mu$ and ${\widehat{\mu}}$.}
%\comm{I made only  minor changes below -- please check that it is consistent with the above, and let's edit further.}
%\bigskip

Recall that 
$$
\pd(z) = {\widehat{\mu}}\big(\{\Lambda \in {\widehat{X}} : \Delta(\Lambda) \ge z\}\big) = {\widehat{\mu}}\big(\{\Lambda \in X : \Lambda \cap B(0,e^z) = \varnothing\}\big),
$$
where for $\vv\in\R^k$ and $r \ge 0$ we let $B(\vv,r)$ be the open ball in $\R^k$ centered at $\vv$ of radius $r$ with respect to the supremum norm. It will be convenient to write $${S_r}:=\Delta^{-1}\big([\log r,\infty)\big) = \{\Lambda \in {\widehat{X}}: B(0,r) \cap \Lambda= \varnothing \}.$$
Our goal is thus to prove that \eq{ddlwiths}{
{cr^{-k}\leq {\widehat{\mu}}(S_r) \leq Cr^{-k} 
\text{ for all }r\geq 1, }}
where  $c,C$ are constants dependent only on $k$.}

\smallskip
First let us discuss the  upper bound. It is in fact a special case of a recent result of Athreya, namely a random Minkowski-type theorem for the space of grids \cite[Theorem~1]{Athreya}:

\begin{prop}[Athreya]\label{Athreya} For a measurable $E\subset \mathbb{R}^k$,
$${\widehat{\mu}}\big(\{\Lambda \in {\widehat{X}}: \Lambda \cap E = \varnothing \} \big)\leq \frac{1}{1+\lambda (E)}.$$
\end{prop}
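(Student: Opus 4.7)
The plan is to prove this via a standard second-moment / Paley--Zygmund argument, using a Siegel-type integral formula on $\widehat{X} = \widehat{X}_k$ (recall that in our setting $k = m+n \ge 2$). Define the counting function
$$N(\Lambda) := |\Lambda \cap E|,$$
so that the statement $\Lambda \cap E = \varnothing$ is equivalent to $N(\Lambda) = 0$. By the Cauchy--Schwarz inequality applied to $N = N \cdot \mathbf{1}_{\{N>0\}}$, one has
$$\widehat{\mu}(N > 0) \;\ge\; \frac{\bigl(\int_{\widehat{X}} N \, d\widehat{\mu}\bigr)^2}{\int_{\widehat{X}} N^2 \, d\widehat{\mu}},$$
so it suffices to compute $\int N \, d\widehat{\mu} = \lambda(E)$ and $\int N^2 \, d\widehat{\mu} = \lambda(E) + \lambda(E)^2$, which would give $\widehat{\mu}(N > 0) \ge \lambda(E)/(1+\lambda(E))$ and hence the desired bound.

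For the first moment, I would write a typical grid as $\Lambda = L + \vw$ with $L \in X$ a unimodular lattice and $\vw$ uniformly distributed on a fundamental domain $F$ of $L$ (normalized to have $\lambda(F)=1$). A direct unfolding gives
$$\int_F N(L+\vw) \, d\vw = \int_F \sum_{p \in L} \mathbf{1}_E(p+\vw)\, d\vw = \int_{\R^k} \mathbf{1}_E(\vu) \, d\vu = \lambda(E),$$
independently of $L$, so $\int_{\widehat{X}} N \, d\widehat{\mu} = \lambda(E)$.

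For the second moment the main work is to compute $\int N(N-1)\, d\widehat{\mu}$, i.e.\ the expected number of ordered pairs of distinct points of $\Lambda$ in $E$. Expanding and unfolding in $\vw$ as above, with the substitution $q = p + q'$, $q' \in L \setminus \{0\}$, yields
$$\int_F \!\!\sum_{\substack{p,q \in L\\ p\neq q}} \!\!\mathbf{1}_E(p+\vw)\mathbf{1}_E(q+\vw)\,d\vw \;=\; \sum_{v \in L \setminus\{0\}} \int_{\R^k}\! \mathbf{1}_E(\vu)\mathbf{1}_E(\vu+v)\,d\vu.$$
Now integrating over $L \in X$ and invoking Siegel's mean value formula (valid since $k \ge 2$), the right-hand side becomes
$$\int_{\R^k}\!\int_{\R^k} \mathbf{1}_E(\vu)\mathbf{1}_E(\vu+v)\,d\vu\, dv = \lambda(E)^2.$$
Combined with the diagonal contribution $\lambda(E)$ from the $p=q$ terms, this gives $\int N^2 \, d\widehat{\mu} = \lambda(E) + \lambda(E)^2$, and plugging into the Cauchy--Schwarz bound concludes the proof.

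The only real obstacle is the second-moment computation: one must be careful that unfolding over the offset $\vw$ reduces the pair sum to a sum over $L\setminus\{0\}$ in a way that matches the hypotheses of Siegel's formula on $X_k$. The rest is routine bookkeeping.
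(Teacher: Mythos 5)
Your proof is correct, and it reproduces (essentially verbatim) Athreya's original argument from the cited reference [A15]: a Paley--Zygmund/Cauchy--Schwarz bound driven by the first and second moments of the counting function $N(\Lambda)=|\Lambda\cap E|$, computed via the fibration $\widehat{X}\to X$ together with the affine and classical Siegel mean value formulas. The paper itself does not supply a proof but simply cites Athreya's theorem, so there is nothing in the paper to compare against beyond the citation; your account is a faithful rendering of the source.

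Two minor points worth recording for completeness. First, the Cauchy--Schwarz step requires $\lambda(E)>0$ and $\lambda(E)<\infty$; if $\lambda(E)=0$ the claimed bound is the trivial $\le 1$, and if $\lambda(E)=\infty$ one obtains the conclusion by monotone approximation of $E$ by sets of finite measure. Second, the application of Siegel's formula to $f(v)=\lambda\bigl(E\cap(E-v)\bigr)$ is legitimate for $k\ge 2$ because $f$ is nonnegative measurable with $\int_{\R^k}f\,dv=\lambda(E)^2<\infty$ (extend from compactly supported functions by monotone convergence); you correctly flag that $k\ge 2$ is needed here, which is consistent with the paper's standing assumption $k=m+n\ge 2$.
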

{\noindent}{Here} and hereafter $\lambda$ stands for Lebesgue measure on $\R^k$.
Taking $E= B(0,r)$ {shows that} ${\widehat{\mu}}(S_r) < {2^{-k}r^{-k}}$. 
%\comm{I am not sure it is correct, shouldn't $r$ be replaced by $e^r$?} 
%We will obtain 
Thus it only remains to establish a lower bound in \equ{ddlwiths}.
% in a way similar to the proof of the two-dimensional case.

%\begin{prop}\label{lowerbound} There exists $c>0$ such that $  \mu(S_r)  \ge cr^{-k}$ for all $r \ge 1$.
%\leq \Phi_\Delta(r)$ for large $r$. 
%\comm{Or for large enough $r$? need to decide} \end{prop}

\smallskip
There exists an obvious projection, $\pi: {\widehat{X}} \rightarrow {X}$, making ${\widehat{X}}$ into a $\mathbb{T}^k$-bundle over ${X}$ \linebreak ($\pi$ simply translates  {one of the vectors in a grid} to the origin). 
{It is easy to see that  $\mu_{{\widehat{G}}}$ is the %{\color{red} (local?)} 
product of $\mu_{G}$ and  $\lambda$. Therefore %the $G$-invariant measure $\mu$ on $X$ disintegrates  
one has the following Fubini formula:
% for any measurable $S \subset  {\widehat{X}}$:
{\eq{fubini}{{\widehat{\mu}} (S_{r})=\int_{{X}} %\mu_{\Lambda}
Q(\Lambda, r) \,d\mu(\Lambda),\quad \text{ where }Q(\Lambda, r):=\lambda\big(S_r\cap \pi^{-1}(\Lambda)\big).
}} Here, {for $\Lambda \in X$,} $\pi^{-1}(\Lambda)$ is identified with $\mathbb{R}^k/\Lambda$ via \eq{ident}{[\vv]\in \mathbb{R}^k/\Lambda \longleftrightarrow\Lambda - \vv,}
%\comm{(using $+$ instead of $-$ seems to be more natural, doesn't it?)} 
and, in the hope that it will not cause any confusion, we will let $\lambda$ %will be our notation for 
stand for the normalized Haar measure on $\R^k/\Lambda$ for any $\Lambda\in {X}$.} %In particular, we see that $\ASL_k(\mathbb{Z})$ is a lattice in $\ASL_k(\mathbb{R})$.
{Writing $\rho_\Lambda$ for the projection  $\R^k\rightarrow \R^k/\Lambda$, we have
\begin{equation}\label{torusvolume}
\begin{aligned}
S(r)\cap \pi^{-1}(\Lambda)&=\big\{[\vv] \in \mathbb{R}^k/\Lambda: B(0,r)\cap (\Lambda - \vv)=\varnothing \big\}\\
&=\big\{[\vv] \in \mathbb{R}^k/\Lambda : B(\vv, r) \cap \Lambda =\varnothing\big\}= \rho_\Lambda\big(\R^k\ssm\bigcup_{\vv \in \Lambda}B(\vv, r)\big),
\end{aligned}
\end{equation} 
so that 
%\eq{defQ}{Q(\Lambda, r):= \lambda\big(S(r)\cap \pi^{-1}(\Lambda)\big)} 
$Q(\Lambda, r)$ is the area of a region in a fundamental domain (parallelepiped) in $\R^k$ for $\Lambda$ consisting of points which are farther than $r$ from its vertices, that is, from all points of $\Lambda$.}

\ignore{First we will prove that  $(X, \mu_X)$ satisfies a Fubini Theorem as a torus bundle over ${X}$. We will require the following lemma, which follows from a routine application of Fubini's Theorem:

\begin{lem}\label{prodlemma}If $W, Y, Z$ are measure spaces, and $f: W \rightarrow Z$ is measure preserving and $g:W\times Y \rightarrow Y$ has the property that $g_w:Y\rightarrow Y$ is measure preserving for all $w\in W$, then

$$f\times g: W\times Y\rightarrow Z\times Y,\hspace{3mm} (w,y)\mapsto (f(w), g(w,y))$$

\noindent is measure preserving for the respective product measures.
\end{lem}

\begin{proof} Write $\mu_W, \mu_Y, \mu_Z$ for the measures. Let $A\subset Z, B\subset Y$ be measurable. Then the measure of the pre-image of $A\times B$ is

$$\int_W\int_Y \chi_A(f(w))\cdot \chi_B(g_w(y)) \hspace{2mm}d\mu_Y\hspace{1mm} d\mu_W= \int_W \chi_A(f(w)) \int_Y \chi_B(g_w(y)) \hspace{2mm}d\mu_Y\hspace{1mm} d\mu_W=$$

$$ \int_W \chi_A(f(w))\mu_Y(B) \hspace{2mm}d\mu_W= \mu_Y(B)\int_W\chi_A(f(w))\hspace{2mm}dw =$$

$$ \mu_Y(B)\mu_Z(A) =\mu_Z\otimes \mu_Y(A\times B)$$ \end{proof}

We now distinguish a set of local trivializations for $\pi: X\rightarrow {X}$. Let $U$ be an open subset of ${X}$ on which there exists a continuous section $\sigma: U \rightarrow \SL_k(\mathbb{R})$. Define $$\tau : U \times \mathbb{T}^k \rightarrow X, \hspace{5mm} (\Lambda, [v]) \mapsto [\sigma(\Lambda), \sigma(\Lambda)v] .$$
Here we think of $[\sigma(\Lambda), \sigma(\Lambda)v]$ as the equivalence class of the pair in $\SL_k(\R) \rtimes \R^k$ , and $\sigma(\Lambda)v$ means the matrix $\sigma(\Lambda)$ applied to $\vv\in \mathbb{R}^k$. This gives a local trivialization. The open set $U$ has the measure coming from $\mu_{{X}}$, and of course $\mathbb{T}^k$ has a natural Haar measure. Thus $\tau$ induces a measure on its image by pushing forward the product measure $\mu_{{X}}\otimes \mu_{\mathbb{T}^k}$. To show that this induced measure is just the Haar measure on ${X}$, we stitch together a Borel measure from all such trivializations and then show that this measure is $\ASL_k(\R)$-invariant.

First we must check that the measures induced by such trivializations agree on overlaps. I.e. if $\sigma_1$ is any other section $U \rightarrow \SL_k(\mathbb{R})$ giving rise to the trivialization $\tau_1$, we have

\eq{invar}{ \tau_\ast(\mu_{{X}} |_U \otimes \mu_{\mathbb{T}^k})= \tau_{1 \ast}(\mu_{{X}} |_U \otimes \mu_{\mathbb{T}^k}).} But a simple computation shows that

$$\tau_1^{-1}\circ \tau (\Lambda, [v])= [\Lambda, \sigma_1^{-1}(\Lambda) \sigma(\Lambda)v]$$
so that \equ{invar} follows from Lemma \ref{prodlemma}. Since these local product measures agree on overlaps, we can glue them together (using a partition of unity, for instance) to get a regular Borel measure, $\nu$, on $X$. To see that $\nu= \mu_X$ up to normalization, we must check that $\nu$ is $\ASL_k(\mathbb{R})$-invariant.

\begin{prop}\label{nuinvar} $\nu$, as defined above, is $\ASL_k(\mathbb{R})$-invariant. \end{prop}

\begin{proof} Let $$\sigma: U \rightarrow \SL_k(\mathbb{R}), \hspace{5mm} \tau: U \times \mathbb{T}^k\rightarrow X$$
as above. Let $(A, w) \in \ASL_k(\mathbb{R})$ and note that $$\sigma_1: AU \rightarrow \SL_k(\mathbb{R}), \Lambda \mapsto A\sigma(A^{-1}\Lambda)$$
is a continuous section defined on the open set $AU$. Let $\tau_1$ be the corresponding trivialization of $\pi^{-1}(AU)$. In these coordinates, left multiplication by $(A, w)$ takes the form

$$L_{(A,w)}: U\times \mathbb{T}^k \rightarrow AU \times \mathbb{T}^k, \hspace{2mm} \Lambda, [v] \mapsto A\Lambda, [v+ \sigma(\Lambda)^{-1}A^{-1}w].$$
By Lemma \ref{prodlemma}, this map preserves the product measures, $(\mu_{{X}}|_U)\otimes \mu_{\mathbb{T}^k}$, $(\mu_{{X}}|_{AU})\otimes \mu_{\mathbb{T}^k}$. Thus $L_{(A,w)}: \pi^{-1}(U)\rightarrow \pi^{-1}(AU)$ preserves $\nu$. Since this is true for all such trivializations, $\nu$ is $\ASL_k(\mathbb{R})$-invariant. \end{proof}

The above proposition gives the following Fubini formula for measurable $S \subset X$:

\eq{fubini}{\mu_X(S)=\int_{{X}} {\lambda}(S\cap \pi^{-1}(\Lambda))\,d\mu_{{X}}(\Lambda).} {Here $\pi^{-1}(\Lambda)$ is identified with $\mathbb{R}^k/\Lambda$ via $[v]\in \mathbb{R}^k/\Lambda \mapsto \Lambda - v$, and $\mu_\Lambda$ is our notation for the normalized Haar measure on $\R^k/\Lambda$. In particular, we see that $\ASL_k(\mathbb{Z})$ is a lattice in $\ASL_k(\mathbb{R})$.
Writing $\rho_\Lambda$ for the projection  $\R^2\rightarrow \R^2/\Lambda$, we have
\begin{equation}\label{torusvolume}
\begin{aligned}
S(r)\cap \pi^{-1}(\Lambda)&=\big\{[\vv] \in \mathbb{R}^2/\Lambda: B(0,r)\cap (\Lambda - \vv)=\varnothing \big\}\\
&=\big\{[\vv] \in \mathbb{R}^2/\Lambda : B(\vv, r) \cap \Lambda =\varnothing\big\}= \rho_\Lambda\big(\R^2\ssm\bigcup_{\vv \in \Lambda}B(\vv, r)\big)
\end{aligned}
\end{equation} 
so that 
%\eq{defQ}{Q(\Lambda, r):= \lambda\big(S(r)\cap \pi^{-1}(\Lambda)\big)} 
$Q(\Lambda, r):$ is the area of a region in a fundamental parallelepiped for $\Lambda$ consisting of points which are farther than $r$ from its vertices, that is, from all points of $\Lambda$.}}

\smallskip
 
%We will prove \equ{ddlwiths} by integrating over ${X}$ the function which takes a lattice $\Lambda\in {X}$ to the volume of the intersection of $S(r)$ with $\pi^{-1}(\Lambda)$. %We will therefore require coordinates for ${X}$. 
Recall that
$\SL_2(\mathbb{R})$ double-covers the unit tangent bundle of the hyperbolic upper-half plane, $\mathbb{H}^2$. Since the action of $\SL_2(\mathbb{Z})$ on $\mathbb{H}^2$ has a convenient fundamental domain, there are convenient coordinates for a set of full measure in $\SL_2(\mathbb{R})/\SL_2(\mathbb{Z})$. This enables us to give a rather {tidy proof} for the two-dimensional case of \equ{ddlwiths}, {%\color{red} 
handling both bounds simultaneously without {using} Proposition \ref{Athreya}}. This proof also illustrates the main idea necessary to proving the lower bound in the general case. We therefore start with a separate, redundant proof of the two-dimensional case.

%\medskip

%\noindent \textit
\begin{proof}[Proof of \equ{ddlwiths} for $k=2$] 
%Let us write $$S(r):=\Delta^{-1}[e^r,\infty) = \{\Lambda \in X: B(0,r) \cap \Lambda= \varnothing \}$$ where $B(0,r)$ is the open ball of radius $r$ centered at $0$ with respect to the sup norm.  We must estimate the measure of $S(r)$ for large values of $r$. 
%We will abbreviate $$Q(\Lambda, r):= \mu_{\Lambda}(S(r)\cap \pi^{-1}(\Lambda)),$$ where $\mu_\Lambda$ is the normalized Haar measure on $\R^2/\Lambda$. By \equ{fubini}, we have
%$$\mu_X(S(r))= \int_{{X}} Q(\Lambda, r) d\mu_{{X}}(\Lambda).$$ 
\ignore{With the identification %$ \R^2/\Lambda \longleftrightarrow \pi^{-1}(\Lambda)$
\equ{ident}, 
%$[\vv]\mapsto \Lambda +\vv$, 
and writing $\rho_\Lambda$ for the projection  $\R^2\rightarrow \R^2/\Lambda$, we have
\begin{equation}\label{torusvolume}
\begin{aligned}
S(r)\cap \pi^{-1}(\Lambda)&=\big\{[\vv] \in \mathbb{R}^2/\Lambda: B(0,r)\cap (\Lambda - \vv)=\varnothing \big\}\\
&=\big\{[\vv] \in \mathbb{R}^2/\Lambda : B(\vv, r) \cap \Lambda =\varnothing\big\}= \rho_\Lambda\big(\R^2\ssm\bigcup_{\vv \in \Lambda}B(\vv, r)\big)
\end{aligned}
\end{equation} 
so that \eq{defQ}{Q(\Lambda, r):= \lambda\big(S(r)\cap \pi^{-1}(\Lambda)\big)} is the area of a region in a fundamental parallelogram consisting of points which are farther than $r$ from its vertices, that is, from all points of $\Lambda$.}
For fixed $r$, consider the map $({\kappa},{n},{a})\mapsto Q({\kappa}{n}{a} \Z^2, r)$, whose domain is ${K}\times {N}\times {A}$, the Iwasawa decomposition   for $G = \SL_2(\R)$. (Here ${K}$, $N$, $A$ are the groups of orthogonal, upper-triangular unipotent, and diagonal matrices respectively.) We first show that a change of ${\kappa}$ does not significantly change the value of $Q$. Indeed, since rotation perturbs the sup norm by no more than a factor of $\sqrt{2}$, for any ${\kappa} \in {K}$ and $\x,\y \in \mathbb{R}^2$ we have:
$$\|{\kappa}\x- {\kappa}\y\| \geq \sqrt{2}r \implies \|\x- \y\| \geq r \implies \|{\kappa}\x- {\kappa}\y\| \geq  {r}/{\sqrt{2}}\,,$$ %Again writing $\rho_\Lambda$ for the projection $\mathbb{R}^2\rightarrow \mathbb{R}^2/\Lambda$, we therefore have
hence
$$\rho_\Lambda \big(\mathbb{R}^2\ssm\bigcup_{\vv \in \Lambda}B({\kappa}\vv, \sqrt{2}r)\big)\subset \rho_\Lambda \big(\mathbb{R}^2\ssm\bigcup_{\vv \in \Lambda}B(\vv, r)\big)\subset \rho_\Lambda \big(\mathbb{R}^2\ssm\bigcup_{\vv \in \Lambda}B({\kappa}\vv, {r}/{\sqrt{2}})\big).$$ By \eqref{torusvolume}, this implies \eq{estimate}{ Q({\kappa}\Lambda, \sqrt{2}r)\leq Q(\Lambda, r)\leq Q({\kappa}\Lambda,  {r}/{\sqrt{2}}).}

Let ${a}= \diag({\alpha},{\alpha}^{-1})$. If \eq{2r}{{2r > {\alpha},}} then the lattice ${n}{a}\Z^2$ consists of horizontal rows of vectors, each closer than $2r$ to its horizontal neighbors. Thus the boxes making up the union $\cup_{\vv \in {n}{a}\Z^2}%\overline
{B(\vv, r)}$ overlap in the horizontal direction, creating horizontal strips. Thus, by \eqref{torusvolume}, $Q({n}{a}\Z^2, r)$ is just the area of the fundamental parallelogram ${n}{a}(I\times I)$ minus the strips on top and bottom, as in the following figure.

\begin{center} \begin{tikzpicture}

\coordinate (Origin) at (0,0);
\coordinate (XAxisMin) at (-3,0);
\coordinate (XAxisMax) at (5,0);
\coordinate (YAxisMin) at (0,0);
\coordinate (YAxisMax) at (0,5);
\draw [thin, gray,-latex] (XAxisMin) -- (XAxisMax);% Draw x axis
\draw [thin, gray,-latex] (YAxisMin) -- (YAxisMax);% Draw y axis
\foreach \x in {-3,...,4}{
\foreach \y in {0,...,2}{
\pgfmathsetmacro{\xval}{.8*\x+.5*\y}
\pgfmathsetmacro{\yval}{2*\y}
\draw (\xval-.45,\yval-.45) rectangle (\xval+.45,\yval+.45);

}
}

\pgftransformcm{.8}{0}{.5}{2}{\pgfpoint{0cm}{0cm}}
% This is actually the transformation matrix entries that
% gives the slanted unit vectors. You might check it on
% MATLAB etc. . I got it by guessing.

\foreach \x in {-3,-2,...,4}{% Two indices running over each
\foreach \y in {0,...,2}{% node on the grid we have drawn
\node[draw,circle,inner sep=1pt,fill] at (\x,\y) {};

\draw (0,0) rectangle (1,1);

\draw [fill=red] (0,.225) rectangle (1,1-.225);

% Places a dot at those points
}
}

\end{tikzpicture} \end{center}

This smaller parallelogram has area ${\alpha}({\alpha}^{-1}-2r)$, provided it is nonempty, i.e.\ provided $2r\leq {\alpha}^{-1}$. {Thus from \equ{estimate}, if $\Lambda={\kappa}{n}{a}\Z^2$, where $a = \diag(\alpha, \alpha^{-1})$, we have
$$   Q(\Lambda, r) \leq Q(\kappa^{-1}\Lambda, r/\sqrt{2}) =  Q(na\Z^2, r/\sqrt{2}). $$
Then if ${\sqrt{2}r > {\alpha}}$ (so that \equ{2r} holds for $na\Z^2$, after adjusting $r$ as above), we have}
%\footnote{\color{red}This ensures that \equ{2r} holds for $\kappa^{-1}\Lambda$, after adjusting $r$ by \equ{estimate}. 
%\comm{I am not sure what we are saying here, need to reword:  if $\Lambda={\kappa}{n}{a}\Z^2$, then $\kappa^{-1}\Lambda  = {n}{a}\Z^2$, with the same $\alpha$ as before... -Dmitry}
%\comm{I hope the stuff in red clarifies -Nick}
\begin{equation}\label{parallelogram}
\begin{aligned}
{\alpha}({\alpha}^{-1}-2\sqrt{2}r)\leq Q(\Lambda, r) \leq {\alpha}({\alpha}^{-1}-\sqrt{2}r) \qquad  \text{ if }&\sqrt{2}r\leq {\alpha}^{-1};\\
Q(\Lambda,r)=0  \qquad \text{ if }&\sqrt{2}r\geq {\alpha}^{-1}.
\end{aligned}
\end{equation} 
%\comm{I don't understand, what happens in the case $\sqrt2r< {\alpha}$?} {\color{green}I think the lower bound is still fine, but the upper bound requires $\sqrt2r\geq \alpha$ so that theres no room between the boxes in the horizontal direction (after rotation).  Hope that makes sense.}

We now identify $X_2$ with ${\SL_2(\Z)}\backslash T^1(\mathbb{H}^2)$ via ${g}\Z^2\mapsto {g}^{-1}(i,i)$ (here, the matrix ${g}^{-1}$ acts on $(i,i)$ as a fractional-linear  transformation). Recall that $${F}:=T^1\{|z|\geq 1,\, |\re(z)|\leq 1/2\}$$ is a fundamental domain for the action of $\SL_2(\mathbb{Z})$ on $T^1(\mathbb{H}^2).$ Under the correspondence $ {g}^{-1}(i,i)\mapsto {g}\Z^2$, a point $(z,\theta) \in {F}$ maps to a lattice $\Lambda = {\kappa}{n}{a}\Z^2$ with $\sqrt{3}/2\leq \im(z) = \alpha^{-2}$. % {\color{red}  
Thus if  {\eq{newbound}{\sqrt{3}r^2  > 1} (which  ensures that the condition $\sqrt2r > \alpha$ for (\ref{parallelogram}) is met)} %$r$ is large\footnote{\color{red}Specifically if $\sqrt{3}r^2 \geq 1$.  This ensures that the condition $\sqrt2r\geq \alpha$ for (\ref{parallelogram}) is met.}, 
then the estimates \eqref{parallelogram} hold with $\im(z) = \alpha^{-2}$ for any lattice $\Lambda_{(z,\theta)}$, $(z,\theta)\in {F}$.
%} 
Writing $y= \im(z)$, the estimates become

\begin{equation}\label{estimate2}
\begin{aligned}
1-\frac{2\sqrt{2}r}{\sqrt{y}}\leq Q(\Lambda_{(z,\theta)}, r) \leq 1-\frac{\sqrt{2}r}{\sqrt{y}} \qquad  \text{ if }&\sqrt{2}r\leq \sqrt{y};\\
Q(\Lambda_{(z,\theta)},r)=0  \qquad \text{ if }&\sqrt{2}r\geq \sqrt{y}.
\end{aligned}
\end{equation} 

%\eq{estimate2}{1-\frac{2\sqrt{2}r}{\sqrt{y}}\leq Q(\Lambda_{(z,\theta)}, r) \leq 1-\frac{\sqrt{2}r}{\sqrt{y}} \hspace{10mm} (\sqrt{2}r\leq \sqrt{y})} $$Q(\Lambda_{(z,\theta)},r)=0 \hspace{10mm} (\sqrt{2}r\geq \sqrt{y}).$$

\noindent Since the Haar measure on ${X_2}$ corresponds to the hyperbolic measure $\frac{1}{y^2} \,dxdyd\theta$ on ${F}$, we have
$$\int_{X_2'}Q(\Lambda, r)\, d\mu'(\Lambda) = \int_{F} Q(\Lambda_{(x+iy,\theta)}, r)\cdot \frac{1}{y^2} \,dxdyd\theta.$$

%{\color{red}
Finally, since $r$ is large\footnote{{Specifically we need $\sqrt{2}r\geq 1$, which is already covered by \equ{newbound}.}},
%}the bound in the footnote), 
$Q(\Lambda_{(z,\theta)},r)$ vanishes in the region of $F$ between the line $y = 2r^2$ and the arc of the unit circle, permitting us to integrate over an unbounded rectangular region. The estimates \eqref{estimate2} give
$$2\pi \int_{8r^2} ^\infty\left(1-\frac{2\sqrt{2}r}{\sqrt{y}}\right)\frac{dy}{y^2} \leq \int_{{X_2}} Q(\Lambda, r)\,d\mu(\Lambda) \leq 2\pi \int_{2r^2}^\infty\left(1-\frac{\sqrt{2}r}{\sqrt{y}}\right)\frac{dy}{y^2},$$
where the $2\pi$ comes from integrating a constant function over the $\theta$ factor. % {\color{red}
Computing these integrals gives $$\frac{\pi}{12r^2}%\cdot r^{-2}
\leq {\widehat{\mu}}(S_r) \leq \frac{\pi}{3r^2}%\cdot r^{-2} 
\hspace{3mm} whenever \hspace{3mm}  r \geq 3^{-1/4}.$$ This proves \equ{ddlwiths}.
%}
%
% \comm{where $c =\pi/6$, $C = \pi/3$, am I right? I think it does not hurt to write down explicit estimates in this case. Also, what are restrictions on $r$? the proof can't possibly hold for all $r > 0$, because you want the left hand side of \equ{ddlwiths} to be $\le 1$...}
% {\color{green} Right, since we are integrating over $F$, which is bounded away from the $x$ axis, $r$ (the lower limit of integration in the y-variable) can't be too close to zero in this argument.}
 %
%$$cr^{-2}\leq \mu_{X}(S(r))\leq C r^{-2},$$ which is the same as $$ce^{-2r} \leq \Phi_\Delta(r) \leq Ce^{-2r}.$$\hfill $\square$
\end{proof}

\ignore{We will now prove {Theorem \ref{thmDL}} for arbitary $k$. Our upper bound is a special case of a result of Jayadev Athreya [A, Theorem 1].

\begin{prop}[Athreya]\label{Athreya} For measurable $A\subset \mathbb{R}^k$,

$$\mu_{X_k}\{\Lambda \in X: \Lambda \cap A = \varnothing \} \leq \frac{1}{1+\lambda (A)}$$

\noindent where $\lambda$ is Lebesgue measure. $\square$

\end{prop}

\noindent Taking $A:= B(0,r)$ gives $\Phi_\Delta(r)\leq 2e^{-kr}$. \comm{I am not sure it is correct, shouldn't $r$ be replaced by $e^r$?} We will obtain a lower bound in a way similar to the proof of the two-dimensional case.

\begin{prop}\label{lowerbound} There exists $c>0$ such that $ce^{-kr}\leq \Phi_\Delta(r)$ for large $r$. \comm{Need to mention that it would imply the same for all $r \ge 0$, or else choose a large $z_0$ in  Theorem \ref{thmDL}.} \end{prop}
\noindent }

%First let us 

\section{{Completion of the} proof of Theorem \ref{thmDL}}\label{deltadl}

\noindent We now set up the proof of the general case {($k \ge 2$)} with some notation and remarks on Siegel sets. Then the proof will be given following two lemmas generalizing some statements from the proof of the two-dimensional case.

%\comm{My changes stop here.}

%We retain the notation from the proof of the two-dimensional case:
%$$\mathbb{T}^k \hookrightarrow X \xrightarrow{~\pi~} {X} \hskip10mm S(r) := \{ \Lambda \in X: \Lambda \cap B(0,r)=\varnothing\}$$
%$$\mu_\Lambda := \text{normalized Haar measure on } \R^k/\Lambda  \hskip8mm    Q(\Lambda, r):= \mu_{\Lambda}(S(r)\cap \pi^{-1}\Lambda)$$

%\noindent Here, as before, we make the identification $ \R^k/\Lambda \rightarrow \pi^{-1}(\Lambda), [\vv]\mapsto \Lambda -\vv$. 
%Also a
As before, we wish to write $Q(\Lambda, r)$ introduced in \equ{fubini} in terms of the coordinates of the Iwasawa decomposition of a representative  ${g}\in {G}%SL_k(\mathbb{R})
$  for $\Lambda={g}\mathbb{Z}^k$. We will assume ${g}$ lies in a subset of a particular Siegel set. Specifically, for elements of ${G}
%\SL_k{(\R)}
$ of the form
\eq{aandn}{{n} = \begin{bmatrix}
1 & \nu_{1,1} & \nu_{1,2} & \cdots & \nu_{1, k-1} \\
0 & 1 & \nu_{2,1} & \cdots & \nu_{2,k-2} \\
0 & 0 & 1 & \ddots & \vdots \\

\vdots & \vdots & \ddots & \ddots & \nu_{k-1,1} \\

0 & 0 & \cdots & 0 & 1 \\

\end{bmatrix}, \hspace{4mm} {a}= \begin{bmatrix}
a_1 & 0 & 0 & \cdots & 0 \\
0 & a_2 & 0& \cdots & 0 \\
0 & 0 & a_3 & \ddots & \vdots \\

\vdots & \vdots & \ddots & \ddots & 0 \\

0 & 0 & \cdots & 0 & a_k \\

\end{bmatrix},}

\noindent and for $d, e\in \mathbb{R}$, $c \in \mathbb{R}_+$, define $$\ca_c := \{{a} \in \ca: a_{j+1}\geq ca_j>0 \hspace{2mm} (j=1,..., k-1)\},$$ $$ N_{e,d}:= \{ {n} \in  N: e\leq \nu_{i, j} \leq d \hspace{2mm} (1\leq i, j \leq k-1)\}.$$ Also write ${K}$ for $\SO{(k)}$. It is known that $ K  A_{1/2} N_{-1,0}$ is a ``coarse fundamental domain" for {$\Gamma_k$ in $G_k$ 
%$\SL_k(\mathbb{R})/\SL_k(\mathbb{Z})$,  
(see}\ \cite[\S 19.4(ii), following Remark 7.3.4]{Mo}\footnote{Our definition is that of \cite{Mo} post-composed with $g\mapsto g^{-1}$, since our action is on the right.}). {That is}, $ K  A_{1/2} N_{-1,0}$ contains a fundamental domain for the right-action of {$\Gamma_k$ on $G_k$},
%$\SL_k(\mathbb{Z})$ on $\SL_k(\mathbb{R})$, 
and it is covered by finitely many {$\Gamma_k$}-translates
%$\SL_k(\mathbb{Z})$ 
of that domain.
Therefore $ K  A_1 N_{-1,0}$ is contained in a coarse fundamental domain, and since we are interested in a lower bound for $\int_{{X}} Q(\Lambda, r) \,d\mu(\Lambda)$, it will suffice to bound the integral \begin{equation}\label{integral}\int_{ K  A_1 N_{-1,0}}Q({g}\mathbb{Z}^k, r) \,d{\mu_{G}(g)}\end{equation} 
%-- $d{g}$ a Haar measure for $\SL_k{(\R)}$. 
%Since $\ca$ normalizes $ N$, we may write $\Lambda = {\kappa}{n}{a}\mathbb{Z}^k$, and the following lemmas will allow us to disregard the ${\kappa}{n}$ factor. 
from below.

For the purpose of the lower bound it will suffice to restrict ourselves to %this 
{the} subset of $ K  A_1 N_{-1,0}$ {with $a$ satisfying
\eq{order}{0<a_1\leq a_2\leq  \dots \leq a_{k-1} < 2r\leq a_k;}
as we will show, the integral over this set contains the highest order term of \eqref{integral} as a function of $r$.}

\begin{lem}\label{parallelipiped} Suppose  {$a$ and $n$ are as in \equ{aandn}, and assume that $a$ %{a}= \diag(a_1,...,a_k)$ {
satisfies \equ{order}}.
%, and ${n}$ is upper triangular with ones on the diagonal. 
Then
\begin{equation}\label{Q} Q({n}{a}\mathbb{Z}^k, r) = 1-2ra_1...a_{k-1}. \end{equation}
\end{lem}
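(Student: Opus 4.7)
The plan is to describe $\bigcup_{v\in na\Z^k} B(v,r)$ explicitly from the special form of $n$ and $a$, and then compute the complementary volume directly. Let $v_1,\ldots,v_k$ denote the columns of $na$; these form a basis of $na\Z^k$. Because $n$ is upper-triangular unipotent, the $k$-th coordinate of $v_j$ vanishes for $j<k$ and equals $a_k$ for $j=k$. Consequently every point of $na\Z^k$ has $k$-th coordinate an integer multiple of $a_k$, so the lattice decomposes into ``horizontal layers'' $L_m:=\{v\in na\Z^k:v_k=ma_k\}$, $m\in\Z$. The hypothesis $2r\le a_k$ then ensures that any two points drawn from distinct layers are at sup-norm distance $\geq 2r$, so the balls $B(v,r)$ coming from different layers are pairwise disjoint.

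The key covering claim to establish next is that, for each $m$, the projections of the points of $L_m$ to the first $k-1$ coordinates give a set whose sup-norm $r$-neighborhood is all of $\R^{k-1}$. I would prove this by exploiting the triangular structure of $n$: given a target $(x_1,\ldots,x_{k-1})$, first use $a_{k-1}<2r$ to pick $c_{k-1}\in\Z$ so that the $(k-1)$-th coordinate of $\sum_{i<k} c_i v_i$ lies within $r$ of $x_{k-1}$; with $c_{k-1}$ fixed, use $a_{k-2}<2r$ to pick $c_{k-2}$ satisfying the analogous condition at level $k-2$, and so on down to $c_1$. Triangularity guarantees that at each stage $c_j$ is free to control coordinate $j$ regardless of the already-chosen $c_{j+1},\ldots,c_{k-1}$. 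Combining this covering with the layer separation yields
\[
\bigcup_{v\in na\Z^k} B(v,r) \;=\; \R^{k-1}\times\bigcup_{m\in\Z}(ma_k-r,\,ma_k+r).
\]

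Finally I would compute $Q(na\Z^k,r)$ by parameterizing the fundamental parallelepiped as $\{\sum_{i=1}^k t_i v_i : t_i\in[0,1)\}$ and pushing Lebesgue measure forward to the unit cube. Since $v_j$ has vanishing $k$-th coordinate for $j<k$, a point $\sum t_i v_i$ has $k$-th coordinate equal to $t_k a_k$, and so the complement of the above union intersects the parallelepiped in precisely $\{t_k\in[r/a_k,\,1-r/a_k]\}$, whose normalized Haar measure is $1-2r/a_k$. Using $\det(na)=a_1\cdots a_k=1$ to substitute $1/a_k=a_1\cdots a_{k-1}$ gives $Q(na\Z^k,r)=1-2r\,a_1\cdots a_{k-1}$, as claimed.

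The main obstacle is the covering claim within a single layer; once the upper-triangular form of $n$ is used to peel off coordinates from the top down, it reduces to the elementary fact that intervals of length $2r$ centered at points of $a_j\Z$ cover $\R$ whenever $a_j<2r$. Everything else is bookkeeping with the fundamental parallelepiped.
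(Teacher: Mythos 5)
Your proof is correct and follows the same route as the paper: reduce to the description of $\bigcup_{\vv\in na\Z^k}B(\vv,r)$ as $\R^{k-1}\times\bigcup_{\ell}(\ell a_k-r,\ell a_k+r)$, then compute the leftover volume in a fundamental parallelepiped using $\det(na)=1$. The paper simply asserts the covering identity after invoking \equ{order}, whereas you prove it by the layer decomposition and the top-down peeling of coordinates exploiting the unipotent upper-triangular form of $n$ and the inequalities $a_j<2r$ for $j<k$; this fills in a step the paper leaves implicit but is not a different argument.
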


\begin{proof} The proof follows that of the two-dimensional case. Write $\Lambda = {n}{a} \mathbb{Z}^k$ and let \linebreak  $\rho_\Lambda: \mathbb{R}^k \rightarrow \mathbb{R}^k/\Lambda$ be the projection. %As in the proof {for $k=2$}, %of the two-dimensional case, 
Using \eqref{torusvolume}, one can write
\eq{Q2}{
%\begin{aligned}
Q(\Lambda, r) %&= %\mu_{\Lambda}
%{\lambda}\big(\rho_\Lambda \{ \vv\in \mathbb{R}^k: \Lambda - \vv \in S(r) \}\big) \\
%&
=
%\mu_{\Lambda}
{\lambda} \Big(\rho_\Lambda \big(\mathbb{R}^k {\ssm} \bigcup_{\vv \in \Lambda} B(\vv, r)\big)\Big)= \lambda \Big ({n}{a}I^k{\ssm}\bigcup_{\vv \in \Lambda} B(\vv, r)\Big) ,%\end{aligned}
}
where $I^k=[0,1]\times ...\times [0,1]$, and $\lambda$, {as before, stands for both the normalized volume on $\pi^{-1}(\Lambda)$ and} Lebesgue measure on $\mathbb{R}^k$. %The condition $a_1\leq a_2 \leq ... \leq 2r \leq a_k$ 
{\equ{order}} implies
$$\bigcup_{\vv\in \Lambda} B(\vv,r) = \mathbb{R}^{k-1} \times \bigcup_{{\ell}\in \mathbb{Z}} ({\ell}a_k-r, {\ell}a_k+r),$$
%(modulo some hyperplanes in case $2r=a_{k-1}$), 
so that the measure of ${n}{a}I^k {\smallsetminus} \cup_ {\vv \in \Lambda} B(\vv,r)$ is the measure of a parallelepiped of dimensions $a_1, a_2, ..., a_{k-1}$ and $ a_k- 2r$, precisely as in the two-dimensional case. In fact the figure used in the proof of the two-dimensional case is still illustrative: just replace the squares with hypercubes, let the $y$ axis stand for the $a_k$ axis, and let the $x$ axis stand for the hyperplane $a_k=0$. This yields \eqref{Q}. \end{proof}

The next lemma will allow us to disregard the factor ${K}$ when estimating the integral \eqref{integral}.

\begin{lem}\label{K} For ${\kappa}\in K%\SO_k
$ and $\Lambda \in {X}$, $$ Q(\Lambda, k^{1/2}r) \leq Q({\kappa}\Lambda, r)\leq Q(\Lambda, k^{-1/2}r).$$\end{lem}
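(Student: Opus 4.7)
My plan is to prove this by a change of variables argument, reducing everything to the elementary fact that rotations distort the supremum norm by at most $\sqrt{k}$.

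First, the key metric observation: for any $\kappa \in K = \SO(k)$ and $\x\in\R^k$, the standard comparison $\|\x\|_\infty \le \|\x\|_2 \le \sqrt{k}\,\|\x\|_\infty$ together with rotation-invariance of the Euclidean norm yields
\[
k^{-1/2}\|\x\|_\infty \;\le\; \|\kappa\x\|_\infty \;\le\; k^{1/2}\|\x\|_\infty.
\]
Translating this to the sup-norm balls $B(0,r)$ used to define $\Delta$, one obtains the sandwich
\[
B(0,k^{-1/2}r) \;\subset\; \kappa^{-1}B(0,r) \;\subset\; B(0,k^{1/2}r).
\]

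Next I will rewrite $Q(\kappa\Lambda,r)$ using the description from \eqref{torusvolume}:
\[
Q(\kappa\Lambda,r) \;=\; \lambda\bigl(\{[\vv]\in \R^k/\kappa\Lambda : \kappa\Lambda \cap B(\vv,r) = \varnothing\}\bigr).
\]
Making the substitution $\vv = \kappa\vw$ (which identifies $\R^k/\kappa\Lambda$ with $\R^k/\Lambda$ and preserves Lebesgue measure since $\kappa$ is orthogonal), and using that $\kappa^{-1}B(\kappa\vw,r) = \vw + \kappa^{-1}B(0,r)$, one has
\[
Q(\kappa\Lambda,r) \;=\; \lambda\bigl(\{[\vw]\in\R^k/\Lambda : \Lambda\cap(\vw+\kappa^{-1}B(0,r)) = \varnothing\}\bigr).
\]

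Finally, applying the sandwich inclusions to the set $\vw + \kappa^{-1}B(0,r)$ compared against $B(\vw, k^{\pm 1/2}r)$ gives the reverse inclusions between the three corresponding sets of $[\vw]$: if $\Lambda\cap B(\vw, k^{1/2}r)=\varnothing$ then $\Lambda\cap(\vw+\kappa^{-1}B(0,r))=\varnothing$, and if $\Lambda\cap(\vw+\kappa^{-1}B(0,r))=\varnothing$ then $\Lambda\cap B(\vw, k^{-1/2}r)=\varnothing$. Taking $\lambda$-measure of these set inclusions yields
\[
Q(\Lambda, k^{1/2}r) \;\le\; Q(\kappa\Lambda, r) \;\le\; Q(\Lambda, k^{-1/2}r),
\]
as desired. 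There is no real obstacle here; the only thing to be careful about is keeping the direction of the inclusions straight when passing from sets of ``empty-ball'' centers to their Lebesgue measures.
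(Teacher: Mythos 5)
Your proof is correct and follows essentially the same route as the paper's: both reduce to the sandwich $B(0,k^{-1/2}r)\subset\kappa^{-1}B(0,r)\subset B(0,k^{1/2}r)$, pull the rotation through by a measure-preserving change of variables, and read off the inequalities from the resulting set inclusions. The only cosmetic difference is that the paper phrases the change of variables via a fundamental parallelepiped for $\Lambda$ while you work directly on $\R^k/\Lambda$ with the substitution $\vv=\kappa\vw$; the substance is identical.
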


\begin{proof} If ${P}\subset \mathbb{R}^k$ is a fundamental parallelipiped for the action of $\Lambda$ on $\R^k$, \equ{Q2} gives $$Q({\kappa}\Lambda, r)= \lambda\big({\kappa}{P}{\ssm}\cup_{\vv\in \Lambda}\{B(0,r)+{\kappa}\vv\}\big)= \lambda\big({P}{\ssm}\cup_{\vv\in \Lambda}\{{\kappa}^{-1}B(0,r)+\vv\}\big).$$ But $$B(0, rk^{-1/2})\subset {\kappa}^{-1}B(0,r)\subset B(0, rk^{1/2}),$$ so the result follows from another application of \equ{Q2}. \end{proof}

{Now we are ready to write down the}
\begin{proof}[{Proof of \equ{ddlwiths} for $k>2$}] 
%\noindent \textit{Proof of Proposition \ref{lowerbound}}: 
Let $da, dn, d\kappa$ denote Haar measures on $\ca$, $ N$, and $K$.
%}=\SO_k$. 
Define $$\eta:  A \rightarrow \mathbb{R},\ \ \ {a}=\diag(a_1,...a_k) \mapsto \Pi_{i<j}\frac{a_i}{a_j}.$$ Then the Iwasawa decomposition identifies {$\mu_{G}$}
%the Haar measure on {$G'$}
%$\SL_k(\mathbb{R})$ 
with the product measure $\eta({{a}})\,d\kappa \hspace{1mm} da\hspace{1mm} dn$  \linebreak (cf.~\cite[V.2.4]{Bekka-Mayer}).  Recall that we aim to bound the integral \eqref{integral}  from below. {Let us write $n^a = ana^{-1}$ for $n \in N$, $a \in A$.} By decomposing {$\mu_{G}$}
%the Haar measure on $G'$ 
as above and restricting the domain of integration, we have

{\begin{equation*}
\begin{aligned}
\int_{ K  A_1 N_{-1,0}}Q({g}\mathbb{Z}^k, r) \,d{\mu_{G}(g)}  &= \int_{ K  A_1 N_{-1,0}}Q(\kappa a n\mathbb{Z}^k, r) \,d\kappa\, da\, dn  \\ &= \int_{ K  A_1 N_{-1,0}}Q(\kappa n^a a\mathbb{Z}^k, r) \,d\kappa\,da\,dn \geq    \\ &=\int_ K \int_{ N_{-1,0}} \int_{\{{a}\in  A_1: a_{k-1}\leq 2r\sqrt{k} \leq a_k\}} Q({\kappa}{n^a}{a}\Z^k, r)  \eta({a})\, da \,dn\, d\kappa. \end{aligned}\end{equation*}}
By Lemma \ref{K}, this latter integral is not smaller than $$\int_ K \int_{ N_{-1,0}} \int_{\{{a}\in  A_1: a_{k-1}\leq  2r\sqrt{k} \leq a_k\}} Q({n^a}{a}\Z^k, k^{1/2}r)  \eta({a})\, da \,dn\, d\kappa,$$ 
and by Lemma \ref{parallelipiped} this is the same as $$\int_ K \int_{ N_{-1,0}} \int_{\{{a}\in  A_1: a_{k-1}\leq 2r\sqrt{k} \leq a_k\}}\left(1-2rk^{1/2}a_1...a_{k-1}\right)\eta({a})\, da \,dn\, d\kappa.$$
Since this integrand depends only on ${a}$, and the other factors have finite measure, it suffices to consider $$\int_{\{{a}\in  A_1: a_{k-1}\leq 2r\sqrt{k} \leq a_k\}}\left(1-2rk^{1/2}a_1...a_{k-1}\right)\eta({a})\, da. $$ 

Finally we identify $da$ with Lebesgue measure (up to a constant) on $\mathbb{R}^{k-1}$ via $$\diag(a_1,...a_k)\mapsto \big(\log (a_1), \log (a_2),..., \log (a_{k-1})\big),$$ see \cite[V.2.3]{Bekka-Mayer}\footnote{Our identification is theirs composed with a linear isomorphism of $\mathbb{R}^{k-1}$.}. We are therefore left with the integral $$\int_{b_1\leq b_2\leq ... \leq b_{k-1} \leq  \log(2r\sqrt{k}) \leq -\Sigma_{i=1}^{k-1}b_i}\left(1-2rk^{1/2}\exp[\Sigma_{i=1}^{k-1}b_i]\right)\exp[\Sigma_{i<j}(b_i-b_j)] \,d\lambda,$$ where the $b_k$'s occurring in the exponent of the second factor of the integrand must be understood to stand for $-\Sigma_{i=1}^{k-1}b_i$.

  Now the challenge is not the integrand (which consists  of nice exponential functions) but the domain of integration. Thankfully we only have to integrate over a piece of it, since we are interested in a lower bound. The piece we will consider is the following set: \eq{S}{ \left\{(b_1,...b_{k-1}): b_i\leq b_{i+1}\leq \frac{-\log(2r\sqrt{k})}{k-1} ~~ (1\leq i\leq k-2)~~ \right\}.} This set is clearly contained in the domain of integration above. Reordering the variables $x_i:= b_{k-i}$, and using the identity $\sum_{i<j}x_i-x_j= \sum_{i=1}^{k-1} 2ix_i$, we can compute the integral of $Q({a}\mathbb{Z}^k, \sqrt{k}r)$ over \equ{S} as an iterated integral: \begin{equation}\label{int} \int_{-\infty}^{\frac{-\log (2r\sqrt{k})}{k-1}}\int_{x_{k-1}}^{\frac{-\log (2r\sqrt{k})}{k-1}}
  %\int_{x_{k-2}}^{\frac{-\log (2r\sqrt{k})}{k-1}} 
  \cdots \int_{x_{2}}^{\frac{-\log (2r\sqrt{k})}{k-1}} \left(e^{\sum_{i=1}^{k-1} 2ix_i}-2re^{\sum_{i=1}^{k-1} (2i+1)x_i}\right) \,dx_1\,dx_2\cdots dx_{k-1}.\end{equation}  It is easily seen by induction that for $2\leq {\ell}\leq k-1$, $$\int_{x_{\ell}}^{\frac{-\log (2r\sqrt{k} )}{k-1}}\int_{x_{{\ell}-1}}^{\frac{-\log (2r\sqrt{k})}{k-1}} \cdots\int_{x_{2}}^{\frac{-\log (2r\sqrt{k})}{k-1}} \left(e^{\sum_{i=1}^{k-1} 2ix_i}-2re^{\sum_{i=1}^{k-1} (2i+1)x_i}\right) \, dx_1\,dx_2 \cdots dx_{{\ell}-1}$$ is a sum of terms of the form $$c(2r\sqrt{k})^{-m/(k-1)}e^{\sum_{i={\ell}}^{k-1} p_ix_i}$$ where $c\ {> 0}$, $p_i$ are positive integers, and $m+\sum_{i={\ell}}^{k-1} p_i= k(k-1)$. Indeed
$$\int_{x_{{\ell}+1}}^{-\log(2r\sqrt{k})/(k-1)}c(2r\sqrt{k})^{-m/(k-1)}e^{\sum_{i={\ell}}^{k-1}p_ix_i}\,dx_{\ell}=$$

$$ \frac{c}{p_{\ell}}(2r\sqrt{k})^{\frac{-(m+p_{\ell})}{k-1}}\exp\left[\sum_{i={\ell}+1}^{k-1}p_ix_i\right]-\frac{c}{p_{\ell}}(2r\sqrt{k})^\frac{-m}{{k-1}}\exp\left[(p_{\ell}+p_{{\ell}+1})x_{{\ell}+1}+\sum_{i={{\ell}+2}}^{k-1} p_ix_i\right],$$
so that we have only to notice that $$(m+p_{\ell})+\sum_{i={\ell}+1}^{k-1}p_i= m+[(p_{\ell}+p_{{\ell}+1})+\sum_{i={{\ell}+2}}^{k-1}p_i]=m+\sum_{i={\ell}}^{k-1}p_i =k(k-1)$$ from the induction hypothesis. Thus $\eqref{int} $ is a sum of terms of the form
\begin{equation*}
\begin{aligned}
\int_{-\infty}^{\frac{-\log(2r\sqrt{k})}{k-1}} c(2r\sqrt{k})^{-m/(k-1)}e^{p_{k-1}x_{k-1}} \,dx_{k-1} &= \frac{c}{p_{k-1}}(2r\sqrt{k})^{\frac{-(m+p_{k-1})}{k-1}} \\ & =\frac{c}{p_{k-1}}(2r\sqrt{k})^{\frac{-k(k-1)}{k-1}} =\frac{c}{p_{k-1}}(2r\sqrt{k})^{-k}, \end{aligned}\end{equation*}
where we have used $m+p_{k-1}=k(k-1)$. Since the integral is positive, the sum of the coefficients must be positive, and the integral grows no more slowly than some multiple of $r^{-k}$. %\hfill $\square$
\end{proof}

\section{{Concluding remarks and open questions}}
\subsection{{The homogeneous problem}}
%\noindent 
%\color{red}
Here we return to the homogeneous case and discuss the approach to Question \ref{dirichlet question} suggested by the foregoing argument.  Recall that $X=X_k = \SL_k(\R)/\SL_k(\Z)$ is the space of unimodular lattices in $\R^k$.  Define $$\Delta_0:X_k \rightarrow \R,\ \  \Lambda \mapsto \log\inf_{\vv\in \Lambda \ssm 0} \|\vv\|,$$ and for $A \in M_{m,n}$, define 
$$\Lambda_A:=  \left( \begin{array}{cc} I_m & A \\
0 & I_n \\ \end{array} \right) \mathbb{Z}^{m+n}{\in X_{{k}}},$$
{where $k = m+n$.}
If we restrict the flow $g_t$ to $X_k$, it is not difficult to show\footnote{See \cite[Proposition 4.5]{KW}, though notice {that the function used there differs from $\Delta_0$ by a minus sign.}
%the different definition of $\Delta$.
} the following homogeneous version of Lemma \ref{inhomogeneousshrinkingtarget}:

\begin{prop}
Fix positive integers $m, n$, and let $\psi: [t_0, \infty)\rightarrow (0,1)$ be continuous and non-increasing. Let $z=z_\psi$ be as in Lemma \ref{dani1}. Then $A\in D_{m,n}(\psi)$ if and only if $$\Delta_0 ({g}_s\Lambda_A)<  z_\psi(s)$$ for all sufficiently large $s$. \end{prop}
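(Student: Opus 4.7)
The plan is to transcribe the argument of Lemma \ref{inhomogeneousshrinkingtarget} to the homogeneous setting. First, I would record one preliminary observation that is special to the homogeneous problem: since $\psi$ takes values in $(0,1)$, any nontrivial integer solution $(\vp,\vq)$ of the system in \equ{dtpsi} must have $\vq\neq 0$; otherwise $\|\vp\|^m < \psi(T) < 1$ would force $\vp = 0$ as well, reducing to the trivial solution. Combined with the fact that the map $(\vp,\vq) \mapsto \left(\begin{smallmatrix} A\vq-\vp \\ \vq\end{smallmatrix}\right)$ sends $\Z^{m+n}$ bijectively onto $\Lambda_A$ and carries $(0,0)$ to $0$, this shows that ``nontrivial integer solution of \equ{dtpsi}'' is exactly ``nonzero vector in $\Lambda_A$ satisfying the corresponding size constraints.'' This is the only point at which the argument differs in substance from the inhomogeneous case.

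Next, by property (1) of Lemma \ref{dani1}, the map $t \mapsto e^{t+nz(t)}$ is a continuous, strictly increasing, unbounded function on $[t_0,\infty)$. Therefore solvability of \equ{dtpsi} for all sufficiently large $T$ is equivalent to its solvability for all sufficiently large $t$, with $T$ set equal to $e^{t+nz(t)}$. Substituting this value of $T$ into \equ{dtpsi} and using property (3) of Lemma \ref{dani1}, namely $\psi(e^{t+nz(t)}) = e^{-t+mz(t)}$, the two inequalities in \equ{dtpsi} become
\[
e^{t/m}\|A\vq - \vp\| < e^{z(t)}, \qquad e^{-t/n}\|\vq\| < e^{z(t)}.
\]
By the definition of $g_t$ in \equ{diagflow}, these two inequalities together say precisely that the vector $g_t\left(\begin{smallmatrix} A\vq-\vp\\ \vq\end{smallmatrix}\right) \in g_t \Lambda_A$ has supremum norm less than $e^{z(t)}$.

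Conjoining this with the observation from the first paragraph, $A \in D_{m,n}(\psi)$ if and only if for all sufficiently large $t$ the lattice $g_t\Lambda_A$ contains a nonzero vector of norm strictly less than $e^{z_\psi(t)}$, which is exactly the statement $\Delta_0(g_t\Lambda_A) < z_\psi(t)$. I do not anticipate any real obstacle: the argument is a direct transcription of the inhomogeneous case, and the only delicate point is the equivalence between ``nontrivial integer solution'' and ``nonzero lattice vector,'' which is automatic from the hypothesis $\psi < 1$.
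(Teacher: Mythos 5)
Your argument is correct and is exactly the transcription of the proof of Lemma \ref{inhomogeneousshrinkingtarget} that the paper has in mind (the paper does not spell this out, deferring instead to \cite[Proposition 4.5]{KW}). You have also correctly isolated the one point where the homogeneous case genuinely differs: the hypothesis $\psi<1$ ensures that a nontrivial solution of \equ{dtpsi} automatically has $\vq\neq 0$, so that nontrivial solutions correspond precisely to nonzero vectors of $\Lambda_A$, which is exactly the restriction built into the definition of $\Delta_0$ as an infimum over $\Lambda\ssm\{0\}$.
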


This {way Question  \ref{dirichlet question} reduces to}
%suggests 
a shrinking target problem {for the flow $(X,g_t)$}, %whose flow is $g_t$ and 
where the
targets are %, as before, 
super-level sets $\Delta_0^{-1}\big([z,\infty)\big)$.  But the family of super-level sets {of $\Delta_0$} differs in important ways from the family of super-level sets {of $\Delta$}.  In particular, by Minkowski's Theorem, $\Delta_0^{-1}[z,\infty)$ is empty for $z> 0$.   {Hence the problem reduces to  the case}
where the values $z_\psi(t)$ accumulate at $0$, so that the targets shrink  to the set $\Delta_0^{-1}(0)$.  The {latter} set %$\Delta_0^{-1}(0)$ 
is a union of finitely many compact submanifolds of $X$ whose structure is explicitly described by the Haj\'os-Minkowski Theorem (see \cite[\S XI.1.3]{CasselsGN} or \cite[Theorem 2.3]{Shah}). {In particular, the function $\Delta_0$} is not {DL}, and %so the main t
{Theorem \ref{thm4.3}}
%of \cite{KMapp} 
is not applicable.   %Indeed, %since $\mu(\Delta_0^{-1}[z,\infty)) = 0$ if and only if $z\geq 0$, the nontrivial case occurs 
{Other approaches to shrinking target problems on homogeneous spaces \cite{Kelmer, KY, KZ, Mau}
% seems promising; however the results of \cite{KY} are 
also do not seem to be directly applicable.}
% either, since the sets $\Delta_0^{-1}[z,\infty)$ are not rotation-invariant.} 
%shrinking target phenomena are more subtle when the targets shrink to a compact set. {Note however} 

{On the other hand, the   one-dimensional case ($m = n = 1$) has been completely settled in \cite{KW}. In particular, the following zero-one law has been established:
\begin{thm}[{\cite[Theorem 1.8]{KW}}] \label{Main Theorem} Let $\psi: [t_0,\infty) \rightarrow \R_+$ be non-increasing, and suppose the function $t\mapsto t\psi(t)$ is non-decreasing {and \eq{lessthan1}{t\psi(t) < 1\quad\text{for all }t\ge {t_0}.}}  Then if \eq{condition}{
%\int_{{t_0}}^\infty \frac{-\log\big(1-t\psi(t)\big)\big(1-t\psi(t)\big)}{t} \,dt 
{\sum_i\frac{-\log\big(1-i\psi(i)\big)\big(1-i\psi(i)\big)}{i}}
= \infty \hspace{3mm}(\text{resp. } <\infty),}
then the Lebesgue measure of $D_{1,1}(\psi)$ (resp.\  of $D_{1,1}(\psi)^c$) is zero.\end{thm}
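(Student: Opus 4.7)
The plan is to reduce the theorem to a dynamical Borel--Cantelli statement for the Gauss map, translating the Dirichlet-improvability property of $\alpha$ into a condition on its continued fraction expansion.

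First, I would use the Dani-type correspondence in the proposition above: $\alpha \in D_{1,1}(\psi)$ if and only if $\Delta_0(g_s\Lambda_\alpha) < z_\psi(s)$ for all sufficiently large $s$. For $m = n = 1$, this condition is explicitly computable from the continued fraction expansion $\alpha = [a_0; a_1, a_2, \dots]$: with convergents $p_n/q_n$ and tails $\beta_{n+1} = [0; a_{n+2}, a_{n+3}, \dots]$, one has the classical identity $|q_n\alpha - p_n| = 1/(q_{n+1} + q_n\beta_{n+1})$. Since the convergents realize all best approximations, $\alpha \in D_{1,1}(\psi)$ if and only if for every sufficiently large $T$, writing $q_n \leq T < q_{n+1}$, one has $q_{n+1} + q_n\beta_{n+1} > 1/\psi(T)$. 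Since $\psi$ is non-increasing, this is equivalent to saying that the ``bad event''
\[ E_n := \{q_{n+1} + q_n\beta_{n+1} \leq 1/\psi(q_{n+1}-1)\} \]
occurs for only finitely many $n$. The hypothesis $t\psi(t) < 1$ guarantees that $E_n$ is non-trivial, i.e., not excluded a priori by Dirichlet's theorem.

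Second, with respect to the Gauss measure $d\mu_G = \frac{1}{\log 2}\frac{dx}{1+x}$ --- equivalent to Lebesgue, hence preserving null/conull --- I would estimate the conditional probabilities of $E_n$. Conditioning on $a_1,\dots,a_n$ fixes $q_n$ and $q_{n-1}$, and $E_n$ becomes an inequality on $\xi_n := a_{n+1} + \beta_{n+1}$, whose distribution is determined by the Gauss--Kuzmin density. Using the hypothesis that $t\psi(t)$ is non-decreasing, a computation should yield the two-sided bound
\[ \Pr(E_n \mid a_1,\dots,a_n) \asymp (1 - q_n\psi(q_n))\bigl(-\log(1 - q_n\psi(q_n))\bigr), \]
with absolute implied constants. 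The Khinchin--L\'evy theorem asserts $\log q_n / n \to C > 0$ almost surely; consequently the sequence $\{q_n\}$ has asymptotic density $\asymp 1/i$ inside the integers, and so
\[ \sum_n \Pr(E_n) \asymp \sum_i \frac{(1 - i\psi(i))\bigl(-\log(1-i\psi(i))\bigr)}{i}, \]
matching the series in \equ{condition}.

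Third, from this summability dichotomy I would deduce the measure-theoretic dichotomy. The convergence half follows directly from the classical Borel--Cantelli Lemma. For the divergence half, one needs the events $\{E_n\}$ to be approximately independent; this is supplied by the exponential mixing of the Gauss map on H\"older observables, which enables a quantitative Borel--Cantelli lemma in the spirit of Sprind\v{z}uk/Khintchine to conclude that $\Pr(\limsup E_n) = 1$ when the sum diverges.

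The main obstacle is the quantitative Borel--Cantelli step. The events $E_n$ depend measurably on the entire tail of the continued fraction expansion through $\beta_{n+1}$, and pairs $(E_n, E_m)$ carry genuine correlations coming from the shared history $(a_1,\dots,a_{\min(n,m)})$. Controlling these correlations uniformly as $q_n\psi(q_n)$ traverses $(0,1)$ requires care. A secondary technical challenge is to verify that the $\asymp$ estimate for $\Pr(E_n \mid a_1,\dots,a_n)$ holds uniformly across \emph{both} regimes of $q_n\psi(q_n)$ --- the ``Khintchine regime'' $q_n\psi(q_n)$ near $0$, where $-\log(1-q_n\psi(q_n)) \approx q_n\psi(q_n)$ and the bound reduces to a classical Khintchine-type expression, and the ``near-Dirichlet regime'' $q_n\psi(q_n)$ near $1$, where the factor $(1-q_n\psi(q_n))$ is essential --- since these two regimes contribute differently to the series.
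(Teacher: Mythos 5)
This theorem is not proved in the present paper; it is quoted from \cite{KW}, and the only indication given here of the argument is that it ``is based on the observation that the condition $\alpha\in D_{1,1}(\psi)$ can be explicitly described in terms of the continued fraction expansion of $\alpha$.'' Your plan --- translating to a statement about $(a_n)$ and $\beta_{n+1}$, then running a Borel--Cantelli argument for the Gauss map --- is consistent with that remark in outline, and the initial reduction (identifying $\alpha\in D_{1,1}(\psi)$ with the condition that the bad event fails for all large $n$, via best approximations and $|q_n\alpha-p_n|=1/(q_{n+1}+q_n\beta_{n+1})$) is essentially correct modulo off-by-one bookkeeping.

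However, the proposal has a concrete gap in the probability estimate, which is more than a technicality. Writing $\theta(t):=1-t\psi(t)$ (non-increasing, in $(0,1)$ under the hypotheses), the event $E_n$ rewrites, upon dividing by $q_{n+1}$, as
$$\frac{q_n}{q_{n+1}}\,\beta_{n+1}\;\le\;\frac{\theta(q_{n+1})}{1-\theta(q_{n+1})},$$
which involves $\theta(q_{n+1})$, not $\theta(q_n)$. Since $q_{n+1}=a_{n+1}q_n+q_{n-1}$ depends on the digit $a_{n+1}$ being averaged over, and $\theta(q_{n+1})$ can be much smaller than $\theta(q_n)$ when $a_{n+1}$ is large, your claimed bound $\Pr(E_n\mid a_1,\dots,a_n)\asymp\theta(q_n)(-\log\theta(q_n))$ does not follow; the monotonicity of $t\psi(t)$ gives only the one-sided inequality $\theta(q_{n+1})\le\theta(q_n)$, so the matching lower bound needs a genuine argument (and the behaviour of $\theta$ between $q_n$ and $q_{n+1}$ must enter). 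Beyond that, the passage from $\sum_n\Pr(E_n)$ to $\sum_i\theta(i)(-\log\theta(i))/i$ via ``asymptotic density $\asymp 1/i$ of $\{q_n\}$'' is not rigorous as stated: $q_n$ is random, the summand is not monotone in $t$, and Khinchin--L\'evy gives only a.s.\ growth of $\log q_n/n$ without the uniform control needed to compare the two sums. Finally, your acknowledged obstacle --- quasi-independence of the $E_n$ for the divergence case --- is compounded by the fact that each $E_n$ depends on the entire initial segment $(a_1,\dots,a_n)$ through $q_n,q_{n-1}$, not just on a bounded digit window, so mixing of the Gauss map cannot be applied off the shelf. As written, the sketch does not assemble into a proof; each of these three steps requires substantial additional work.
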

The proof is based on the observation that the  condition $\alpha \in D_{1,1}(\psi)$ can be explicitly described in terms of the continued fraction expansion of $\alpha$. However, this phenomenon is inherently one-dimensional, and new ideas are needed  to settle the general case}.

\subsection{{\hd}} {A sequel \cite{HKWW} to the paper \cite{KW} computes the \hd\ of limsup sets $D_{1,1}(\psi)^c$, and, more generally, establishes zero-infinity laws for the Hausdorff measure of those sets. For example, it is proved there that 
$$
\dim \big(D(\psi)^c\big)=\frac{2}{2+\tau}\    \ {\text{when}}\ \psi(t)=\frac{1-at^{-\tau}}{t}  \ (a>0, \tau>0).
$$
One can ask similar questions for higher-dimensional versions, both in homogeneous and inhomogeneous settings. Even the $m = n = 1$ case of the inhomogeneous problem is open.
}

\subsection{{Singly vs.\ doubly metric problems}} {The main result of the present paper computes Lebesgue measure of the set $\admn(\psi)\subset M_{m,n}\times \mathbb{R}^m$. %which is the set of pairs $(A,\vb)$ satisfying certain properties. 
As often happens in inhomogeneous \di\ problems, one can  fix either $A$ or $\vb$ and ask for  the Lebesgue (or Hausdorff) measure of the corresponding slices of $\admn(\psi)$. It seems plausible that the convergence/divergence of the same series \equ{psisum} is responsible for a full/zero measure dichotomy for slices $$\{A\in M_{m,n}: (A,\vb) \in \admn(\psi)\}$$ for any fixed $\vb\notin\Z^m$. On the other hand, the Lebesgue measure of the set $$\{\vb\in \R^m: (A,\vb) \in \admn(\psi)\}$$ for a fixed $A\in M_{m,n}$ seems to depend heavily on \di\ properties of $A$. For example, if $A$ has rational entries, then  $(A,\vb)$ is not in  $\admn(\psi)$ whenever $\vb\notin \Q^m$ and $\psi(T) \to 0$ as $T\to\infty$. And on  the other end of the approximation spectrum, if $A$ is badly approximable  it is easy to see that there exists $C > 0$ such that for all $\vb\in\R^m$, $(A,\vb)$ belongs to the (null) set $\admn(C\psi_1)$. Indeed, by the classical Dani Correspondence, $A$ is badly approximable  if and only if the trajectory $\{g_t\Lambda_A : t > 0\}$ is bounded in $X_k$, which is the case if and only if $\{g_t\Lambda_{A,\vb} : t > 0\}$ is bounded in $\widehat{X}_k$ for any $\vb\in\R^m$. Thus the claim follows in view of Lemma \ref{corr_c/t}.
It would be interesting to describe, for a given arbitrary non-increasing function $\psi$, explicit \di\ conditions on $A\in M_{m,n}$ guaranteeing that  $(A,\vb) \in \admn(\psi)$ for all (or almost all) $\vb\in\R^m$.}

\subsection{{Eventually always hitting}} {Finally, let us connect our  results on improving the inhomogeneous Dirichlet Theorem with a shrinking target property introduced recently by Kelmer \cite{Kelmer}. 
We start by setting some notation. Let $\alpha$ be a measure-preserving $\Z^n$-action  on a probability space $(Y,\nu)$. 
For any $N \in {\N}$ denote  $$D_N:= \{\vq\in\Z^n: \|\vq\| \le N\}$$ (here, as before,  $\|\cdot\|$ stands for the supremum norm). Then 
given a {nested} family
$\mathcal{B} = \{B_N: N\in \N\}$ 
%representing a 
%{\sl shrinking target} in
of subsets of $Y$,
%(that is, $B_{N+1}\subset B_N$ and $\nu(\cap_N B_N) = 0$), 
let us say that the  $\alpha$-orbit of a point
$x
\in Y$
{\sl eventually always hits} $\mathcal{B}$  if $\alpha(D_N)x\cap B_N \ne\varnothing$
for all sufficiently large
$N\,{\in\N}$. 
Following \cite{Kelmer}, denote by
$\mathcal{A}^\alpha_{\bf ah}(\mathcal{B})$ the set of points of $Y$ with
$\alpha$-orbits eventually always hitting $\mathcal{B}$. This is a liminf set with a rather complicated structure. In  \cite{Kelmer} %and \cite{KY} 
sufficient conditions for sets $\mathcal{A}^\alpha_{\bf ah}(\mathcal{B})$ to be of full measure were found for unipotent and diagonalizable actions $\alpha$ on hyperbolic manifolds. Namely it was shown\footnote{{Note that Kelmer considered the eventually always hitting property for forward orbits, that is, with sets $D^+_N:= \{\vq\in\Z^n: q_i \ge 0, \|\vq\| \le N\}$ in place of $D_N$.}} (see \cite[Theorem 22 and Proposition 24]{Kelmer})  that for  rotation-invariant monotonically shrinking families  $\mathcal{B}$,
% of shrinking targets,  
$\nu\big(\mathcal{A}^\alpha_{\bf ah}(\mathcal{B})\big) = 1$ if the series
\eq{dubissum}{
\sum_j\frac1{2^{nj}\nu(B_{2^j})}}
converges. 
See also \cite{KY} for some extensions to actions on  \hs s of semisimple Lie groups. However, to the best of  the authors' knowledge, there  are no nontrivial examples of measure-preserving systems for which necessary and sufficient conditions  for  sets $\mathcal{A}^\alpha_{\bf ah}(\mathcal{B})$ to be of full measure exist in the literature.}

{Now, given \amr, take $Y = \T^m$ with normalized Lebesgue measure $\nu$ and consider the $\Z^n$-action \eq{aalpha}{\x\mapsto \alpha(\vq)\x := \x + A\vq \mod \Z^m} on $Y$ (generated by $n$ independent rotations of $\T^m$ by the column vectors of $A$). Also fix $\y\in Y$ and a {non-increasing} 
sequence $\{r(N){: N\in \N}\}$ of positive numbers, and consider the family $\mathcal{B}$ of   open balls \eq{target}{B_N := \{\x\in \T^m: \|\x - \y\| < r(N)\}.}
Then {it is easy to see that}  %the  $\alpha$-orbit of 
%$\x
%\in \T^m$
%eventually always hits $\mathcal{B}$ 
$\x\in \mathcal{A}^\alpha_{\bf ah}(\mathcal{B})$ if and only if for all sufficiently large $N\,{\in\N}$ there exist $\vq\in\Z^n$ and $\vp\in\Z^m$ such that 
\begin{equation}\label{eah}\|\vq\| \,{< N + 1}\qquad\text{ and }\qquad \|\x + A\vq - \vp - \y\| < r(N).\end{equation}
Here and hereafter $\alpha$ and $A$ are related via \equ{aalpha}. A connection to the improvement of inhomogeneous \dt\ is now straightforward. Indeed, from Theorem \ref{mainthm}  one can derive the following
\begin{cor}\label{coreah} {Fix $\y\in\T^m$ and} let $\mathcal{B} = \{B_N: N\in \N\}$ be as in \equ{target}, where $\{r(N): N\in\N\}$ is a non-increasing 
sequence of positive numbers. Then for Lebesgue-a.e.\ \amr\ the set $\mathcal{A}^\alpha_{\bf ah}(\mathcal{B})$ has zero (resp.\ full) measure provided the sum \equ{dubissum} diverges (resp\  converges).
\end{cor}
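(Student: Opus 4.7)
The plan is to recognize eventually always hitting as a disguised $\psi$-Dirichlet condition and then deduce the corollary from Theorem \ref{mainthm} via a Fubini argument. First I would set $\vb := \x - \y$ and rewrite \equ{eah}: since $\vq \in \Z^n$, the strict inequality $\|\vq\| < N+1$ says $\|\vq\| \le N$, and so \equ{eah} becomes the statement that for all sufficiently large $N \in \N$ there exist $\vp \in \Z^m$ and $\vq \in \Z^n$ with $\|\vq\|^n \le N^n$ and $\|A\vq + \vb - \vp\|^m < r(N)^m$. I would then define a non-increasing $\psi : [1,\infty) \to \R_+$ by declaring $\psi(T) = r(N)^m$ for $T \in \bigl((N-1)^n, N^n\bigr]$, so that $\psi(N^n) = r(N)^m$. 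Invoking the paper's remark that membership in $\admn(\psi)$ depends only on integer values of $T$, one verifies that this condition is equivalent to $(A, \vb) \in \admn(\psi)$, i.e.\ $\x \in \mathcal{A}^\alpha_{\bf ah}(\mathcal{B})$ precisely when $(A, \x - \y) \in \admn(\psi)$.

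Next I would check that the two series \equ{psisum} and \equ{dubissum} have the same convergence behavior. Once $r(N)$ is small enough for $B_N$ to be an honest box in $\T^m$ one has $\nu(B_N) = (2r(N))^m$, so \equ{dubissum} is comparable to $\sum_j 1/(2^{nj} r(2^j)^m)$. On the Theorem \ref{mainthm} side, grouping $\sum_T 1/(T^2 \psi(T))$ into blocks $T \in ((N-1)^n, N^n]$ (each containing $\asymp N^{n-1}$ integers, with $T^{-2} \asymp N^{-2n}$) yields
\[
\sum_T \frac{1}{T^2 \psi(T)} \;\asymp\; \sum_N \frac{1}{N^{n+1} r(N)^m}.
\]
A further dyadic decomposition $N \in [2^j, 2^{j+1})$, using the monotonicity of $r$ to sandwich the summand between $1/(2^{(j+1)(n+1)} r(2^j)^m)$ and $1/(2^{j(n+1)} r(2^{j+1})^m)$, collapses this to $\sum_j 1/(2^{nj} r(2^j)^m)$ up to multiplicative constants. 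Hence \equ{psisum} and \equ{dubissum} converge or diverge together.

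Finally I would apply Theorem \ref{mainthm}: it says that $\admn(\psi)$ has zero (resp.\ full) Lebesgue measure in $M_{m,n} \times \R^m$ when \equ{psisum}, and hence \equ{dubissum}, diverges (resp.\ converges). Since $\admn(\psi)$ is invariant under $\vb \mapsto \vb + \vp_0$ for $\vp_0 \in \Z^m$ (any such shift is absorbed into $\vp$), the statement descends to $M_{m,n} \times \T^m$. Fubini then gives, for Lebesgue-a.e.\ $A$, that the slice $\{\vb \in \T^m : (A,\vb) \in \admn(\psi)\}$ has zero (resp.\ full) $\nu$-measure, and via the measure-preserving change of variables $\vb = \x - \y$ on $\T^m$ this is exactly the measure of $\mathcal{A}^\alpha_{\bf ah}(\mathcal{B})$.

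The main nuisance I anticipate is keeping the boundary book-keeping clean: in the first step one must be careful with strict-vs.-non-strict inequalities near $T = N^n$ when establishing the equivalence, and in the second step the $B_N$ may fail to be a box for small $N$. Neither issue is serious because only the tail behavior of the series matters, but they are what keeps the proof from being a one-line reduction. The substantive content of the corollary is simply the observation that the eventually always hitting property for the translation action \equ{aalpha} coincides with being inhomogeneously $\psi$-Dirichlet for a $\psi$ built from $r$.
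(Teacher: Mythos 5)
Your overall strategy is correct and matches the paper's in spirit: recognize eventually always hitting as inhomogeneous $\psi$-Dirichlet, show the two series are comparable, and finish with the Fubini slicing argument. The series comparison and the Fubini step are fine. However, your choice of $\psi$ contains an off-by-one error that breaks the claimed equivalence.

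Concretely, you set $\psi(T) = r(N)^m$ for $T \in \bigl((N-1)^n, N^n\bigr]$, so $\psi(N^n) = r(N)^m$. Now evaluate the $\admn(\psi)$ condition at $T = N^n$: it demands $\|\vq\|^n < N^n$, that is $\|\vq\| \le N-1$, together with $\|A\vq+\vb-\vp\|^m < r(N)^m$. This certainly implies the hitting condition at $N$ (which only asks $\|\vq\| \le N$). But the converse fails: the hitting condition at $N$ allows $\|\vq\| = N$, and the hitting condition at $N-1$ only gives the weaker bound $\|A\vq+\vb-\vp\|^m < r(N-1)^m \ge r(N)^m$. So with your $\psi$ you get only the one-way implication $(A,\vb) \in \admn(\psi) \Rightarrow \x \in \mathcal{A}^\alpha_{\bf ah}(\mathcal{B})$, which is enough for the convergence half of the corollary but not for the divergence half (where you need the contrapositive in the other direction). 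The paper avoids this by taking $\psi(T) := r(T^{1/n}-1)^m$; note the shift, so that at $T = (N+1)^n$ one has $\psi(T) = r(N)^m$ and $\|\vq\|^n < T$ iff $\|\vq\| \le N$, matching the hitting condition at $N$ exactly. The discretized analogue of your $\psi$ that works is $\psi(T) = r(N-1)^m$ for $T \in \bigl((N-1)^n, N^n\bigr]$. Once that correction is made, the rest of your argument (dyadic comparison of the two series, Fubini on slices, the translation $\vb = \x - \y$) is sound and coincides with the paper's computation.
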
}
\begin{proof}{
{Extend $r(\cdot)$ to a non-increasing continuous function on $\R_+$ in an arbitrary way (for example, piecewise-linearly). Then, similarly to the observation made after \equ{dtpsi}, one can notice that $\x\in \mathcal{A}^\alpha_{\bf ah}(\mathcal{B})$ if and only if the system \eqref{eah} is solvable in integers $\vp,\vq$ for all sufficiently large $N\in\R_+$. The latter happens if and only if}
%Suppose that \eqref{eah} holds. Then for all sufficiently large $N$ the system
%$${\|A \vq+(\x-\y)-\vp\|^m< r(N)^m\hspace{10mm}\|\vq\|^n< {N^n+1}}
%$$
%is solvable in integers $\vp,\vq$; hence 
the pair $(A,\x-\y)$ belongs to $\admn(\psi)$, where $$\psi(T) := r (T^{1/n}-1)^m.$$ 
In view of  Theorem \ref{mainthm}, the divergence of the sum \begin{equation*}\label{computation}\begin{split}\sum_j \frac{1}{\psi(j)j^2} &={\sum_j \frac{1}{r(j^{1/n}-1)^mj^2} \asymp\int\frac{dx}{r(x^{1/n}-1)^mx^2}  \asymp \int\frac{(y+1)^{n-1}\,dy}{r(y)^m(y+1)^{2n}}} \\ & {\asymp}    \int\frac{dy}{r(y)^my^{n+1}}  \asymp \int\frac{2^z\,dz}{r(2^z)^m2^{z(n+1)}}  \asymp \sum_j \frac{1}{r(2^j)^m2^{nj}}\asymp \sum_j\frac1{2^{nj}\nu(B_{2^j})}\end{split}\end{equation*}  implies that  $\admn(\psi)$ has measure zero. Hence for a.e.\  $A$  the set  $\mathcal{A}^\alpha_{\bf ah}(\mathcal{B})$ is null. {Similarly, the convergence of \equ{dubissum} implies that $\admn(\psi)$ is conull. Thus}}
%{Similarly one can see that if  \eqref{eah} does not hold, then for arbitrary large $N$ the system
%$${\|A \vq+(\x-\y)-\vp\|^m< r(N)^m\hspace{10mm}\|\vq\|^n< {N^n}}
%$$
%has no integer solutions. Hence $(A,\x-\y)\in\admn(\psi)^c$, where $\psi(T) := r\big(T^{1/n}\big)^m$. It remains to perform a computation similar to \eqref{computation} and use the convergence case of Theorem \ref{mainthm} to conclude that 
{for Lebesgue-generic  $A$  the set  $\mathcal{A}^\alpha_{\bf ah}(\mathcal{B})$ has full measure.} 
 \end{proof}

\ignore{
\section{Proof of Theorem \ref{mainthm}}\label{mainproof}

\subsection{Summability Conditions}

\noindent For this section we fix positive integers $k= m+n$. The next two propositions give the relationship between the summability condition of Theorem \ref{mainthm} and the summability of the measures of the super-level sets corresponding to $\psi$ via the Dani correspondence.  We follow \S 8 of \cite{KM}.

\begin{lem}\label{dani} Let $\psi : [{T}_0, \infty) \rightarrow \R_+, {T}_0\geq 0$ be a non-increasing continuous function, and $r= r%^{m,n}
_\psi$ the function associated to $\psi$ by Lemma \ref{dani1}. Then we have
$$\sum_{T=\lceil{T}_0\rceil}^\infty \frac{1}{{T}^2\psi({T})}  < \infty ~~ \textit{if and only if} ~~~ \sum_{{t=\lceil t_0 \rceil}}^\infty e^{-(m+n){z}({t})}  < \infty .$$
\end{lem}

\begin{proof} 

Using the monotonicity of $\psi$ and Remark \ref{quasi1}, we may replace the sums with integrals.  Define $P:= -\log\circ \psi\circ \exp: [{T}_0,\infty) \rightarrow \R$, and $\lambda(s):= {t}+n{z}({t})$. Since $\psi(e^\lambda) =e^{-P(\lambda)}$, we have

$$\int_{{T}_0}^\infty {T}^{-2} \psi({T})^{-1}\,d{T}=\int_{\log {T}_0 }^\infty \psi(e^\lambda)^{-1}e^{-\lambda}d\lambda= \int_{\log {T}_0}^\infty e^{P(\lambda)-\lambda}\,d\lambda.$$ Using $P\big(\lambda({t})\big)= {t}-m{z}({t})$, we also have $$\int_{{t_0}}^\infty e^{-(m+n){z}({t})}\,d{t} = \int_{\log{T}_0}^\infty e^{-(m+n)r(\frac{m\lambda}{m+n}+\frac{nP(\lambda)}{m+n})}d[\frac{m}{m+n}\lambda +\frac{n}{m+n}P(\lambda)]=$$

$$\frac{m}{m+n}\int_{\log{T}_0}^\infty e^{P(\lambda)-\lambda} d\lambda +\frac{n}{m+n}\int_{\log{T}_0}^\infty e^{-\lambda} e^{P(\lambda)}dP(\lambda)=$$

$$\frac{m}{m+n}\int_{\log{T}_0}^\infty e^{P(\lambda)-\lambda} d\lambda +\frac{n}{m+n}\int_{\log{T}_0}^\infty e^{P(\lambda)-\lambda} d\lambda + \frac{n}{m+n}[\lim_{\lambda\rightarrow \infty} e^{P(\lambda)-\lambda}-1]$$ where we integrated by parts in the last line.  Since all these quantities (aside from the constant $-1$) are positive, the convergence of $\int_{{t_0}}^\infty e^{-(m+n){z}({t})} \,d{t}$ implies the convergence of $\int_{\log{T}_0}^\infty e^{P(\lambda)-\lambda}d\lambda$.  Conversely, suppose $\int_{\log{T}_0}^\infty e^{P(\lambda)-\lambda}d\lambda$ converges, yet $\int_{{t_0}}^\infty e^{-(m+n){z}({t})}\,d{t}$ diverges.  Then since $u\mapsto \int_{{t_0}}^u e^{(m+n){z}({t})}\,d{t}$ is increasing in $u$, and the first two terms of the sum above converge, we must have $e^{P(\lambda)-\lambda}$ eventually increasing in $\lambda$ (recall that $\lambda(s)$ is an increasing, unbounded function of ${t}$). But this contradicts the convergence of $\int_{\log({T}_0)}^\infty e^{P(\lambda)-\lambda}d\lambda$.
\end{proof}

Let $(X_k, \mu)$ be the space of {grids} introduced in \S \ref{dyn}.

\begin{lem}\label{conditioneq}  {\color{red}Let $\psi: [{T}_0,\infty)\rightarrow \R_+$ be a continuous non-increasing function, let $C\in \R$, and let $r=r%^{m,n}
_\psi$ as in Lemma \ref{dani1}. Then $$\sum_{t={t_0}}^\infty \Phi_\Delta(r(t)+C) = \infty \iff  \sum_{T={T}_0}^\infty T^{-2}\psi({T})^{-1}=\infty$$} \end{lem}

\begin{proof}{\color{red}{Since  ${z}({t}) \to \infty$ as $t\to\infty$ and %$\Delta$ is $k$-distance like
in view of Theorem \ref{thmDL}}, the sum on the left is infinite if and only if $$\sum_{t={t_0}}^\infty e^{-(m+n)({z}({t})+C)} =\infty.$$ In case ${z}({t}) \not\to \infty$ the same is clearly true.} The proposition therefore follows from Proposition~\ref{dani}. \end{proof}

\subsection{Proof of Theorem \ref{mainthm}} {\color{red} maybe need a better subsection title}

{ I think I fixed the problem you pointed out with the following lemma.  But maybe its better to replace it with something shorter, as you said.}

{\color{red}

\begin{lem}  Let $\psi: [T_0,\infty) \rightarrow \R_+$  be continuous and non-increasing. Let $r=r%^{m,n}
_\psi$ be as in Lemma \ref{dani1} and write ${z'}(t):= \max \{r(t), 0\}$. If $\admn(\psi)$ (resp.\ $\admn(\psi)^c$) has positive measure, then $$\bigcup_{k=\lceil t_0 \rceil}^\infty \bigcap_{t=k}^\infty g_t^{-1}\big(\Delta^{-1}(-\infty, {z'}(t)+1)\big), ~~~resp.\ \bigcap_{k=\lceil t_0 \rceil}^\infty \bigcup_{t=k}^\infty g_t^{-1}\big(\Delta^{-1}[{z'}(t)-1,\infty)\big)$$ has positive measure.  \end{lem}

\begin{proof} Suppose $\admn(\psi)$ has positive measure. By Lemma \ref{inhomogeneousshrinkingtarget}, for all $(A, \mathbf{b})\in \admn(\psi)$,
$$\Delta( {{g_t}} \Lambda_{A, \mathbf{b}})< {z}({t})$$

\noindent for all sufficiently large ${t}$.   Recall the groups $H$ and $\tilde H$ from equations \equ{h} and \equ{htilde}.  As in the proof of \ref{c/t}, we may find a neighborhood of  identity $V \subset \tilde H$ such that for all ${g}\in V$ and  $(A, \mathbf{b})\in \admn(\psi)$ %and  large enough $t$,  
$$\Delta( {{ g_t}{g}}  \Lambda_{A, \mathbf{b}}) = \Delta( {{ g_t}{g}}g_t^{-1} g_t  \Lambda_{A, \mathbf{b}})< {z'}(t)+1$$ for all sufficiently large $t$. Since the product map $\tilde H \times H\to {\widehat{G}_{k}}$ is a local diffeomorphism, the image of $V\times \admn(\psi)$ is a set of positive measure contained in the $\liminf$ set of the lemma.

Suppose $\admn(\psi)^c$ has positive measure.  If $r(t) \not\to \infty$, then the $\limsup$ set clearly has positive measure.  Thus we may assume ${z'}(t) = r(t)\geq 0$ for all large enough $t$. By Lemma \ref{inhomogeneousshrinkingtarget}, for all $(A, \mathbf{b})\in \admn(\psi)^c$,
$$\Delta( {{g_t}} \Lambda_{A, \mathbf{b}})\geq {z}({t})$$

\noindent for unbounded ${t}$. We may find a neighborhood of identity $V \subset \tilde H$ such that for all ${g}\in V$ and  $(A, \mathbf{b})\in \admn(\psi)^c$ %and  large enough $t$,  
$$\Delta( {{ g_t}{g}}  \Lambda_{A, \mathbf{b}})\geq r(t)-\frac{1}{2}$$ for unbounded (possibly non-integer) values of $t$.  By Remark \ref{quasi1} we have   $$\Delta( {{ g_t}{g}}  \Lambda_{A, \mathbf{b}}) \geq r(t)-1$$ for unbounded \textit{integer} values of $t$. Since the product map $\tilde H \times H\to {\widehat{G}_{k}}$ is a local diffeomorphism, the image of $V\times \admn(\psi)^c$ is a set of positive measure contained in the $\limsup$ set of the lemma.

\end{proof}

\noindent \textit{Proof of Theorem \ref{mainthm}:} Suppose $\sum_{T= {T}_0}^\infty {T}^{-2}\psi({T})^{-1} <\infty$. By Lemma \ref{conditioneq}, $$\sum_{t={t_0}}^\infty \Phi_\Delta(r(t)-1) < \infty. $$ Note that this implies $r(t) \geq 0$ for all large $t$. By Theorem \ref{dynamicalmainthm}, almost no $\Lambda \in X_k$ has $$\Delta(g_t (\Lambda)) \geq r(t)-1$$ for infinitely many $t\in \N$.  By the previous lemma, $\admn(\psi)^c$ has zero measure.

Now suppose  $\sum_{T= {T}_0}^\infty {T}^{-2}\psi({T})^{-1} =\infty$. By Lemma \ref{conditioneq}, $$\sum_{t={t_0}}^\infty \Phi_\Delta(r(t)+1) = \infty. $$ This implies $$\sum_{t={t_0}}^\infty \Phi_\Delta({z'}(t)+1) = \infty, $$ ${z'}$ as in the previous lemma. By Theorem \ref{dynamicalmainthm}, almost every $\Lambda \in X_k$ has $$\Delta(g_t (\Lambda)) \geq {z'}(t)+1$$ for infinitely many $t\in \N$.  By the previous lemma, $\admn(\psi)$ has zero measure. \hfill $\square$}

\ignore{
\section{Distance-like functions and smooth approximations, \\  a correction to \cite{KM}}
  %{Distance-like functions and smooth approximations:  a correction to \cite{KM}}
\begin{center}
      {by D.\ Kleinbock and G.A.\ Margulis}
    \end{center}

\bigskip
The paper \cite{KM} contains quite a few  examples of DL functions on \hs s of semisimple Lie groups. The main goal of that paper was studying statistics of excursions of generic trajectories of   flows on $X$ into sets $\Delta^{-1}\big([z,\infty)\big)
%A(z)
$ for large $z$. A crucial ingredient of the argument was approximation of characteristic functions of those sets %$A(z)$ 
by smooth functions with uniformly bounded derivatives. However, as was recently observed by Dubi Kelmer and Shucheng Yu, 
the argument in the main approximation statement, namely
 \cite[Lemma  4.2]{KM}, contains a mistake. To state a corrected version below,  we 
need to weaken the regularity assumption on the smooth functions approximating the sets  $\Delta^{-1}\big([z,\infty)\big)$. Namely, for $\ell\in\Z_+$ and $C >
0$, let us say that a  nonnegative function 
$h \in C^\infty_2(X) $ is {\sl $(C,\ell)$-regular\/} if
\begin{equation}\label{reg}\tag{REG}
\|h\|_{2,\ell} \le C 
%\left(\int_X h\,d\mu\right)^{1/2}
\sqrt{\|h\|_1}%^{1/2}
\,.
\end{equation}
Note that  the argument of  \cite{KM} used a stronger condition:
\begin{equation}\label{regold}\tag{REG-old}
\|h\|_{2,\ell} \le C  \|h\|_1%\int_X h\,d\mu
\,.
\end{equation}

Here is the corrected statement:

\begin{thm}\label{lem4.2} Let  $\Delta$ be a
DL function on $X$. Then for any $\ell\in\Z_+$ there exists  $C > 0$ such
that for every $z \in\R$ one can find two  $(C,\ell)$-regular nonnegative
functions $h'$ and $h''$ on $X$ such that  
\eq{4.2}{
h' \le 1_{\Delta^{-1}([z,\infty))} \le h''\quad \text{and}\quad c%\cdot
\pd(z)
%\mu\big(A(z)\big) 
\le \|h'\|_1 \le \|h''\|_1
%\int_X h'\,d\mu\ \le \int_X h''\,d\mu\ 
\le \frac1c
%\cdot\mu\big(A(z)\big)
\pd(z),}
with $c$ as in \eqref{dl}.
\end{thm}

%In \cite{KM} the  $(C,\ell)$-regularity was phrased using \eqref{regold} in place of \eqref{reg}. Unfortunately, the proof there contained a mistake, found by caused by an incorrect use of Young's inequality. Thus the 

One of the main goals of \cite{KM} was, given a sequence $\{f_t: t\in\N\}$ of elements of $G$ and a sequence of non-negative functions $\{h_t: t\in\N\}$ on $X$ such that $$\sum_{t=1}^\infty \|h_t\|_1
%\int_X h_t\,d\mu 
= \infty,$$ compare the growth of $\sum_{t=1}^N  h_t(f_tx) $ for $\mu$-a.e.\ $x\in X$ with the growth of $\sum_{t=1}^N\|h_t\|_1
% \int_X h_t\,d\mu
$ as $N\to\infty$. Results like that usually go by the name `dynamical Borel-Cantelli lemmas', see \cite{CK}. In \cite[Proposition 4.1]{KM} such a conclusion was shown to follow from the exponential mixing of the $G$-action on $X$, the {\sl exponential divergence} of $\{f_t\}$ (see \S\ref{section}) and the  regularity assumption \eqref{regold} on functions $\{h_t\}$. 
%Here we modify the definition of the notion off  regularity introduced in \cite[\S4.1]{KM}, as follows. %with $\int_X h\,d\mu$ in place of $\sqrt{\int_X h\,d\mu}$. 
%

\smallskip
In the following theorem we weaken the regularity condition \eqref{regold} to \eqref{reg} and %, using an observation due to Shucheng Yu,  
derive  the same conclusion:
% as in  \cite[Proposition 4.1]{KM}:

\begin{thm}\label{prop4.1} 
Suppose that %$X$ is  not compact and 
the $G$-action on $X$ is exponentially mixing.
Let $\{f_{t} : {t}\in \N\}$ be a sequence of elements of $G$ satisfying \eqref{ed},    and let  $ \{h_t : {t}\in \N\}$
be a  
sequence of non-negative  $(C,\ell)$-regular functions on $X$ such that $\sup_t\|h_t\|_1 < \infty$ and $\sum_{t=1}^\infty \|h_t\|_1
%\int_X h_t\,d\mu 
= \infty$. Then 
$$
\lim_{N\to\infty}\dfrac{\sum_{t=1}^N  h_t(f_tx)}{\sum_{t=1}^N \|h_t\|_1
%\int_X h_t\,d\mu
} 
= 1 \quad\text{for
$\mu$-a.e.\ 
$x\in X$.}
$$
\end{thm}

%\begin{defn}[Sobolev norms] 

%%dist$(\cdot,\cdot)$ 
%\begin{defn}[Regularity] 

%\end{defn}
%The regularity condition used in \cite{KM} 

Then, using the above theorem in place of   \cite[Proposition 4.1]{KM} and Theorem \ref{lem4.2}
 in place of \cite[Lemma  4.2]{KM}, one easily recovers Theorem \ref{thm4.3}. %all the results of \cite{KM}. In particular, 
%Then combining 

 \smallskip

%\begin{thm}\label{thm4.3} Suppose that %$X$ is  not compact and 
%the $G$-action on $X$ is exponential mixing.
%Let $\{f_{t} : {t}\in \N\}$ be a sequence of elements of $G$ satisfying \eqref{ed}, let $\Delta$ be a DL function on $X$,   and  het 
% $\{r_{t}: {t}\in \N\}$ be
%a sequence of real numbers such that  
%$
%{ \sum_{{t}=1}^{\infty} %\mu\big(A(r_t)\big)
%\pd(r_t)} = \infty.
%$
%{\rm (1.6b)} 
%Then for some positive 
%$c \le 1$ and for  almost
%all $x\in \ggm$ one has
%$$
% c \le \liminf_{N\to\infty}\frac {\#\{ 1 \le {t}\le N\mid \Delta(f_{t}x)
%\ge r_{t}\}}  {\sum_{t=1}^{N} \pd(r_t)}
%\mu\big(A(r_{t})\big)} 
%\le \limsup_{N\to\infty}\frac {\#\{ 1 \le {t}\le N\mid \Delta(f_{t}x)
%\ge r_{t}\}}  {\sum_{t=1}^{N} \pd(r_t)}\le \frac1c\,.
%$$
%Where $c$ is the constant from {\rm (DL)}.
%\end{thm}

  For the proof, let us first state a general form of Young's inequality, whose proof we give for the sake of self-containment of the paper.  For $\psi\in L^1(G, m)$ and $h\in L^1(X, \mu)$, define $\psi\ast h$ by $$(\psi\ast h)(x) \df  \int_G \psi(g)h(g^{-1}x) \,dm(g).$$

 \begin{lem}\label{young}  Let $\psi\in L^1(G, m)$ and $h\in L^2(X, \mu)$. Then $\|\psi\ast h\|_2\leq \|\psi\|_1  \|h\|_2.$ \end{lem}

 \begin{proof} We have 
 $$|\psi\ast h| \leq \int_G|\psi(g)|^{1/2}  |h(g^{-1}x)|\cdot |\psi(g)|^{1/2} \,dm(g).$$ By  H{\"o}lde{z'}s inequality, this integral is not greater than $$\left(\int_G |\psi(g)| \,dm(g)\right)^{1/2} \cdot \left( \int_G|h(g^{-1}x)|^2   |\psi(g)| \,d m(g) \right)^{1/2}. $$ Thus we have $$|\psi\ast g(x)|^2 \leq \|\psi\|_1  \int_G |\psi(g)|\cdot  |h(g^{-1}x)|^2  \,d m(g).$$ Integrating over $X$ and using Fubini's Theorem gives  
 \begin{equation*}
 \begin{split}
 \|\psi\ast h\|_2^2&\leq \|\psi\|_1  \int_X\int_G|\psi(g)|\cdot |h(g^{-1}x)|^2\,dm(g)d\mu(x)\\ &=\|\psi\|_1\int_G|\psi(g)|\int_X|h(g^{-1}x)|^2d\mu(x)dm(g),
 \end{split}
 \end{equation*}
which, by the $G$-invariance of $\mu$, is the same as $$\|\psi\|_1\int_G|\psi(g)|\,dm(g)\int_X|h(x)|^2d\,\mu(x)= \|\psi\|_1^2 \cdot \|h\|_2^2.$$ 
%finishing the proof. 
\end{proof} 
 
  \begin{proof}[Proof of Theorem \ref{lem4.2}] We follow the %argument of \cite{KM}. 
proof of  \cite[Lemma  4.2]{KM}.
 For  $z\in\R$, let us
use the notation $$A(z) \df \Delta^{-1}\big([z,\infty)\big).$$ Then, for  $\vre  > 0$, let us
denote by $A'(z,\vre )$ the set of all points of $A(z)$ which are not
$\vre $-close to $\partial A(z)$, i.e. $$A'(z,\vre )\df \{x\in A(z) :
\text{dist}\big(x,\partial A(z)\big) \ge \vre \},$$ and  by $A''(z,\vre )$ the
$\vre $-neighborhood of  $A(z)$, namely
$$A''(z,\vre )\df \{x\in X : \text{dist}\big(x,
A(z)\big) \le \vre \}.$$ 
Choose $\delta$ and $c$ according to (DL).   Then, using the
uniform continuity of $\Delta$, find $\vre  > 0$ such that 
$$
|\Delta(x) -
\Delta(y)| < \delta\text{ whenever dist}(x,y) < \vre \,.%\tag 4.3
$$
% From the above inequality i
It %immediately 
follows that for all $z$, % one has 
$$
A(z+\delta) \subset A'(z,\vre )\subset A(z)\subset A''(z,\vre )\subset
A(z-\delta);$$ therefore  one
can apply (DL) to conclude that  
\eq{4.4}{
c\cdot\mu\big( A(z)\big)\le \mu\big( A'(z,\vre )\big) \le \mu\big(
A''(z,\vre )\big) \le \frac1c\mu\big(A(z)\big)\,.%\tag 4.4 
}

Now  take a non-negative $\psi\in C^\infty(G)$ of $L^1$-norm $1$ such that
supp$\,\psi$ belongs to the ball of radius $\vre /4$ centered in $e\in G$.
 Fix  $z\in\R$ and consider   functions $h' \df \psi* 1_{A'(z,\vre /2)}$ and $h'' \df \psi* 1_{A''(z,\vre /2)}$. Then one clearly has
$$
1_{A'(z,\vre )} \le h' \le 1_{A(z)} \le h'' \le 1_{A''(z,\vre )} ,%\tag 
$$
which, together with  \equ{4.4},   
 immediately implies \equ{4.2}.  It remains to choose $\ell\in\Z_+$ and find $C$ such that both $h'$ and $h''$ are $(C,\ell)$-regular. Take a multiindex $\alpha$ with $|\alpha| \le \ell$, and write $$\|D^\alpha h'\|_2 =  \|D^\alpha(\psi* 1_{A'(z,\vre /2)})\|_2  = \|D^\alpha(\psi)*
1_{A'(z,\vre /2)}\|_2.$$ Then, by the Young
inequality, 
$$
\|D^\alpha h'\|_2 \le \|D^\alpha(\psi)\|_1   \sqrt{\mu\big(A'(z,\vre /2)\big)}  \le
\|D^\alpha(\psi)\|_1  \sqrt {\mu\big(A(z)\big)}  \le
\|D^\alpha(\psi)\|_1\left(\frac{\|h'\|_1}{c}\right)^{1/2}  % \int_X h'\,d\mu
.
$$  
Similarly, $$\|D^\alpha h''\|_2 \le \|D^\alpha(\psi)\|_1 \sqrt{\mu\big(A''(z,\vre /2)\big)} \le \|D^\alpha(\psi)\|_1  
%\frac1{c}\mu\big(A(z)\big) 
\left(\frac{\mu\big(A(z)\big)}{c}\right)^{1/2}\le %\frac1{c}
\|D^\alpha(\psi)\|_1  %\cdot  \int_X h''\,d\mu
\left(\frac{\|h''\|_1}{c}\right)^{1/2};$$
hence, with $C = \frac1{\sqrt{c}}\sum_{\|\alpha|\le \ell}\|D^\alpha(\psi)\|_1$, both  $h'$ and  $h''$ are
$(C,\ell)$-regular, and the theorem is proven.
 \end{proof}

  \begin{proof}[Proof of Theorem \ref{prop4.1}] Denote $\int_X h_{t}\,d\mu = \|h_t\|_1$ by $a_t$. Following the argument in  \cite{KM}, our goal is to show that the sequence of functions $\{h_t\circ f_t\}$ satisfies a second-moment condition formulated by Sprind\v zuk \cite{Sp}: 
\begin{equation}\label{sp}\tag{SP}
%\exists\,C > 0\text{ such that }
\sup_{1\le M < N}\frac{\int_X\Big(\sum_{{t} = M}^{N}
h_{t}(f_tx) - \sum_{{t} = M}^{N}a_t  \Big)^2\,d\mu}{
\sum_{{t} = M}^{N}a_t} < \infty\,; 
\end{equation} 
the conclusion of the theorem will then follow in view of  \cite[Lemma  2.6]{KM}.

Take $1 \le M < N$. As in \cite[Remark  2.7]{KM}, one can rewrite the numerator as \linebreak
$\sum_{s,t=M}^N \left(\langle f_t^{-1}h_t, f_s^{-1}h_s\rangle -a_sa_t\right)$, and then estimate it using the exponential mixing of the $G$-action on $X$: 
\begin{equation*}\begin{split}
\left|\sum_{s,t=M}^N  \langle f_t^{-1}h_t, f_s^{-1}\rangle -a_sa_t\right| &\le \sum_{s,t=M}^N \left|\langle f_s f_t^{-1}h_t,  h_s\rangle -a_sa_t\right| \\
\text{(with $E,\lambda,\ell$ as in \eqref{em}) } &\le E\sum_{s,t=M}^N  {e^{ - \lambda \|f_s f_t^{-1}\|}\left\| h_t  \right\|_{2,\ell}} {\left\| h_s  \right\|_{2,\ell}}\\
\text{(by the $(C,\ell)$-regularity of $\{h_t\}$) }&\le EC^2 \sum_{s,t=M}^N   {e^{ - \lambda \|f_s f_t^{-1}\|} \sqrt{a_sa_t}}.
\end{split}\end{equation*}
Now, following an observation communicated by Shucheng Yu, split the above sum according to the comparison between $a_s$ and $a_t$:
\eq{3sums}{  \sum_{a_s = a_t}  e^{ - \lambda \|f_s f_t^{-1}\|} \sqrt{a_sa_t}   \ +   \sum_{a_s < a_t}  e^{ - \lambda \|f_s f_t^{-1}\|} \sqrt{a_sa_t} \   +     \sum_{a_s > a_t}  e^{ - \lambda \|f_s f_t^{-1}\|} \sqrt{a_sa_t},}
where the values of $s,t$ in the last three sums range between $M$ and $N$.  By symmetry, the last two sums are equal. Thus \equ{3sums} is not greater than
\begin{equation*}\begin{split}
  \sum_{a_s = a_t} e^{ - \lambda \|f_s f_t^{-1}\|} a_t  +   2\sum_{ a_s < a_t}  e^{ - \lambda \|f_s f_t^{-1}\|} a_t &\le 2  \sum_{s,t=M}^N e^{ - \lambda \|f_s f_t^{-1}\|} a_t \\
&\le 2  \sum_{t=M}^N a_t  \sum_{s=M}^Ne^{ - \lambda \|f_s f_t^{-1}\|}  
\\
&\le 2  \sum_{t=M}^N a_t \cdot \sup_{t\in\N}\sum_{s=1}^\infty e^{ - \lambda \|f_s f_t^{-1}\|} ,  .\end{split}\end{equation*}
and the proof of \eqref{sp} is finished in view of \eqref{ed}.
 \end{proof}}
}

\begin{acknowledgements}
The authors would like to thank Alexander Gorodnik, Dubi Kelmer and Shucheng Yu for helpful discussions, and the anonymous referee for useful comments.
\end{acknowledgements}

\end{document}